\documentclass[11pt]{amsart}

\usepackage[T1]{fontenc}
\usepackage{lmodern}
\usepackage{amsmath,amssymb,amsthm}
\usepackage{booktabs,tabularx}
\usepackage{placeins}
\usepackage{microtype}
\usepackage{tikz}
\usepackage[hidelinks]{hyperref}
\hypersetup{
  pdftitle={The Corners of Ramanujan: Partial- and False-Theta Companions to Third-Order Mock Theta Functions},
  pdfauthor={George E. Andrews and Mohamed El Bachraoui}
}

\newcommand{\qpoch}[3]{(#1;#2)_{#3}}
\newcommand{\qinf}[2]{(#1;#2)_{\infty}}
\newcommand{\co}{c_o}
\newcommand{\cthreeo}{c_{3,o}}
\newcommand{\ptdo}{p_{2,do}}
\newcommand{\pthdo}{p_{3,do}}
\newcommand{\fTheta}{\operatorname{f\Theta}}
\newcommand{\fE}{\operatorname{fE}}
\newcommand{\fO}{\operatorname{fO}}

\theoremstyle{plain}
\newtheorem{theorem}{Theorem}[section]
\newtheorem{proposition}[theorem]{Proposition}
\newtheorem{lemma}[theorem]{Lemma}
\newtheorem{corollary}[theorem]{Corollary}

\theoremstyle{definition}
\newtheorem{definition}[theorem]{Definition}

\theoremstyle{remark}

\numberwithin{equation}{section}

\begin{document}
\raggedbottom

\title[The Corners of Ramanujan]{The Corners of Ramanujan:\\
Partial- and False-Theta Companions to Third-Order Mock Theta Functions}
\author[G. E. Andrews]{George E. Andrews}
\address{The Pennsylvania State University, University Park, Pennsylvania 16802}
\email{andrews@math.psu.edu}

\author[M. El Bachraoui]{Mohamed El Bachraoui}
\address{Department of Mathematical Sciences, United Arab Emirates University,
P.O. Box 15551, Al Ain, United Arab Emirates}
\email{melbachraoui@uaeu.ac.ae}
\keywords{corners, plane partitions, integer partitions, ranks, mock theta functions,
partial theta functions, false theta functions, unimodal sequences, colored partitions,
overpartitions, sign-reversing involutions, Ramanujan identities, congruences,
rank differences, rank equidistribution, rank bias}
\subjclass[2020]{11P81; 11P82; 11P83; 05A17; 05A19; 33D15}
\begin{abstract}
Most of the traditional third-order mock theta functions occur naturally as
specialized values of partition-rank generating functions.  The same finite
rank kernels also occur in the ordinary and odd unimodal-rank series studied
previously in the literature.  We organize a parallel collection of corner
series by retaining these kernels and replacing their quadratic core weights
by linear peak weights.  Their root-of-unity specializations are governed by
partial and false theta identities recorded in Ramanujan's Lost Notebook; in
one exceptional case the corner companion is itself a mock theta function.
The classical rank series and Ramanujan identities are used as inputs.  Our
new results include exact two- and three-color partition identities with
sparse theta discrepancies and an oriented three-dimensional corner with a
distinct tower.  In both color cases, positive decompositions separate the
colored companion into copies of the relevant odd-corner series and a
corresponding decorated even-corner series.  Thus both sparse discrepancies
become signed differences between natural corner families.  Burson's boxed
modular involution provides a common rank-refined model for the two
corrections, whose surviving fixed points produce the exceptional quadratic
sequences.  For the side rank of the three-dimensional corners we obtain an
exact formula at $z=-1$, a strict parity bias, quartic and cubic root-of-unity
rank-difference identities, and exact equidistribution modulo $3$ on the
progression $6n+5$.  The cubic identity also gives exact congruence formulas
on the progressions $6n+2$ and $6n+3$.  We further classify the associated
binary triangular support modulo every integer and derive general congruence
families.
\end{abstract}
\maketitle

\section{Introduction}

The partition rank reveals a remarkable coherence among several of
Ramanujan's third-order mock theta functions \cite{Watson,AndrewsFour}.
Replacing the underlying partitions by corners---plane partitions having one
row and one column---leads to a closely parallel collection of generating
functions.

We organize the paper according to this parallel, in the spirit of
Ramanujan's own index to the Lost Notebook.  Analytically, the passage from a
mock-theta rank series to the corresponding corner series keeps the finite
rank kernel and replaces the quadratic power of $q$ by the linear power that
records the peak of the corner.  Thus
\[
 n^2\longmapsto n,
 \qquad
 2n(n+1)\longmapsto 2n+1.
\]
We use the descriptive phrase \emph{kernel-preserving linearization} for this
replacement.  It is an organizing principle for the families considered
here, not a claim that arbitrary analytic or modular properties are
preserved.

We have chosen the title \emph{The Corners of Ramanujan} because the analytic
identities needed for the companion formulas are dispersed throughout
Ramanujan's Lost Notebook and acquire a unified corner interpretation here.
The identities themselves are classical; their organization, the colored
interpretations, and the new rank and arithmetic consequences are the focus
of the present work.

Corners have appeared in a variety of mathematical contexts.  The underlying
stack models occur in work of Auluck \cite{Au}, Andrews \cite{A84},
Temperley \cite{T}, and Wright \cite{WrightStacks}; corners also arise as
wedge characters in the work of Datta, Gaberdiel, Li, and Peng \cite{DGLP},
and as V-partitions in Stanley's work \cite{S}.  The word ``corner'' appears
to have been first used by Kreweras \cite{K}; his paper is in French, where
the term is \emph{encoignure}, and the OEIS uses the English translation
``corner'' \cite{O}.

The ordinary corner numbers and their interpretation as unimodal sequences
are classical.  More importantly for the present paper, the two-variable
series $R_c(z;q)$ in \eqref{eq:Rc} is, up to reversal of the rank sign,
exactly the unimodal-rank generating function studied by Kim and Lovejoy
\cite{KLrank}.  They used a partial theta identity of Ramanujan to obtain rank
differences and a congruence for a rank moment, and subsequently developed
analogous identities for several special unimodal families \cite{KLspecial}.
Bridges and Bringmann studied Taylor coefficients of the same rank function
through false Jacobi forms \cite{BridgesBringmann}; Allen and Osburn obtained
two-parameter Hecke--Appell expansions and mixed false-theta representations
for generalized unimodal series \cite{AllenOsburn}; and Bringmann, Pandey,
and van Ittersum developed related partial- and false-Eisenstein-type series
and differential structures \cite{BPV}.  Burson gave a boxed $2$-modular
sign-reversing involution for one of Ramanujan's false theta identities and a
multivariate boxed $m$-modular extension \cite{Burson}.  We show that the
instances $(m,r)=(3,1),(3,2)$ and $(2,1)$ give a common involutive realization
of the rank-refined sparse corrections in the odd and three-dimensional
corner identities.
Root-of-unity specializations are a standard source of rank differences and
cyclotomic equidistribution in partition theory; see, for example,
Garvan \cite{GarvanRank} and Folsom, Males, and Rolen \cite{FMR}.  The
specializations developed here concern the different tower-decorated series
$\mathcal D(z;q)$ introduced in Section~\ref{sec:three-dimensional}.

For the odd family, Andrews's convex compositions provide the earlier
one-variable function \cite{AndrewsCC}, while Bringmann and Lovejoy later
introduced and analyzed the two-variable odd-unimodal rank series used below
\cite[Thm.~1.1]{BLodd}.  Thus neither the definitions of the ordinary and odd
unimodal-rank series nor the previously known rank identities are claimed as
new here.

The paper has three layers.  First, the known rank kernels and classical Lost
Notebook identities are assembled into a single corner/mock-theta dictionary;
this is primarily a synthesis and a change of viewpoint.  Second, we give new
colored-partition interpretations and exact two- and three-color relations
whose errors are sparse theta series.  Parallel positive decompositions
separate the two-color companion into odd corners and staircase-decorated
even corners, and the three-color companion into two copies of the restricted
odd three-dimensional corners and a corresponding decorated even-corner
family.  Both errors are therefore differences of natural corner families.
The two rank-refined correction terms are then placed in one boxed modular
framework: Burson's involutions cancel all nonfixed overpartition pairs,
leaving exactly the exceptional quadratic families.  Third, we develop new rank and arithmetic consequences: an
oriented three-dimensional corner with a distinct tower, an exceptional
parity specialization, quartic and cubic root-of-unity rank-difference
formulas, exact rank equidistribution modulo $3$ on $6n+5$, and an
all-modulus classification of the resulting binary triangular support.  This separation between recalled input, new
interpretation, and new theorem will be maintained throughout the paper.

The arithmetic result has a particularly concise local formulation.  With
$T_j=j(j+1)/2$, define
\[
 R_M=\{T_{2a+1}+T_b\pmod M:a,b\ge0\}.
\]
Theorem~\ref{thm:all-modulus-support} proves that a residue $r$ is absent from
$R_M$ precisely when there is a prime power $p^\alpha\parallel M$ with
$p\equiv3\pmod4$ for which $v_p(4r+1)$ is odd and less than $\alpha$.  This local
description turns the sparse theta discrepancies into congruence families for the
restricted three-dimensional corners and their colored companions.  A
separate cubic specialization of the side-rank series refines one of these
divisibility statements: the three rank classes are exactly equidistributed
at weights $6n+5$, while the rank difference on weights congruent to $2$
modulo $6$ is supported on a single quadratic sequence.  This yields complete
modulo $3$ and modulo $12$ dissections on the corresponding progressions.

The organization is as follows.  Section~\ref{sec:rank-functions} reviews the
two rank kernels, the relevant third-order mock theta functions, and the
partial and false theta functions.  Section~\ref{sec:dictionary} introduces
the parallel corner-rank series and displays the dictionary used in this paper.
Sections \ref{sec:ordinary} and \ref{sec:odd} treat ordinary and odd corners,
respectively; the latter also gives the first positive corner decomposition
and identifies the rank-refined sparse correction with two boxed $3$-modular
involutive series.  Section~\ref{sec:three-dimensional} introduces oriented
three-dimensional corners, consisting of one row, one column, and one tower,
gives the parallel three-color positive decomposition, and identifies the
corresponding correction with the boxed $2$-modular case.
Section~\ref{sec:arithmetic} states the support classification and the
resulting congruence consequences.  Sections~\ref{sec:proofs-ordinary},
\ref{sec:proofs-odd}, and \ref{sec:proofs-three} give, separately and in the
order of their statements, the proofs of the results in Sections
\ref{sec:ordinary}, \ref{sec:odd}, and \ref{sec:three-dimensional}.
Section~\ref{sec:proofs-arithmetic} contains the proofs of the arithmetic
results.  The final section records further directions suggested by the
construction.

\section{Rank functions, mock theta functions, and Ramanujan's identities}
\label{sec:rank-functions}

For $n\ge0$, write
\[
(a;q)_0=1,
\qquad
(a;q)_n=\prod_{j=0}^{n-1}(1-aq^j),
\qquad
(a;q)_\infty=\prod_{j=0}^{\infty}(1-aq^j),
\]
and use the usual condensed notation for several parameters.  For standard
partition and $q$-series notation, see \cite{partitions}.  Throughout,
$|q|<1$ when analytic convergence is relevant; otherwise all identities are
formal power-series identities.  When an intermediate calculation uses
$q^{1/2}$, it is understood in the formal ring $\mathbb Z[[q^{1/2}]]$; the
final identities used below lie in $\mathbb Z[[q]]$.  For a modulus $M$, a
residue is called a square modulo $M$ if it is congruent to $x^2$ for some
integer $x$; otherwise it is called a nonsquare modulo $M$.

Let $N(m,n)$ denote the number of partitions of $n$ with Dyson rank $m$
\cite{Dyson}.  The classical rank generating function is
\begin{equation}\label{eq:R-rank}
R(z;q):=\sum_{n\ge0}\sum_{m\in\mathbb Z}N(m,n)z^mq^n
=\sum_{n\ge0}\frac{q^{n^2}}
{\qpoch{zq}{q}{n}\qpoch{z^{-1}q}{q}{n}}.
\end{equation}
In particular,
\begin{equation}\label{eq:R-specializations}
R(1;q)=\frac1{(q;q)_\infty},
\qquad
R(-1;q)=f_3(q),
\qquad
R(i;q)=\phi_3(q),
\end{equation}
where
\begin{align}
f_3(q)&:=\sum_{n\ge0}\frac{q^{n^2}}{(-q;q)_n^2},
\label{eq:f3}\\
\phi_3(q)&:=\sum_{n\ge0}\frac{q^{n^2}}{(-q^2;q^2)_n}.
\label{eq:phi3}
\end{align}

A second rank-type series is
\begin{equation}\label{eq:S-rank}
S(z;q):=\sum_{n\ge0}
\frac{q^{2n^2+2n}}
{\qpoch{zq}{q^2}{n+1}\qpoch{z^{-1}q}{q^2}{n+1}}.
\end{equation}
It gives
\begin{equation}\label{eq:S-specializations}
S(1;q)=\omega_3(q),
\qquad
S(i;q)=\nu_3(q^2),
\end{equation}
with
\begin{align}
\omega_3(q)&:=\sum_{n\ge0}
\frac{q^{2n(n+1)}}{(q;q^2)_{n+1}^2},
\label{eq:omega3}\\
\nu_3(q)&:=\sum_{n\ge0}
\frac{q^{n(n+1)}}{(-q;q^2)_{n+1}}.
\label{eq:nu3}
\end{align}
We shall also use
\begin{equation}\label{eq:psi3}
\psi_3(q):=\sum_{n\ge1}\frac{q^{n^2}}{(q;q^2)_n}.
\end{equation}

The terminology surrounding unilateral theta-type series is not completely
uniform.  We shall call
\begin{equation}\label{eq:fTheta}
\fTheta(q):=\sum_{n\ge0}(-1)^nq^{n(n+1)/2}
\end{equation}
a partial theta series.  We also define
\begin{align}
\fE(q)&:=\sum_{n\ge0}q^{n(3n+1)/2}(1-q^{2n+1}),
\label{eq:fE}\\
\fO(q)&:=\sum_{n\ge0}q^{3n^2+2n}(1-q^{2n+1}).
\label{eq:fO}
\end{align}
After reindexing the second terms, these have the signed bilateral forms
\begin{align*}
\fE(q)&=\sum_{n\in\mathbb Z}\operatorname{sgn}\!\left(n+\tfrac12\right)
 q^{n(3n+1)/2},\\
\fO(q)&=\sum_{n\in\mathbb Z}\operatorname{sgn}\!\left(n+\tfrac12\right)
 q^{n(3n+2)},
\end{align*}
so we refer to them as false theta series in the standard signed-theta sense.
The subscripts suggest triangular, Euler-pentagonal, and odd-pentagonal
support.  We use ``partial and false theta'' collectively when discussing the
three series.

We shall repeatedly use the $q$-binomial theorem
\begin{equation}\label{eq:qbinomial}
\sum_{n\ge0}\frac{(a;q)_nz^n}{(q;q)_n}
=\frac{(az;q)_\infty}{(z;q)_\infty},
\end{equation}
Euler's pentagonal number theorem
\begin{equation}\label{eq:euler-pentagonal}
(q;q)_\infty=\sum_{k=-\infty}^{\infty}(-1)^kq^{k(3k+1)/2},
\end{equation}
Gauss's identity
\begin{equation}\label{eq:gauss}
G(q):=\sum_{n\ge0}q^{n(n+1)/2}
=\frac{(q^2;q^2)_\infty}{(q;q^2)_\infty},
\end{equation}
Jacobi's triple product
\begin{equation}\label{eq:jacobi-triple}
\sum_{n\in\mathbb Z}(-1)^n x^n q^{n(n-1)/2}
=(x;q)_\infty(q/x;q)_\infty(q;q)_\infty,
\end{equation}
and Heine's transformation
\begin{equation}\label{eq:heine}
\sum_{n\ge0}
\frac{(a;q)_n(b;q)_nz^n}{(q;q)_n(c;q)_n}
=
\frac{(b;q)_\infty(az;q)_\infty}{(c;q)_\infty(z;q)_\infty}
\sum_{n\ge0}
\frac{(c/b;q)_n(z;q)_nb^n}{(q;q)_n(az;q)_n}.
\end{equation}
Heine's transformation was also discovered independently by Ramanujan; see
Berndt \cite[Ch.~16, Entry~6]{B}.

We shall also use the following corollary of the Rogers--Fine identity
\cite[p.~223, Eq.~(9.1.1)]{AB1}:
\begin{equation}\label{eq:rogers-fine}
\sum_{n\ge0}(a;q)_n a^n
=
\sum_{n\ge0}(-1)^nq^{n(3n-1)/2}a^{3n}(1+aq^n).
\end{equation}
The same formula will be applied below with the base $q$ replaced by $q^4$.

Three identities from Part~I of Ramanujan's Lost Notebook are
\cite[Entries~9.3.3, 9.4.2, and 9.5.3]{AB1}
\begin{equation}\label{eq:ram-ch9-1}
\sum_{n\ge0}\frac{(q;q^2)_nq^n}{(-q;q^2)_{n+1}}
=\sum_{n\ge0}(-1)^nq^{2n(n+1)}=\fTheta(q^4),
\end{equation}
\begin{equation}\label{eq:ram-ch9-2}
\sum_{n\ge0}\frac{(-1)^nq^{n(n+1)/2}}{(-q;q)_n}
=\fE(q),
\end{equation}
and
\begin{equation}\label{eq:ram-ch9-3}
\sum_{n\ge0}\frac{q^n}{(-q;q^2)_{n+1}}
=\sum_{n\ge0}(-1)^nq^{6n^2+4n}(1+q^{4n+2})
=\fO(-q^2).
\end{equation}

We also use Burson's boxed modular refinement of
\eqref{eq:ram-ch9-1} \cite[Thm.~5.2]{Burson}.  Recall that an overpartition is
a partition in which the first occurrence of each distinct part may be
overlined \cite{CorteelLovejoy}.  For integers $m\ge2$ and $1\le r<m$, let
$\mathcal B_{m,r}(N)$ be the set of pairs $(k,\overline\pi)$ such that
$k\ge0$, $\overline\pi$ is an overpartition into parts congruent to $r$
modulo $m$ and not exceeding $mk+r$, every overlined part is strictly less
than $mk+r$, and
\[
 rk+|\overline\pi|=N.
\]
Put $\nu(\overline\pi)$ for the number of parts, counted with multiplicity,
and declare $\mathcal B_{m,r}(N)=\varnothing$ when $N<0$.  The corresponding
signed bivariate series is
\begin{align}
\mathfrak B_{m,r}(u,q)
&:=\sum_{N\ge0}\ \sum_{(k,\overline\pi)\in\mathcal B_{m,r}(N)}
 (-1)^{\nu(\overline\pi)}u^{k+\nu(\overline\pi)}q^N\notag\\
&=\sum_{k\ge0}
 \frac{(uq^r;q^m)_k u^kq^{rk}}{(-uq^r;q^m)_{k+1}}\notag\\
&=\sum_{j\ge0}(-1)^ju^{2j}q^{j(mj+2r)}.
\label{eq:burson-boxed}
\end{align}
For fixed $k$, the product has the stated overpartition interpretation: a
nonmaximal permitted part $s$ contributes
$(1-uq^s)/(1+uq^s)$, the maximal part $mk+r$ contributes
$1/(1+uq^{mk+r})$, and the boxed row contributes $u^kq^{rk}$.
Recall that an involution $\iota$ on a signed set satisfies
$\iota^2=\mathrm{id}$.  An object $X$ is called a \emph{fixed point} when
$\iota(X)=X$.  Every nonfixed object belongs to a two-cycle
$\{X,\iota(X)\}$; if $\iota$ reverses the sign, the two members of that
cycle cancel in the signed generating function.  Thus only the fixed
points contribute to the final signed sum.

Burson's boxed $m$-modular involution preserves both $N$ and
$k+\nu(\overline\pi)$ and reverses the sign on every nonfixed pair.  Its
fixed points are the pairs
\begin{equation}\label{eq:burson-fixed-pairs}
 \bigl(j,(mj+r)^j\bigr),
\end{equation}
where $(s)^j$ denotes the partition consisting of $j$ copies of the part
$s$.  These fixed points have weight $(-1)^ju^{2j}q^{j(mj+2r)}$.

The next two identities are from Part~II \cite[Ch.~6]{AB2}.  First,
\begin{align}
\sum_{n\ge0}\frac{q^n}{(-aq;q)_n(-bq;q)_n}
&=\frac{1}{(-aq;q)_\infty(-bq;q)_\infty}
\sum_{m\ge0}(-1)^{m+1}a^{-m-1}b^mq^{\binom{m+1}{2}}
\notag\\
&\quad+\left(1+\frac1a\right)
\sum_{m\ge0}\frac{(-1)^ma^{-m}b^mq^{\binom{m+1}{2}}}
{(-bq;q)_m}.
\label{eq:ram-part2-1}
\end{align}
Second,
\begin{align}
&\left(1+\frac1a\right)
\sum_{n\ge0}\frac{(-q;q)_{2n}q^{2n+1}}
{(aq;q^2)_{n+1}(q/a;q^2)_{n+1}}
\notag\\
&\quad=\frac{1}{(q;q^2)_\infty(aq;q^2)_\infty(q/a;q^2)_\infty}
\sum_{n\ge0}(-a)^nq^{\binom{n+1}{2}}
-\sum_{n\ge0}(-a)^nq^{n^2+n}.
\label{eq:ram-part2-3}
\end{align}

We shall also need the following companion identity from Part~III of
Ramanujan's Lost Notebook \cite[p.~60, Eq.~(8.25)]{AB3}:
\begin{equation}\label{eq:ram-part3-psi}
\sum_{n\ge1}\frac{q^n}{(q;q^2)_n}
=q\sum_{n\ge0}\frac{q^{2n(n+1)}}{(q;q^2)_{n+1}^2}.
\end{equation}

\section{Corner ranks and the mock-theta dictionary}
\label{sec:dictionary}

A corner is a plane partition whose Ferrers diagram has one row and one
column.  Equivalently, it consists of a central part $m$, a row partition,
and a column partition, with every row and column part at most $m$.  If the
row has $r$ parts and the column has $s$ parts, the corner rank is $r-s$.
Let $C(r,n)$ denote the number of corners of weight $n$ and corner rank $r$.
Thus
\begin{equation}\label{eq:Rc}
R_c(z;q):=\sum_{n\ge0}\sum_{r\in\mathbb Z}C(r,n)z^rq^n
=\sum_{m\ge0}\frac{q^m}
{(zq;q)_m(z^{-1}q;q)_m}.
\end{equation}
Up to reversing the sign convention for the rank, \eqref{eq:Rc} is exactly
the unimodal-rank generating function $U(z;q)$ of Kim and Lovejoy
\cite[Eq.~(2.2)]{KLrank}.  The reversal is immaterial because the series is
invariant under $z\mapsto z^{-1}$.

If every part is required to be odd, the corresponding series is
\begin{equation}\label{eq:Sc}
S_c(z;q):=\sum_{m\ge0}\frac{q^{2m+1}}
{(zq;q^2)_{m+1}(z^{-1}q;q^2)_{m+1}}.
\end{equation}
This is the odd-unimodal rank series of Bringmann and Lovejoy
\cite[Thm.~1.1]{BLodd}.  The denominators in \eqref{eq:Rc} and
\eqref{eq:Sc} are exactly those of \eqref{eq:R-rank} and \eqref{eq:S-rank};
only the quadratic core weights have been linearized.

The resulting dictionary is summarized in Table~\ref{tab:dictionary}.
The entries in the last column are developed in the following sections.

\begin{table}[t]
\centering
\caption{Third-order rank specializations and their corner companions.}
\label{tab:dictionary}
\renewcommand{\arraystretch}{1.25}
\begin{tabularx}{\textwidth}{@{}lclX@{}}
\toprule
Rank kernel & Specialization & Classical series & Corner companion \\
\midrule
$R(z;q)$ & $z=1$ & $(q;q)_\infty^{-1}$
& $R_c(1;q)$, ordinary corners \\
$R(z;q)$ & $z=-1$ & $f_3(q)$
& $R_c(-1;q)$, governed by $\fTheta$ and $\fE$ \\
$R(z;q)$ & $z=i$ & $\phi_3(q)$
& $R_c(i;q)=\fE(-q)$ \\
$S(z;q)$ & $z=1$ & $\omega_3(q)$
& $S_c(1;q)$, odd corners and a sparse $\fO(-q)$ term \\
$S(z;q)$ & $z=i$ & $\nu_3(q^2)$
& $S_c(i;q)=q\fO(-q^4)$ after rescaling \\
exceptional & --- & $\psi_3(q)$
& $C_\psi(q)=q\omega_3(q)$, itself mock \\
\bottomrule
\end{tabularx}
\end{table}

The table is a guide rather than a claim that all analytic properties of a
mock theta function survive linearization.  Its force is that one fixed
rank kernel produces both a mock-theta specialization and a corner series,
while Ramanujan's identities evaluate the latter through partial and false
theta functions.

The present dictionary does not include the remaining classical third-order
functions $\chi_3$ and $\rho_3$.  No claim of a complete dictionary for every
third-order mock theta function is therefore intended.

\section{Ordinary corner companions}
\label{sec:ordinary}

We state the results first and give their proofs together in
Section~\ref{sec:proofs-ordinary}.

Let $c(n)$ denote the number of corners of $n$.  Then
\begin{equation}\label{eq:c-gf}
C(q):=\sum_{n\ge0}c(n)q^n
=R_c(1;q)=\sum_{m\ge0}\frac{q^m}{(q;q)_m^2}.
\end{equation}
For example, $c(4)=12$; the twelve objects are
\[
4,\ 31,\ \begin{smallmatrix}3\\1\end{smallmatrix},\ 22,\
\begin{smallmatrix}2\\2\end{smallmatrix},\ 211,\
\begin{smallmatrix}2&1\\1\end{smallmatrix},\
\begin{smallmatrix}2\\1\\1\end{smallmatrix},\ 1111,\
\begin{smallmatrix}1&1&1\\1\end{smallmatrix},\
\begin{smallmatrix}1&1\\1\\1\end{smallmatrix},\
\begin{smallmatrix}1\\1\\1\\1\end{smallmatrix}.
\]
The same objects are ordinary unimodal sequences with distinguished peak.
The following is Ramanujan's partial-theta identity in the form used by
Kim and Lovejoy.

\begin{proposition}
\label{prop:C-fTheta}
We have
\begin{equation}\label{eq:C-fTheta}
C(q)=\frac{\fTheta(q)}{(q;q)_\infty^2}.
\end{equation}
\end{proposition}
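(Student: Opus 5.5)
The plan is to obtain \eqref{eq:C-fTheta} by specializing the two-parameter identity \eqref{eq:ram-part2-1} from Part~II of the Lost Notebook at $a=b=-1$. With this choice we have $(-aq;q)_n=(-bq;q)_n=(q;q)_n$. The left side of \eqref{eq:ram-part2-1} then becomes
\[
\sum_{n\ge0}\frac{q^n}{(q;q)_n^2}=C(q),
\]
by \eqref{eq:c-gf}.

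On the right side, the prefactor $(1+1/a)$ of the second sum vanishes at $a=-1$. The first term becomes
\[
\frac{1}{(q;q)_\infty^2}\sum_{m\ge0}(-1)^{m+1}(-1)^{-m-1}(-1)^m q^{\binom{m+1}{2}}
=\frac{1}{(q;q)_\infty^2}\sum_{m\ge0}(-1)^m q^{m(m+1)/2},
\]
since $(-1)^{m+1}(-1)^{-m-1}=1$. By \eqref{eq:fTheta} this is exactly $\fTheta(q)/(q;q)_\infty^2$, which is the claim.

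The only point needing care is that the specialization is legitimate, and this is the main (mild) obstacle. We must check that the second sum in \eqref{eq:ram-part2-1} stays finite at $a=b=-1$, so that multiplying it by the vanishing factor really gives $0$. At $a=b=-1$ that sum is
\[
\sum_{m\ge0}\frac{(-1)^m(-1)^{-m}(-1)^m q^{\binom{m+1}{2}}}{(q;q)_m}
=\sum_{m\ge0}\frac{(-1)^m q^{\binom{m+1}{2}}}{(q;q)_m}.
\]
This is a well-defined power series in $q$: the $m$th term has order at least $\binom{m+1}{2}$, so each coefficient is a finite sum. Moreover every coefficient of $q^N$ on both sides of \eqref{eq:ram-part2-1} is a Laurent polynomial in $a$ and $b$ with no pole at $a=b=-1$. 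Hence the identity may be evaluated coefficientwise at $a=b=-1$ as a formal power-series identity.

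If one prefers to avoid relying on \eqref{eq:ram-part2-1}, a fallback is the route of Kim and Lovejoy. Expand $1/(q;q)_m$ by the $q$-binomial theorem \eqref{eq:qbinomial} and sum over $m$ first. Then apply Heine's transformation \eqref{eq:heine} to reduce $\sum_m q^m/(q;q)_m^2$ to $(q;q)_\infty^{-2}$ times a series that telescopes to the partial theta function. The specialization argument above is, however, the main line.
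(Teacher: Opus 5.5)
Your proposal is correct and follows the paper's proof: specialize \eqref{eq:ram-part2-1} at $a=b=-1$, observe that the factor $1+1/a$ annihilates the second series, and simplify the sign to $(-1)^m$ and the products to $(q;q)_\infty^2$. Your additional check that the annihilated series is a well-defined formal power series, so that the specialization is legitimate coefficientwise, is a sound refinement that the paper leaves implicit; the Kim--Lovejoy fallback is not needed.
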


At the level of unimodal sequences, this is the specialization of the
Ramanujan partial-theta identity used by Kim and Lovejoy \cite{KLrank}.  We
record it in corner language because it is the starting point for the new
colored interpretation below.  We are not aware of a comparably direct
bijective proof of \eqref{eq:C-fTheta}.

\begin{theorem}\label{thm:ordinary-colored}
For $n\ge0$, the following quantities are equal:
\begin{enumerate}
\item $c(n)$;
\item the number of two-color partitions of $n+1$ in which the largest part
occurs exactly once, and that occurrence is red;
\item the number of two-color partitions of $n$, with parts colored red and
blue, in which the smallest positive integer absent as a blue part is odd.
\end{enumerate}
\end{theorem}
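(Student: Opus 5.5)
The plan is to prove both equalities at the level of generating functions. Item (1) is encoded by $C(q)$ in \eqref{eq:c-gf}, and Proposition~\ref{prop:C-fTheta} gives the closed form $C(q)=\fTheta(q)/(q;q)_\infty^2$. I will show that the generating function of item (2) is $qC(q)$, which gives (1)$=$(2) with no further input. I will then show that the generating function of item (3) is $\fTheta(q)/(q;q)_\infty^2$, which gives (1)$=$(3) through Proposition~\ref{prop:C-fTheta}.

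\emph{Item (2).} Classify the partitions by the value $m+1\ge1$ of the largest part. The hypothesis forces exactly one part of size $m+1$, and it is red; in particular no blue part equals $m+1$. Every other part is at most $m$ and carries either colour. For fixed $m$ the generating function is therefore
\[
\frac{q^{m+1}}{(q;q)_m^2}.
\]
Summing over $m\ge0$ gives $q\sum_{m\ge0}q^m/(q;q)_m^2=qC(q)$. Extracting the coefficient of $q^{n+1}$ yields $c(n)$. In corner language this is a direct bijection: the central part $m$ becomes the single red part $m+1$, and the row and column partitions become the red and blue parts bounded by $m$.

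\emph{Item (3).} Let $k\ge1$ be the smallest positive integer absent as a blue part. The red parts are unrestricted and contribute $1/(q;q)_\infty$. The blue parts must contain each of $1,\dots,k-1$ at least once and must avoid $k$. Removing one copy of each of $1,\dots,k-1$ leaves an arbitrary partition with no part equal to $k$, so the blue generating function is
\[
q^{k(k-1)/2}\,\frac{1-q^k}{(q;q)_\infty}.
\]
Restrict to odd $k=2j+1$ with $j\ge0$. Each term becomes $q^{j(2j+1)}(1-q^{2j+1})=q^{T_{2j}}-q^{T_{2j+1}}$. Summing over $j$ gives exactly $\sum_{n\ge0}(-1)^nq^{n(n+1)/2}=\fTheta(q)$, so the total generating function is $\fTheta(q)/(q;q)_\infty^2=C(q)$.

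The main points needing care are bookkeeping rather than genuine obstacles. In (2), "largest part occurs exactly once" must also exclude a blue copy of the maximum, and this is why the denominators run only to $m$. In (3), the case $k=1$ (no blue $1$) must be included, and it supplies the terms $1-q$. The real depth is hidden in Proposition~\ref{prop:C-fTheta}, which is taken as given.
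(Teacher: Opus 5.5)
Your proposal is correct and takes essentially the paper's approach. Item (2) classifies by the largest part to obtain $qC(q)$. Item (3) pairs the terms of $\fTheta(q)$ as $q^{T_{2j}}(1-q^{2j+1})$, reads the blue factor as ``$1,\dots,2j$ present, $2j+1$ absent,'' and uses Proposition~\ref{prop:C-fTheta} for the link to $C(q)$; the only difference is that you run this last computation from the colored partitions toward $C(q)$, whereas the paper runs it from $C(q)$ toward the partitions.
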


\begin{proposition}\label{prop:c-parity}
The number $c(n)$ is odd if and only if $n$ is a generalized pentagonal
number, that is,
\[
 n=\frac{k(3k\pm1)}2
\]
for some $k\ge0$.
\end{proposition}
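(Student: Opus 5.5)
We work modulo $2$ in $\mathbb Z[[q]]$ and start from Proposition~\ref{prop:C-fTheta}, $C(q)=\fTheta(q)/(q;q)_\infty^2$.

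First, reduce the partial theta series. Modulo $2$ the signs disappear, so
\[
\fTheta(q)\equiv\sum_{n\ge0}q^{n(n+1)/2}=G(q)\pmod 2 .
\]
Gauss's identity \eqref{eq:gauss} then gives
\[
G(q)=\frac{(q^2;q^2)_\infty}{(q;q^2)_\infty}=\frac{(q^2;q^2)_\infty^2}{(q;q)_\infty}.
\]

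Second, apply the Frobenius congruence $(q;q)_\infty^2\equiv(q^2;q^2)_\infty\pmod 2$. Every power series $F$ with integer coefficients satisfies $F(q)^2\equiv F(q^2)\pmod 2$, so also $(q^2;q^2)_\infty^2\equiv(q^4;q^4)_\infty$. Combining the steps,
\[
C(q)\equiv\frac{(q^2;q^2)_\infty^2}{(q;q)_\infty^3}
\equiv\frac{(q^2;q^2)_\infty^2}{(q^2;q^2)_\infty\,(q;q)_\infty}
=\frac{(q^2;q^2)_\infty}{(q;q)_\infty}\pmod 2 .
\]
Here we used $(q;q)_\infty^3=(q;q)_\infty^2(q;q)_\infty\equiv(q^2;q^2)_\infty(q;q)_\infty$.

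Third, finish with Euler's pentagonal theorem. One more application of the Frobenius step gives
\[
\frac{(q^2;q^2)_\infty}{(q;q)_\infty}\equiv\frac{(q;q)_\infty^2}{(q;q)_\infty}=(q;q)_\infty\pmod 2 .
\]
By \eqref{eq:euler-pentagonal},
\[
(q;q)_\infty\equiv\sum_{k\in\mathbb Z}q^{k(3k+1)/2}\pmod 2 .
\]
The exponents $k(3k+1)/2$ for $k\in\mathbb Z$ are pairwise distinct. Writing $k\ge0$ or $k=-k'$ with $k'\ge1$, they are exactly the numbers $k(3k\pm1)/2$ with $k\ge0$, each occurring once, apart from the common value $0$ at $k=0$. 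Hence the coefficient of $q^n$ in $C(q)$ is odd precisely when $n$ is a generalized pentagonal number. This is the statement of Proposition~\ref{prop:c-parity}.

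No step here is a serious obstacle. The only care needed is that division by $(q;q)_\infty$ is legitimate modulo $2$, which holds because these series have constant term $1$ and are therefore units in $\mathbb Z[[q]]$ and in $\mathbb F_2[[q]]$. We also need the distinctness of the pentagonal exponents, so that no cancellation occurs mod $2$.

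As a cross-check, item (2) of Theorem~\ref{thm:ordinary-colored} suggests a combinatorial proof. One could try to pair the two-color partitions of $n+1$ with a red, non-repeated largest part by a color-swapping involution, whose fixed points reduce to the self-conjugate count of ordinary partitions. The generating-function argument above is shorter, so that is the one I would present.
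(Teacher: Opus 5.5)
Your proof is correct, but it takes a different route from the paper's. You start from the product formula of Proposition~\ref{prop:C-fTheta}, so your computation reduces to $\fTheta(q)\equiv G(q)=(q^2;q^2)_\infty^2/(q;q)_\infty\equiv(q;q)_\infty^3\pmod 2$, the mod-$2$ shadow of Jacobi's cube identity, and hence $C(q)\equiv(q;q)_\infty$.

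The paper does not use Ramanujan's partial theta identity here. It reduces the defining series term by term, using $(q;q)_m^2\equiv(q^2;q^2)_m\pmod 2$, to get $C(q)\equiv\sum_{m\ge0}q^m/(q^2;q^2)_m$. It sums this by the $q$-binomial theorem to $1/(q;q^2)_\infty=(-q;q)_\infty\equiv(q;q)_\infty$, and then applies Euler's theorem exactly as you do.

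What each route buys:
\begin{itemize}
\item The paper's argument is more elementary and independent of Proposition~\ref{prop:C-fTheta}, so the parity statement does not rest on the Lost Notebook input.
\item Yours is a tidy manipulation of products once that proposition is available, at the cost of depending on the deeper identity.
\end{itemize}
Your remarks about units in $\mathbb F_2[[q]]$ and the distinctness of the pentagonal exponents are exactly the right points of care.

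One correction to your closing aside. Swapping the colors of the non-largest parts is the combinatorial content of the paper's step $(q;q)_m^2\equiv(q^2;q^2)_m$. Its fixed points of weight $n+1$ are a largest part $m+1$ together with a doubled partition into parts at most $m$. They are counted by the coefficient of $q^n$ in $\sum_{m\ge0}q^m/(q^2;q^2)_m=1/(q;q^2)_\infty$, that is, by partitions of $n$ into odd parts, not self-conjugate partitions. Self-conjugate partitions have generating function $(-q;q^2)_\infty\equiv1/(q;q)_\infty\pmod 2$, which is not sparse, so the aside as worded would not lead to the result.
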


\begin{corollary}\label{cor:c-residue}
Let $M\ge2$ and $0\le r<M$.  If $24r+1$ is a nonsquare modulo $M$, then
\[
 c(Mn+r)\equiv0\pmod2\qquad(n\ge0).
\]
\end{corollary}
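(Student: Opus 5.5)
The argument reduces Corollary~\ref{cor:c-residue} to Proposition~\ref{prop:c-parity}; the only work is a short congruence calculation. By Proposition~\ref{prop:c-parity}, $c(N)$ is odd exactly when $N=k(3k\pm1)/2$ for some $k\ge0$. Equivalently, $N=j(3j+1)/2$ for some $j\in\mathbb Z$, since taking $j=k$ or $j=-k$ gives both signs. For any such $N$ we have
\[
 24N+1=36j^2+12j+1=(6j+1)^2 .
\]
So whenever $c(N)$ is odd, $24N+1$ is a perfect square of an integer.

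Now fix $M\ge2$ and $0\le r<M$, and assume $24r+1$ is a nonsquare modulo $M$. Take $N=Mn+r$ with $n\ge0$. Then $24N+1=24Mn+24r+1\equiv 24r+1\pmod M$. If $c(N)$ were odd, then $24N+1=(6j+1)^2$, and reducing modulo $M$ would show that $24r+1$ is congruent to a square modulo $M$. This contradicts the hypothesis, so $c(Mn+r)$ is even for every $n\ge0$.

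No step is a genuine obstacle once Proposition~\ref{prop:c-parity} is available. The one point to check is that the condition ``$n=k(3k\pm1)/2$ with $k\ge0$'' matches the bilateral parametrization $j(3j+1)/2$, $j\in\mathbb Z$, so that the square $(6j+1)^2$ is available for every odd value of $c$. One also needs the definition of ``nonsquare modulo $M$'' from Section~\ref{sec:rank-functions}, namely not congruent to $x^2$ for any integer $x$; this is exactly what the reduction of $(6j+1)^2$ contradicts.
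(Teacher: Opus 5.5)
Your proof is correct and matches the paper's argument: the paper also writes $24N+1=(6k-1)^2$ or $(6k+1)^2$ for the generalized pentagonal $N$ supplied by Proposition~\ref{prop:c-parity}, then reduces modulo $M$ and concludes by contraposition. Your bilateral parametrization $j\in\mathbb Z$ just combines the two signs into the single square $(6j+1)^2$.
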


The specialization $z=-1$ next supplies the corner companion of $f_3$:
\begin{equation}\label{eq:Cf-def}
C_f(q):=R_c(-1;q)=\sum_{m\ge0}\frac{q^m}{(-q;q)_m^2}.
\end{equation}
Its coefficient of $q^n$ is the number of corners of $n$ with even rank
minus the number with odd rank.

\begin{proposition}\label{prop:Cf}
We have
\begin{equation}\label{eq:Cf}
C_f(q)=-\frac{\fTheta(q)}{(-q;q)_\infty^2}+2\fE(q).
\end{equation}
\end{proposition}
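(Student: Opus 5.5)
The plan is a direct specialization of the first Part~II identity \eqref{eq:ram-part2-1}, followed by one application of the Part~I identity \eqref{eq:ram-ch9-2}. No combinatorics should be needed. Since $C_f(q)=\sum_{m\ge0}q^m/(-q;q)_m^2$ by \eqref{eq:Cf-def}, I would set $a=b=1$ in \eqref{eq:ram-part2-1}. The left side then becomes exactly $\sum_{n\ge0}q^n/\bigl((-q;q)_n(-q;q)_n\bigr)=C_f(q)$.

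On the right side, with $a=b=1$ the factor $a^{-m-1}b^m$ equals $1$, so the first term is
\[
\frac{1}{(-q;q)_\infty^2}\sum_{m\ge0}(-1)^{m+1}q^{\binom{m+1}{2}}
=-\frac{\fTheta(q)}{(-q;q)_\infty^2},
\]
using the definition \eqref{eq:fTheta} and $\binom{m+1}{2}=m(m+1)/2$. In the second term, the prefactor $1+1/a$ becomes $2$ and $a^{-m}b^m=1$. That term is therefore
\[
2\sum_{m\ge0}\frac{(-1)^mq^{m(m+1)/2}}{(-q;q)_m},
\]
which is $2\fE(q)$ by Ramanujan's identity \eqref{eq:ram-ch9-2}. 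Adding the two pieces gives \eqref{eq:Cf}.

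The only point needing care is that \eqref{eq:ram-part2-1} really holds at $a=1$, since $a^{-1}$ appears and the identity is stated with a general parameter. Both sides are formal power series in $q$ whose coefficients are Laurent polynomials in $a$ and $b$ (each coefficient of $q^N$ involves only finitely many terms). The specialization $a=b=1$ is therefore legitimate coefficientwise, and no pole arises. Beyond this bookkeeping I expect no obstacle. The proof is essentially a verification that the two Lost Notebook entries combine as described.
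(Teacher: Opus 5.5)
Your proposal is correct and follows the paper's proof exactly. Both specialize \eqref{eq:ram-part2-1} at $a=b=1$, read off $-\fTheta(q)/(-q;q)_\infty^2$ from the first term, and identify the second series, with prefactor $2$, as $2\fE(q)$ via \eqref{eq:ram-ch9-2}. Your remark on the legitimacy of specializing at $a=1$ is a harmless addition that the paper leaves implicit.
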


Finally, for $z=i$ we obtain the companion of $\phi_3$:
\begin{equation}\label{eq:Cphi-def}
C_\phi(q):=R_c(i;q)
=\sum_{m\ge0}\frac{q^m}{(-q^2;q^2)_m}.
\end{equation}

\begin{proposition}\label{prop:Cphi}
We have
\begin{align}
C_\phi(q)
&=\sum_{m\ge0}(q;q^4)_m q^m
+q^2\sum_{m\ge0}(q^3;q^4)_m q^{3m}
\label{eq:Cphi-finite}\\
&=\fE(-q).
\label{eq:Cphi-fE}
\end{align}
\end{proposition}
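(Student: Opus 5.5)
The plan is to prove the two equalities separately. The second one, \eqref{eq:Cphi-fE}, is a direct consequence of the Rogers--Fine corollary \eqref{eq:rogers-fine}. The first, \eqref{eq:Cphi-finite}, requires a genuine transformation of $C_\phi$.

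\emph{Step 1: \eqref{eq:Cphi-finite}$\Rightarrow$\eqref{eq:Cphi-fE}.} Apply \eqref{eq:rogers-fine} with $q\mapsto q^4$, first with $a=q$ and then with $a=q^3$. For $a=q$ it gives
\[
\sum_{m\ge0}(q;q^4)_mq^m=\sum_{k\ge0}(-1)^kq^{6k^2+k}\bigl(1+q^{4k+1}\bigr).
\]
For $a=q^3$, after multiplying by $q^2$, it gives
\[
q^2\sum_{m\ge0}(q^3;q^4)_mq^{3m}=\sum_{k\ge0}(-1)^k\bigl(q^{6k^2+7k+2}+q^{6k^2+11k+5}\bigr).
\]
Now split $\fE(-q)=\sum_{n\ge0}(-q)^{n(3n+1)/2}\bigl(1-(-q)^{2n+1}\bigr)$ according to the parity of $n$.

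For $n=2k$, the exponent is $k(6k+1)$, whose parity is that of $k$. This reproduces the first Rogers--Fine sum.

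For $n=2k+1$, the exponent is $(2k+1)(3k+2)=6k^2+7k+2$, again with parity that of $k$. The companion term $-(-q)^{2n+1}$ turns the second summand into $+(-1)^kq^{6k^2+11k+5}$, which reproduces the second sum.

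Adding the two parity classes gives \eqref{eq:Cphi-fE}, once \eqref{eq:Cphi-finite} is known.

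\emph{Step 2: \eqref{eq:Cphi-finite}.} Here we must convert $\sum_m q^m/(-q^2;q^2)_m$ into sums of finite products in base $q^4$. The natural route is a Fine/Heine-type transformation in base $q^2$.

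Viewing the summand as the case $a=q^2$, $b=0$, $c=-q^2$, $z=q$ of the left side of \eqref{eq:heine} with $q\mapsto q^2$, one obtains the limiting ($b\to0$) form of Heine's transformation. I would iterate it (or use its second iterate, Fine's transformation) until the summand carries a numerator product of the shape $(q;q^2)_n$ against a power of $q$. For instance, I expect a form like $\sum_n (q;q^2)_n(-1)^n q^{n}\cdots$, since $(q;q^2)_n$ is the base-$q^2$ product that splits naturally into base $q^4$.

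The next step is to separate the index $n$ into even and odd values. Writing $(q;q^2)_{2m}=(q;q^4)_m(q^3;q^4)_m$ and $(q;q^2)_{2m+1}=(q;q^4)_{m+1}(q^3;q^4)_m$, the even terms should assemble to $\sum_m(q;q^4)_mq^m$ and the odd terms to $q^2\sum_m(q^3;q^4)_mq^{3m}$. In practice I expect each parity class to need one more telescoping step, $(1-aq^{4m})$ against $a q^{4m}$, to match these exact forms.

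\emph{Fallback.} A second route is to specialize \eqref{eq:ram-part2-1} at $a=i$, $b=-i$, using $(iq;q)_n(-iq;q)_n=(-q^2;q^2)_n$. This expresses $C_\phi$ through a partial theta quotient plus a sum with denominators $(iq;q)_m$. Taking real parts then requires showing that the infinite-product term cancels against a piece of the second sum. This is less clean, so I would use it only if the Heine route stalls.

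\emph{Main obstacle.} The main obstacle is Step~2: finding the right sequence of transformations so that the base-$q^2$ denominators become base-$q^4$ numerators with exactly the stated weights $q^m$ and $q^{3m+2}$. The verification in Step~1 is routine bookkeeping of exponents and signs.
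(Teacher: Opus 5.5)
Your Step~1 is correct and is exactly the paper's treatment of the second equality: Rogers--Fine in base $q^4$ with $a=q$ and $a=q^3$, matched against the even/odd split of $\fE(-q)$. It is also unconditional, since it proves that the right side of \eqref{eq:Cphi-finite} equals $\fE(-q)$. What remains is a link from $C_\phi(q)$ to either expression, and your Step~2 does not supply one. The proposition's content is therefore unproved.

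The mechanism you sketch does not work as stated. Your base-$q^2$ Heine step with $b\to0$ gives
\[
 C_\phi(q)=\frac{1}{(-q^2;q^2)_\infty}\sum_{n\ge0}\frac{q^{n^2+n}}{(q^2;q^2)_n(1-q^{2n+1})},
\]
which moves away from finite-product form. The parity split you then propose also fails term by term. Splitting a summand containing $(q;q^2)_n$ gives $(q;q^2)_{2m}=(q;q^4)_m(q^3;q^4)_m$, which is both base-$q^4$ products at once. Each target sum contains only one of them, so the even terms cannot ``assemble'' into $\sum_m(q;q^4)_mq^m$ without a further, unspecified transformation.

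The missing idea is a genuine change of base. The paper first writes $1/(-q^2;q^2)_n=(q^2;q^2)_n/(q^4;q^4)_n$ and $(q^2;q^2)_n=(q^2;q^2)_\infty/(q^{2n+2};q^2)_\infty$. It then expands $1/(q^{2n+2};q^2)_\infty$ by the $q$-binomial theorem in base $q^2$, interchanges the sums, and sums over $n$ by the $q$-binomial theorem in base $q^4$. This gives
\[
 C_\phi(q)=(q^2;q^2)_\infty\sum_{m\ge0}\frac{q^{2m}}{(q^2;q^2)_m\,(q^{2m+1};q^4)_\infty}.
\]
Now the parity of $m$ decides whether $(q^{2m+1};q^4)_\infty$ is a tail of $(q;q^4)_\infty$ or of $(q^3;q^4)_\infty$. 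Each parity class therefore carries only one of the two products. Use $(q^2;q^2)_{2r}=(q^2;q^4)_r(q^4;q^4)_r$ and $(q^2;q^2)_{2r+1}=(1-q^2)(q^6;q^4)_r(q^4;q^4)_r$. Each class then becomes a basic hypergeometric series in base $q^4$ with argument $q^4$. Heine's transformation with $(a,b,c,z)=(q,q^4,0,q)$, respectively $(q^3,q^4,0,q^3)$, read from right to left, converts them into $\sum_n(q;q^4)_nq^n$ and $q^2\sum_n(q^3;q^4)_nq^{3n}$.

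Your fallback via \eqref{eq:ram-part2-1} at $a=i$, $b=-i$ is also incomplete. It gives
\[
 C_\phi(q)=\frac{iG(q)}{(-q^2;q^2)_\infty}+(1-i)\sum_{m\ge0}\frac{q^{m(m+1)/2}}{(iq;q)_m},
\]
and you would still need an evaluation of the last sum, which none of the recalled identities provides.
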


The two preceding propositions are direct evaluations of the indicated
root-of-unity specializations by the classical identities in
Section~\ref{sec:rank-functions}.  Their role is to complete the dictionary;
no novelty is claimed for the underlying $q$-series transformations.

\section{Odd corner companions}
\label{sec:odd}

We state the results first and give their proofs together in
Section~\ref{sec:proofs-odd}.

Let $\co(n)$ denote the number of nonempty corners of $n$ for which the peak,
row parts, and column parts are all odd.  Then
\begin{equation}\label{eq:Co-gf}
C_\omega(q):=\sum_{n\ge0}\co(n)q^n
=S_c(1;q)=\sum_{m\ge0}\frac{q^{2m+1}}{(q;q^2)_{m+1}^2}.
\end{equation}
For example, $\co(5)=9$:
\[
5,\ 311,\ \begin{smallmatrix}3&1\\1\end{smallmatrix},\
\begin{smallmatrix}3\\1\\1\end{smallmatrix},\ 11111,\
\begin{smallmatrix}1&1&1&1\\1\end{smallmatrix},\
\begin{smallmatrix}1&1&1\\1\\1\end{smallmatrix},\
\begin{smallmatrix}1&1\\1\\1\\1\end{smallmatrix},\
\begin{smallmatrix}1\\1\\1\\1\\1\end{smallmatrix}.
\]
After increasing the peak by $2$, this is Andrews's one-variable odd
convex-composition function \cite{AndrewsCC}.  Bringmann and Lovejoy later
developed the two-variable odd-unimodal rank series
\begin{equation}\label{eq:Oz-gf}
\mathcal O(z;q)=S_c(z;q)
=\sum_{m\ge0}\frac{q^{2m+1}}
{(zq;q^2)_{m+1}(z^{-1}q;q^2)_{m+1}}.
\end{equation}
Their Theorem~1.1 gives
\begin{align}
\mathcal O(z;q)
&=\sum_{j\ge0}(-1)^{j+1}z^{3j+1}q^{3j^2+2j}(1+zq^{2j+1})
\notag\\
&\quad+\frac{1}{(zq;q^2)_\infty(z^{-1}q;q^2)_\infty}
\sum_{j\ge0}(-1)^jz^{2j+1}q^{j(j+1)}.
\label{eq:Oz-ramanujan}
\end{align}
The first line of \eqref{eq:Oz-ramanujan} has a common boxed modular
interpretation.  The square reparametrization $z=w^2$ avoids fractional
powers of the rank variable.  The precise boxed $3$-modular identity is as
follows.

\begin{proposition}
\label{prop:odd-boxed-correction}
For a formal variable $w$,
\begin{align}
&\sum_{j\ge0}(-1)^{j+1}w^{6j+2}q^{3j^2+2j}
 \left(1+w^2q^{2j+1}\right)\notag\\
&\qquad=-w^2\mathfrak B_{3,1}(w^3,q)
          -w^4q\mathfrak B_{3,2}(w^3,q).
\label{eq:odd-boxed-correction}
\end{align}
Thus the complete rank-refined sparse term in
\eqref{eq:Oz-ramanujan}, after $z=w^2$, is governed by two instances of the
single boxed modular involution \eqref{eq:burson-boxed}.
\end{proposition}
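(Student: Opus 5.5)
The identity is purely formal, so I would prove it by substituting into the closed (third) form of \eqref{eq:burson-boxed} and comparing coefficients. Burson's boxed identity, which we take as given from \cite[Thm.~5.2]{Burson}, says that for every admissible pair $(m,r)$,
\[
\mathfrak B_{m,r}(u,q)=\sum_{j\ge0}(-1)^ju^{2j}q^{j(mj+2r)} .
\]

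First take $m=3$ and $u=w^3$. With $r=1$ this gives
\[
\mathfrak B_{3,1}(w^3,q)=\sum_{j\ge0}(-1)^jw^{6j}q^{3j^2+2j}.
\]
With $r=2$ it gives
\[
\mathfrak B_{3,2}(w^3,q)=\sum_{j\ge0}(-1)^jw^{6j}q^{3j^2+4j}.
\]

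Next expand the left side of \eqref{eq:odd-boxed-correction} into its two pieces. The first piece is
\[
\sum_{j\ge0}(-1)^{j+1}w^{6j+2}q^{3j^2+2j}=-w^2\mathfrak B_{3,1}(w^3,q).
\]
The second piece comes from the factor $w^2q^{2j+1}$. It is
\[
\sum_{j\ge0}(-1)^{j+1}w^{6j+4}q^{3j^2+4j+1}=-w^4q\,\mathfrak B_{3,2}(w^3,q),
\]
because $3j^2+2j+2j+1=3j^2+4j+1$. Adding the two pieces gives the right side of \eqref{eq:odd-boxed-correction}.

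The only points needing care are bookkeeping. The exponent $u^{2j}=w^{6j}$ must combine correctly with the prefactors $w^2$ and $w^4$. The shift in the $q$-exponent must match $2r=4$ for $r=2$. Both conditions $1\le r<3$ are met, so the parameters are admissible.

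For the interpretive sentence ("governed by two instances of the boxed involution"), I would note that the substitution $u=w^3$ and the multiplication by the monomials $-w^2$ and $-w^4q$ are applied uniformly to every pair $(k,\overline\pi)$. This does not interfere with Burson's involution, which preserves $N$ and $k+\nu(\overline\pi)$. The fixed points \eqref{eq:burson-fixed-pairs} $(j,(3j+1)^j)$ and $(j,(3j+2)^j)$ then map exactly to the $j$-th terms of the two sparse sums.

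No step is a real obstacle. The one thing to check is that the second term's exponent $3j^2+4j+1$ matches $j(3j+4)+1$, which it does.
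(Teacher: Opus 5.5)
Your proof is correct and is essentially the paper's argument. You specialize the closed form of \eqref{eq:burson-boxed} at $(m,r)=(3,1)$ and $(3,2)$ with $u=w^3$, multiply by $-w^2$ and $-w^4q$ respectively, and add; your remark on the fixed points $(j,(3j+1)^j)$ and $(j,(3j+2)^j)$ is consistent with how the paper reads the interpretive sentence.
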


From \eqref{eq:Co-gf} and \eqref{eq:Oz-gf}--\eqref{eq:Oz-ramanujan},
applied with $z=1$, we obtain
\begin{equation}\label{eq:Co-second}
C_\omega(q)
=-\sum_{j\ge0}(-1)^jq^{3j^2+2j}(1+q^{2j+1})
+\frac{1}{(q;q^2)_\infty^2}
\sum_{j\ge0}(-1)^jq^{j(j+1)}.
\end{equation}
Thus the same denominator that defines $\omega_3(q)$ now carries the linear
peak weight $2m+1$ rather than the quadratic weight $2m(m+1)$.

The simple largest-part interpretation survives in the odd setting.

\begin{theorem}\label{thm:Co-colored}
For $n\ge0$, $\co(n)$ equals the number of two-color partitions of $n+2$
into odd parts in which the largest part occurs exactly once, and that
occurrence is red.
\end{theorem}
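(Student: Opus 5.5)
We compare generating functions. A two-color partition of $n+2$ into odd parts whose largest part $2m+1$ occurs exactly once and is red is made of three pieces. The first is the distinguished red part $2m+1$. The second is an arbitrary partition into red odd parts at most $2m-1$. The third is an arbitrary partition into blue odd parts at most $2m+1$. Summing over $m\ge0$, the generating function (in $n+2$) is
\[
\sum_{m\ge0}\frac{q^{2m+1}}{(q;q^2)_m(q;q^2)_{m+1}}.
\]
So the claim is equivalent to the identity
\[
q^2 C_\omega(q)=\sum_{m\ge0}\frac{q^{2m+1}}{(q;q^2)_m(q;q^2)_{m+1}},
\]
with $C_\omega$ as in \eqref{eq:Co-gf}. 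The constant term, corresponding to $n+2=0$, does not arise, since every such partition has a part.

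To prove this identity, we multiply each summand on the right by $(1-q^{2m+1})/(1-q^{2m+1})$. This gives
\[
\sum_{m\ge0}\frac{q^{2m+1}}{(q;q^2)_{m+1}^2}-\sum_{m\ge0}\frac{q^{4m+2}}{(q;q^2)_{m+1}^2}
=C_\omega(q)-\sum_{m\ge0}\frac{q^{4m+2}}{(q;q^2)_{m+1}^2}.
\]
It therefore suffices to show that
\[
\sum_{m\ge0}\frac{q^{4m+2}}{(q;q^2)_{m+1}^2}=(1-q^2)C_\omega(q).
\]
I would prove this by telescoping. Put $a_m=1/(q;q^2)_m^2$. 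Then
\[
a_{m+1}-a_m=\frac{1-(1-q^{2m+1})^2}{(q;q^2)_{m+1}^2}=\frac{2q^{2m+1}-q^{4m+2}}{(q;q^2)_{m+1}^2}.
\]
Summing over $m\ge0$ and using $a_0=1$ and $a_\infty=1/(q;q^2)_\infty^2$ gives
\[
2C_\omega(q)-\sum_{m\ge0}\frac{q^{4m+2}}{(q;q^2)_{m+1}^2}=\frac1{(q;q^2)_\infty^2}-1.
\]
This expresses the $q^{4m+2}$ sum through $C_\omega$ and an infinite product, but it does not directly give the factor $(1-q^2)$. The hard part is to eliminate the product term.

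For that step I would first try a direct shift argument instead of telescoping. Write
\[
\frac{q^{2}\cdot q^{2m+1}}{(q;q^2)_{m+1}^2}\cdot(1-q^{2m+1})\ \text{ versus }\ \frac{q^{2m+3}}{(q;q^2)_{m+2}^2}.
\]
That is, I would reindex $m\mapsto m+1$ in $C_\omega$ and compare term by term. If that fails, I would fall back on evaluating both sides through \eqref{eq:Co-second}: the $q^{4m+2}$ sum is $\frac{d}{dz}$-free and can be handled by the same Heine and Bringmann--Lovejoy machinery. An alternative is a combinatorial bijection. Starting from an odd corner of $n$, one would move the peak $2m+1$ to a red largest part $2m+3$, which adds $2$. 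The column parts, all at most $2m+1$, would become red parts below the top, and the row parts would become blue parts at most $2m+3$. The defect is that a row part could then equal $2m+3$ while the row parts are only at most $2m+1$. This suggests that the bijection should instead send the peak $2m+1$ to red $2m+3$ and use the identity above to account for the missing blue parts equal to $2m+3$.

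Honestly, I expect the cleanest route to be a bijection of this kind. Take the red distinguished part $2m+1$ in the colored partition, together with its blue parts $\le 2m+1$ and red parts $\le 2m-1$. Subtract $2$ from the largest part; this is valid when the remaining parts fit, and otherwise reshuffle the parts equal to $2m+1$. Verifying that this reshuffling is a bijection, equivalently proving the generating-function identity above, is the main obstacle.
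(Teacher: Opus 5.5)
There is a genuine gap, and it starts in your first step: you have misread the condition. ``The largest part occurs exactly once'' means that the part size $2m+1$ occurs exactly once in the whole two-color partition, counting both colors. Since that one occurrence is red, no blue part may equal $2m+1$. You allowed blue parts up to $2m+1$, so your generating function $\sum_{m\ge0}q^{2m+1}/\bigl((q;q^2)_m(q;q^2)_{m+1}\bigr)$ counts the wrong objects, and the identity you set out to prove is false. Its $m=0$ term $q/(1-q)$ already gives coefficient $1$ at $q^1$ and at $q^2$, where $q^2C_\omega(q)$ has coefficient $0$. At $q^3$ you count both (red $3$) and (red $1$, blue $1$, blue $1$), whereas $\co(1)=1$. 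Equivalently, your reduced identity $\sum_{m\ge0}q^{4m+2}/(q;q^2)_{m+1}^2=(1-q^2)C_\omega(q)$ fails at $q^1$: the left side begins at $q^2$ and the right side at $q$. No telescoping, Heine transformation, or reshuffling bijection can rescue it. The ``main obstacle'' you identify is an artifact of the misreading, and as written the proposal does not prove the statement.

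With the correct reading the theorem is immediate, and this is exactly the paper's argument. If the largest part is $2m+3$, all remaining red and blue parts are odd and at most $2m+1$. Hence for weights $N\ge2$ the generating function is $\sum_{m\ge0}q^{2m+3}/(q;q^2)_{m+1}^2=q^2C_\omega(q)$. A largest part equal to $1$ gives only the single partition of $N=1$, which lies outside the range $n\ge0$. Your own sketched bijection is in fact the right one. Send an odd corner with peak $2m+1$ to the red part $2m+3$, with the column parts becoming the remaining red parts and the row parts becoming the blue parts. The ``defect'' you worried about disappears, because the blue parts are required to be at most $2m+1$, precisely the bound satisfied by the arm parts.
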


We next define the smallest-part companion needed for the exact discrepancy.
Let $\ptdo(n)$ count two-color partitions of $n$ with smallest part size $s$
such that at least one smallest part is red, the red part of size $s$ occurs
once or twice and all larger red parts are distinct, while the blue parts are
odd and at least $2s-1$.  Put $\ptdo(0)=0$.  Then
\begin{equation}\label{eq:Ptdo-gf}
\sum_{n\ge0}\ptdo(n)q^n
=\sum_{s\ge1}q^s\frac{(-q^s;q)_\infty}{(q^{2s-1};q^2)_\infty}.
\end{equation}
The definition is an analytic companion: fixing the smallest part $s$
produces exactly the product needed in the transformation leading to
Theorem~\ref{thm:Co-Ptdo}.

For $a\ge0$, let
\[
 \delta_a=(a,a-1,\ldots,2,1),
 \qquad |\delta_a|=T_a=\frac{a(a+1)}2,
\]
with $\delta_0$ empty.  If $\Gamma$ is an ordinary corner, write $2\Gamma$
for the even corner obtained by doubling every entry.  A
\emph{staircase-decorated even corner} is a triple
\[
 (2\Gamma,\delta_a,\delta_b)
\]
of weight $2|\Gamma|+T_a+T_b$, with the two staircases regarded as
independent decorations of the labeled row and column arms.  Let $e_\triangle(N)$ denote the number of
such triples of weight $N$.  By \eqref{eq:c-gf} and \eqref{eq:gauss},
\begin{equation}\label{eq:Etriangle-gf}
\begin{aligned}
 E_\triangle(q)
 &:=\sum_{N\ge0}e_\triangle(N)q^N\\
 &=\left(\sum_{a\ge0}q^{T_a}\right)^2
   \left(\sum_{k\ge0}\frac{q^{2k}}{(q^2;q^2)_k^2}\right)\\
 &=G(q)^2C(q^2).
\end{aligned}
\end{equation}

The two-color companion admits the following positive decomposition into
two corner families.

\begin{theorem}\label{thm:Ptdo-positive-decomposition}
We have
\begin{equation}\label{eq:Ptdo-positive-decomposition}
 \sum_{N\ge0}\ptdo(N+1)q^N
 =C_\omega(q)+E_\triangle(q).
\end{equation}
Equivalently, for every $N\ge0$,
\begin{equation}\label{eq:Ptdo-positive-coeff}
 \ptdo(N+1)=\co(N)+e_\triangle(N).
\end{equation}
\end{theorem}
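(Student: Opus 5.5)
The plan is to prove \eqref{eq:Ptdo-positive-decomposition} at the level of generating functions. I would reduce the left side to a single basic hypergeometric series with the prefactor $1/(q;q^2)_\infty^2$, and then evaluate that series by a Heine transformation.

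\emph{Normalizing the left side.} Using \eqref{eq:Ptdo-gf}, write
\[
(-q^s;q)_\infty=\frac{(-q;q)_\infty}{(-q;q)_{s-1}},
\qquad
\frac1{(q^{2s-1};q^2)_\infty}=\frac{(q;q^2)_{s-1}}{(q;q^2)_\infty}.
\]
Euler's identity $(-q;q)_\infty=1/(q;q^2)_\infty$ then gives
\[
\sum_{N\ge0}\ptdo(N+1)q^N=\frac{1}{(q;q^2)_\infty^2}\sum_{n\ge0}\frac{(q;q^2)_n\,q^n}{(-q;q)_n}.
\]
On the right side, \eqref{eq:Etriangle-gf}, Proposition~\ref{prop:C-fTheta} at $q^2$, and Gauss's identity \eqref{eq:gauss} combine to
\[
E_\triangle(q)=G(q)^2\,\frac{\fTheta(q^2)}{(q^2;q^2)_\infty^2}=\frac{\fTheta(q^2)}{(q;q^2)_\infty^2}.
\]
Note that $\fTheta(q^2)=\sum_j(-1)^jq^{j(j+1)}$ is exactly the partial theta series occurring in \eqref{eq:Co-second}.

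\emph{Reduced identity.} Multiplying through by $(q;q^2)_\infty^2$ and inserting \eqref{eq:Co-second}, the theorem becomes
\[
\sum_{n\ge0}\frac{(q;q^2)_nq^n}{(-q;q)_n}
=2\sum_{j\ge0}(-1)^jq^{j(j+1)}-(q;q^2)_\infty^2\sum_{j\ge0}(-1)^jq^{3j^2+2j}(1+q^{2j+1}).
\]
I would prove this directly. Write $(q;q^2)_n=(q^{1/2};q)_n(-q^{1/2};q)_n$ in the formal ring $\mathbb Z[[q^{1/2}]]$. The left side is then the ${}_2\phi_1$ in \eqref{eq:heine} with $a=q^{1/2}$, $b=-q^{1/2}$, $c=-q$, $z=q$.

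Applying \eqref{eq:heine} makes the numerator parameter $c/b=q^{1/2}$ appear, and the $(q;q)_n$ factors cancel. The series collapses to
\[
\frac{(-q^{1/2};q)_\infty(q^{3/2};q)_\infty}{(-q;q)_\infty(q;q)_\infty}\,(1-q^{1/2})\sum_{n\ge0}\frac{(-1)^nq^{n/2}}{1-q^{n+1/2}},
\]
which is an Appell--Lerch type sum times an explicit product. I would perform the same Heine reduction on $C_\omega(q)=\sum_m q^{2m+1}/(q;q^2)_{m+1}^2$ (parameters $a=b=q$, $c=q^3$ in base $q^2$ after factoring one term). This expresses $C_\omega$ through a comparable Lambert series.

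\emph{Main obstacle.} The hardest step is matching the two Lambert series, i.e.\ showing that their difference is exactly the theta-quotient piece $\fTheta(q^2)/(q;q^2)_\infty^2$. Both Lambert series must be expanded geometrically in the double-sum region $n\ge0$, $k\ge0$. The half-integer exponents must then be split by parity so that everything returns to $\mathbb Z[[q]]$, and the boundary terms must be recognized as $\fTheta(q^2)$ via Jacobi's triple product \eqref{eq:jacobi-triple}.

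If this bookkeeping becomes unwieldy, the fallback is to avoid the Lambert series altogether. I would specialize \eqref{eq:ram-part2-1} with base $q^2$ and suitable $a,b$ after splitting $(-q;q)_n$ into even and odd factors. That would produce the partial-theta term $2\fTheta(q^2)$ directly, and I would identify the remaining sparse term with the $\fO(-q)$-type series using \eqref{eq:Oz-ramanujan} at $z=1$.
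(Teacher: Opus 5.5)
Your preliminary reductions are correct. The left side does normalize to $(q;q^2)_\infty^{-2}\sum_{n\ge0}(q;q^2)_nq^n/(-q;q)_n$, and one has $E_\triangle(q)=\fTheta(q^2)/(q;q^2)_\infty^2$. Granted \eqref{eq:Co-second}, your reduced identity is equivalent to the theorem; both sides begin $1+q-q^2+q^3+\cdots$. But that reduced identity is the whole content, and your argument for it fails at the first step.

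The series $\sum_n(q;q^2)_nq^n/(-q;q)_n$ is not the ${}_2\phi_1$ of \eqref{eq:heine} with $(a,b,c,z)=(q^{1/2},-q^{1/2},-q,q)$. That ${}_2\phi_1$ has denominator $(q;q)_n(-q;q)_n=(q^2;q^2)_n$, so it equals $\sum_n(q;q^2)_nq^n/(q^2;q^2)_n$. This lacks the factor $(q;q)_n$ present in each of your terms, and the $q$-binomial theorem sums it to $G(q)$. Accordingly, your displayed Lambert-series expression equals $G(q)$: its prefactor is $1/G(q)$, and $\sum_n(-1)^nq^{n/2}/(1-q^{n+1/2})=G(q)^2$. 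Your left side, by contrast, has coefficient $-1$ at $q^2$.

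Your series is really a ${}_3\phi_2$, and Heine does not apply to it. The same misidentification affects your proposed Heine reduction of $C_\omega(q)$: its summands $q^{2m}/(q^3;q^2)_m^2$ are not of the form $(q;q^2)_m^2q^{2m}/\bigl((q^2;q^2)_m(q^3;q^2)_m\bigr)$. Beyond this, the matching of the two Lambert series, which you yourself call the main obstacle, is left open. The fallback is also not an instance of \eqref{eq:ram-part2-1} as stated, since its summands carry no numerator product such as $(q;q^2)_n$. So no proof of the key identity is actually given.

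The missing idea is to keep the factor $(q;q)_n$ and expand it, rather than force a ${}_2\phi_1$. Write $(-q;q)_\infty/(-q;q)_n=(q^2;q^2)_\infty/\bigl((q^2;q^2)_n\,(q^{n+1};q)_\infty\bigr)$ and expand $1/(q^{n+1};q)_\infty=\sum_{r\ge0}q^{r(n+1)}/(q;q)_r$. After interchanging the sums, the inner sum over $n$ is precisely your ${}_2\phi_1$ with $z=q^{r+1}$. The $q$-binomial theorem in base $q^2$ evaluates it as $(q^{r+2};q^2)_\infty/(q^{r+1};q^2)_\infty$. The $r$th term then simplifies to $q^{2k+1}/(q;q^2)_{k+1}^2$ when $r=2k+1$, and to $G(q)^2q^{2k}/(q^2;q^2)_k^2$ when $r=2k$, which is the theorem term by term.

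This is the paper's proof (Lemma~\ref{lem:Co-split}). It uses only the $q$-binomial theorem and needs neither \eqref{eq:Co-second} nor Proposition~\ref{prop:C-fTheta}. Your route would instead make the theorem depend on the Bringmann--Lovejoy identity, together with a further mixed partial/false-theta identity that you would still have to prove.
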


Thus the comparison between the two-color companion and two copies of the
odd corners can be rewritten as a comparison between two corner families.
A direct sign-reversing involution on staircase-decorated even corners and
odd corners would prove the sparse discrepancy combinatorially.  The boxed
modular model below supplies the required fixed points, but not yet such a
direct corner-to-corner involution.

\begin{theorem}\label{thm:Co-Ptdo}
For $n\ge0$,
\begin{equation}\label{eq:Ptdo-theta-coeff}
\ptdo(n+1)-2\co(n)=
\begin{cases}
(-1)^j,&n=3j^2+2j,\\
(-1)^j,&n=3j^2+4j+1,\\
0,&\text{otherwise.}
\end{cases}
\end{equation}
\end{theorem}

Combining the positive decomposition with Burson's boxed modular identity
gives the following corner-to-corner and signed-set descriptions.

\begin{corollary}
\label{cor:two-color-boxed}
For every $N\ge0$,
\begin{align}
 e_\triangle(N)-\co(N)
 &=\ptdo(N+1)-2\co(N)\notag\\
 &=\sum_{(k,\overline\pi)\in\mathcal B_{3,1}(N)}
   (-1)^{\nu(\overline\pi)}\notag\\
 &\quad+\sum_{(k,\overline\pi)\in\mathcal B_{3,2}(N-1)}
   (-1)^{\nu(\overline\pi)}.
\label{eq:two-color-boxed}
\end{align}
The second sum is understood to be empty when $N=0$.  Under Burson's
involutions, the first signed sum is nonzero precisely when
$N=3j^2+2j$, and the second is nonzero precisely when
$N=3j^2+4j+1$; in either case the surviving value is $(-1)^j$.
\end{corollary}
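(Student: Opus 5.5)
The plan is to chain three results already stated: Theorem~\ref{thm:Ptdo-positive-decomposition}, Theorem~\ref{thm:Co-Ptdo}, and Proposition~\ref{prop:odd-boxed-correction} at $w=1$, together with Burson's description \eqref{eq:burson-fixed-pairs} of the fixed points.

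\textbf{First equality.} Subtract $2\co(N)$ from both sides of \eqref{eq:Ptdo-positive-coeff}. This gives
\[
\ptdo(N+1)-2\co(N)=e_\triangle(N)-\co(N)
\]
at once.

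\textbf{Second equality.} I would work at the level of generating functions. By Theorem~\ref{thm:Co-Ptdo},
\[
\sum_{N\ge0}\bigl(\ptdo(N+1)-2\co(N)\bigr)q^N=\sum_{j\ge0}(-1)^jq^{3j^2+2j}(1+q^{2j+1}),
\]
since $3j^2+4j+1=3j^2+2j+(2j+1)$. I expect the two progressions $3j^2+2j$ and $3j^2+4j+1$ never to coincide, because the first is $\equiv 0$ or $1\pmod 4$ pattern-wise and differs from the second in parity arguments. This disjointness should be confirmed, but it is not needed for the generating-function statement anyway.

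Next, set $w=1$ in \eqref{eq:odd-boxed-correction}. The left side becomes minus the series just displayed, and the right side becomes $-\mathfrak B_{3,1}(1,q)-q\,\mathfrak B_{3,2}(1,q)$. Hence
\[
\sum_{j\ge0}(-1)^jq^{3j^2+2j}(1+q^{2j+1})=\mathfrak B_{3,1}(1,q)+q\,\mathfrak B_{3,2}(1,q).
\]
Setting $u=1$ in the first line of \eqref{eq:burson-boxed} shows that the coefficient of $q^N$ in $\mathfrak B_{3,r}(1,q)$ is exactly $\sum_{(k,\overline\pi)\in\mathcal B_{3,r}(N)}(-1)^{\nu(\overline\pi)}$. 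Comparing coefficients of $q^N$ then yields \eqref{eq:two-color-boxed}. The shift by $q$ produces the index $N-1$, and the convention $\mathcal B_{3,2}(-1)=\varnothing$ handles $N=0$.

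\textbf{Fixed-point statement.} Apply Burson's involution, which preserves $N$ and reverses sign on nonfixed pairs, so only fixed points survive. For $(m,r)=(3,1)$ the fixed points $(j,(3j+1)^j)$ have weight $j+j(3j+1)=3j^2+2j$ and sign $(-1)^j$. For $(m,r)=(3,2)$ the fixed points $(j,(3j+2)^j)$ have weight $2j+j(3j+2)=3j^2+4j$, so in $\mathcal B_{3,2}(N-1)$ they give $N=3j^2+4j+1$, again with sign $(-1)^j$. Since $j\mapsto 3j^2+2j$ and $j\mapsto 3j^2+4j$ are strictly increasing, each nonzero $N$ arises from a unique fixed point, so the surviving value is exactly $(-1)^j$.

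\textbf{Main obstacle.} The argument is essentially bookkeeping. The only real care needed is that setting $u=1$ is legitimate in the formal ring, which holds because each $q^N$-coefficient is a finite sum, and that the index shifts are matched correctly.
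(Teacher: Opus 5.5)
Your proposal is correct and follows the paper's proof essentially step for step. The steps are: the first equality from \eqref{eq:Ptdo-positive-coeff}; the generating-function form of Theorem~\ref{thm:Co-Ptdo}; the $u=1$ specialization of \eqref{eq:burson-boxed}, which you reach through Proposition~\ref{prop:odd-boxed-correction} at $w=1$, an equivalent route; coefficient extraction with the shift to $N-1$; and the fixed-point weights $3j^2+2j$ and $3j^2+4j$.

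One correction to your aside: both progressions take the residues $0$ and $1$ modulo $4$, so that heuristic does not separate them. Disjointness instead follows from $3(3j^2+2j)+1=(3j+1)^2$ and $3(3j^2+4j+1)+1=(3j+2)^2$, although, as you say, it is not needed for this corollary.
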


\begin{proposition}\label{prop:Co-parity}
For $n\ge0$, $\co(n)$ is odd if and only if $3n+1$ is an even square.
\end{proposition}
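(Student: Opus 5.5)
Work modulo $2$ with \eqref{eq:Co-second}. The first sum is sparse and its coefficients are $\pm1$ on the exponents $3j^2+2j$ and $3j^2+4j+1$. The second term is
\[
\frac{1}{(q;q^2)_\infty^2}\sum_{j\ge0}(-1)^jq^{j(j+1)}.
\]
Modulo $2$ we have $(q;q^2)_\infty^2\equiv(q^2;q^4)_\infty$, and the signs $(-1)^j$ disappear. So the second term is congruent to $G(q^2)/(q^2;q^4)_\infty$, where $G(q^2)=\sum_j q^{j(j+1)}$.

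By Gauss's identity \eqref{eq:gauss} with $q\to q^2$, $G(q^2)=(q^4;q^4)_\infty/(q^2;q^4)_\infty$. Hence the second term is congruent to
\[
\frac{(q^4;q^4)_\infty}{(q^2;q^4)_\infty^2}\equiv\frac{(q^4;q^4)_\infty}{(q^4;q^8)_\infty}=G(q^4)\pmod 2.
\]
Thus, modulo $2$,
\[
C_\omega(q)\equiv\sum_{j\ge0}\bigl(q^{3j^2+2j}+q^{3j^2+4j+1}\bigr)+\sum_{k\ge0}q^{2k(k+1)}.
\]

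Next identify the supports. The two exponent families in the first sum together are the values $n=(m^2-1)/3$ with $3\nmid m$. Indeed, $3(3j^2+2j)+1=(3j+1)^2$ and $3(3j^2+4j+1)+1=(3j+2)^2$, so the pair covers every $m\ge1$ with $3\nmid m$, each exponent once. The triangular exponent $2k(k+1)$ gives $3\cdot2k(k+1)+1=6k^2+6k+1$, which is generally not a square. So the two supports must be compared directly.

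Now work coefficientwise. For fixed $n$, $\co(n)$ is odd exactly when the total multiplicity of $n$ across the two families is odd. The first family contributes $1$ precisely when $3n+1$ is a square $m^2$ with $m\ge1$; the condition $3\nmid m$ is automatic, since $m^2\equiv1\pmod 3$. The triangular family contributes $1$ precisely when $n=2k(k+1)$.

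Comparing with the claim that $\co(n)$ is odd iff $3n+1$ is an even square, the statement forces two things:
\begin{itemize}
\item[(a)] when $3n+1=m^2$ with $m$ odd, $n$ is of the form $2k(k+1)$, so the two contributions cancel;
\item[(b)] every $n=2k(k+1)$ has $3n+1$ an odd square.
\end{itemize}
Part (a) is true: if $m$ is odd then $n=(m^2-1)/3$ is divisible by $8$, and so on. Part (b) should be checked at $k=1$: $n=4$ gives $3n+1=13$, which is not a square. So (b) fails as stated, and the direct congruence for the second term cannot be right.

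The step I distrust is the claim that $(q;q^2)^2\equiv(q^2;q^4)$ leaves exactly $G(q^4)$. The error is more likely in how $G(q^2)$ combines with the denominator. I will recheck it as follows: $1/(q^2;q^4)_\infty^2\equiv1/(q^4;q^8)_\infty$, and multiplying by $(q^4;q^4)_\infty$ gives $G(q^4)$, i.e. exponents $2k(k+1)$. If this is confirmed, I will test the proposition itself against $\co(5)=9$, which is odd with $3\cdot5+1=16$ an even square, and against small cases such as $\co(4)$, to pin down the correct mod-$2$ reduction.

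The main obstacle is therefore matching the triangular contribution against the odd-square exponents. If the direct congruence keeps producing spurious terms, I will use a different route. Theorem~\ref{thm:Co-Ptdo} gives $2\co(n)\equiv$ the sparse term minus $\ptdo(n+1)$, which does not determine parity. Instead I will start from the alternative representation \eqref{eq:ram-part3-psi}, namely $C_\omega=\psi_3$-type data divided by $q$, and compute $\psi_3(q)\bmod 2$. Modulo $2$ the summand $q^{n^2}/(q;q^2)_n$ agrees with $q^{n^2}/(-q;q^2)_n$. Applying \eqref{eq:ram-ch9-3}-type false-theta evaluations to it should yield a support on $(m^2-1)/3$ with $m$ even, i.e. on those $n$ with $3n+1$ an even square.
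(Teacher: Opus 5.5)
Your proposal does not reach a proof: the one computation it rests on is wrong, and it is never repaired. Everything up to
\[
\frac{1}{(q;q^2)_\infty^2}\sum_{j\ge0}(-1)^jq^{j(j+1)}\equiv\frac{(q^4;q^4)_\infty}{(q^4;q^8)_\infty}\pmod 2
\]
is fine, but this quotient is not $G(q^4)$. Since $(q^4;q^4)_\infty=(q^4;q^8)_\infty(q^8;q^8)_\infty$, the quotient equals $(q^8;q^8)_\infty$. By contrast, Gauss's identity \eqref{eq:gauss} gives $G(q^4)=(q^8;q^8)_\infty/(q^4;q^8)_\infty$.

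Your ``recheck'' repeats exactly this slip, so the spurious exponents $2k(k+1)$ survive. Indeed $\co(4)=6$ is even, while your congruence predicts it is odd. Your claim (a) is also false as stated: $m=5$ gives $n=8$, which is not of the form $2k(k+1)$. The fallback does not help either. \eqref{eq:ram-part3-psi} is the identity $C_\psi(q)=q\omega_3(q)$, and neither side is $C_\omega$ or $\psi_3$, so it gives no access to $\co(n)$ modulo $2$.

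Once the slip is corrected, your route works. Euler's theorem \eqref{eq:euler-pentagonal} gives $(q^8;q^8)_\infty\equiv\sum_{k\in\mathbb Z}q^{4k(3k+1)}\pmod 2$, and $3\cdot4k(3k+1)+1=(6k+1)^2$. As $k$ runs over $\mathbb Z$, $|6k+1|$ runs exactly once over the odd positive integers prime to $3$. Thus the second term of \eqref{eq:Co-second} is supported precisely on the odd-$m$ part of your sparse family $\{(m^2-1)/3:3\nmid m\}$. These terms cancel modulo $2$, and what is left is $3n+1=m^2$ with $m$ even.

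The paper's argument is shorter and needs no cancellation. It reduces \eqref{eq:Co-gf} directly via $(q;q^2)_{m+1}^2\equiv(q^2;q^4)_{m+1}$ and applies \eqref{eq:ram-ch9-3} with $q\mapsto q^2$, the signs being irrelevant modulo $2$. This produces only the exponents $12j^2+8j+1$ and $12j^2+16j+5$, that is, $3n+1=(6j+2)^2$ and $(6j+4)^2$. Your corrected version instead uses the heavier Bringmann--Lovejoy evaluation together with Gauss and Euler, but it does make visible why the odd squares drop out.
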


\begin{corollary}\label{cor:Ptdo-mod4}
Let $\chi_3$ be the nontrivial character modulo $3$.  For $n\ge0$,
\[
\ptdo(n)\equiv
\begin{cases}
\chi_3(x),&3n-2=x^2\text{ for some }x>0,\\
0,&3n-2\text{ is not a square}
\end{cases}
\pmod4.
\]
\end{corollary}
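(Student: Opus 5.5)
We combine Theorem~\ref{thm:Co-Ptdo} with Proposition~\ref{prop:Co-parity}. Replacing $n$ by $n-1$ in \eqref{eq:Ptdo-theta-coeff} gives, for $n\ge1$,
\[
 \ptdo(n)=2\co(n-1)+\epsilon(n-1),
\]
where $\epsilon(m)=(-1)^j$ if $m=3j^2+2j$ or $m=3j^2+4j+1$ for some $j\ge0$, and $\epsilon(m)=0$ otherwise. Reducing modulo $4$, the term $2\co(n-1)$ is $2$ or $0$ according to the parity of $\co(n-1)$. By Proposition~\ref{prop:Co-parity}, $\co(n-1)$ is odd exactly when $3(n-1)+1=3n-2$ is an even square.

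Next we translate the support of $\epsilon$ into the condition on $3n-2$.
\begin{itemize}
\item If $n-1=3j^2+2j$, then $3n-2=9j^2+6j+1=(3j+1)^2$.
\item If $n-1=3j^2+4j+1$, then $3n-2=9j^2+12j+4=(3j+2)^2$.
\end{itemize}
Conversely, $3n-2\equiv1\pmod3$, so any square $3n-2=x^2$ with $x>0$ has $3\nmid x$. Thus $x=3j+1$ or $x=3j+2$ with $j\ge0$, and $n-1$ lies in exactly one of the two families. The two families are disjoint since their $x$-values differ modulo $3$. For the sign, $x=3j+1$ gives $\chi_3(x)=1$, and $x=3j+2$ gives $\chi_3(x)=-1$.

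The sign and parity then combine as follows. Write $x=3j+1$ or $x=3j+2$. Then $x$ is even exactly when $j$ is odd, in both cases. So when $3n-2=x^2$:
\begin{itemize}
\item If $j$ is even, $x$ is odd, $\co(n-1)$ is even, and $\ptdo(n)\equiv(-1)^j=1\pmod4$.
\item If $j$ is odd, $x$ is even, $\co(n-1)$ is odd, and $\ptdo(n)\equiv2+(-1)^j=2-1=1\pmod4$.
\end{itemize}
So in both cases $\ptdo(n)\equiv1\pmod4$, which would contradict the claimed value $\chi_3(x)=-1$ for $x\equiv2\pmod3$. The careful point is therefore the exact sign bookkeeping. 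I would recheck the signs $(-1)^j$ in \eqref{eq:Ptdo-theta-coeff} against \eqref{eq:Co-second} and small cases ($n=1,2,3$, using $\co(0)=0$, $\co(1)=1$), and reconcile the result with $\chi_3(x)$. It may be that the intended sign pattern makes $(-1)^j+2[\,x\text{ even}\,]$ equal to $\chi_3(x)$ modulo $4$.

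If $3n-2$ is not a square, then $\epsilon(n-1)=0$ and $\co(n-1)$ is even, so $\ptdo(n)\equiv0\pmod4$. The case $n=0$ is covered by the convention $\ptdo(0)=0$, since $-2$ is not a square. The main obstacle is this modulo-$4$ sign reconciliation, not the structural argument.
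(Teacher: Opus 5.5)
Your reduction is the paper's argument. You write $\ptdo(n)=2\co(n-1)+\epsilon(n-1)$, match the two supports $3j^2+2j$ and $3j^2+4j+1$ to $3n-2=(3j+1)^2$ and $(3j+2)^2$, and get the parity of $\co(n-1)$ from Proposition~\ref{prop:Co-parity}. That part is all correct.

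The proof breaks at one false step: the claim that $x$ is even exactly when $j$ is odd ``in both cases''. This holds for $x=3j+1$. For $x=3j+2$, however, $x\equiv j\pmod 2$, so $x$ is even exactly when $j$ is even. With the correct parity, the second family works out as follows:
\begin{itemize}
\item for $j$ even, $x$ is even and $\co(n-1)$ is odd, so $\ptdo(n)\equiv 2+1=3\pmod 4$;
\item for $j$ odd, $x$ is odd and $\co(n-1)$ is even, so $\ptdo(n)\equiv 0+(-1)^j=-1\pmod 4$.
\end{itemize}
Thus $\ptdo(n)\equiv-1=\chi_3(x)\pmod 4$ in both subcases. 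Together with your correct computation $\ptdo(n)\equiv 1=\chi_3(x)$ for $x=3j+1$, this is exactly the statement.

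The contradiction you reported is therefore spurious, and the signs $(-1)^j$ in \eqref{eq:Ptdo-theta-coeff} need no reconciling. The small-case check you proposed would have exposed the slip immediately. For $n=2$ one has $n-1=1=3\cdot0^2+4\cdot0+1$, so $j=0$ and $x=2$, giving $\ptdo(2)=2\co(1)+1=3\equiv-1\pmod 4$. This agrees with Table~\ref{tab:initial-values} and contradicts your value $1$.

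As submitted, the proposal ends by suggesting the statement may be false, so it does not yet prove it. Once the parity of $3j+2$ is corrected, it becomes the paper's proof.
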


\begin{corollary}\label{cor:two-nonsquare}
Let $M\ge2$ and $0\le r<M$.  If $3r+1$ is a nonsquare modulo $M$, then
$\co(Mn+r)\equiv0\pmod2$.  If $3r-2$ is a nonsquare modulo $M$, then
$\ptdo(Mn+r)\equiv0\pmod4$.
\end{corollary}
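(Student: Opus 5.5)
The plan is to use the two sparse discrepancy results already established, namely Proposition~\ref{prop:Co-parity} for $\co$ and Corollary~\ref{cor:Ptdo-mod4} for $\ptdo$. Both say that the relevant count is nonzero (mod $2$, resp. mod $4$) only at arguments $n$ for which a specific linear form in $n$ is a perfect square. The nonsquare hypothesis modulo $M$ will then rule out such $n$ throughout the progression $Mn+r$. So the corollary is a local obstruction argument, and I do not expect new $q$-series input.

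For the first claim, take $N=Mn+r$. By Proposition~\ref{prop:Co-parity}, $\co(N)$ is odd only if $3N+1=x^2$ for some integer $x$ (indeed an even square, but we only need that it is a square). Reducing modulo $M$ gives
\[
3N+1=3Mn+3r+1\equiv 3r+1\pmod M,
\]
so $x^2\equiv 3r+1\pmod M$. That contradicts the hypothesis that $3r+1$ is a nonsquare modulo $M$, hence $\co(Mn+r)$ is even for all $n\ge0$.

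For the second claim, take $N=Mn+r$ again, now with Corollary~\ref{cor:Ptdo-mod4}. It says $\ptdo(N)\equiv0\pmod4$ whenever $3N-2$ is not a square. Since $3N-2\equiv 3r-2\pmod M$, a square value $3N-2=x^2$ would make $3r-2$ a square modulo $M$. Two edge cases need a check:
\begin{itemize}
\item The case $3N-2=0$, i.e.\ $x=0$, cannot occur for integer $N$, so the restriction $x>0$ in the corollary is harmless.
\item The value $N=0$ is covered by the convention $\ptdo(0)=0$.
\end{itemize}
Hence $\ptdo(Mn+r)\equiv0\pmod4$.

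I expect no real obstacle. The only point needing care is that the corollary's hypothesis refers to residues, while the earlier results refer to squares of integers. If an integer is a square, its residue modulo $M$ is a square residue, which is exactly the direction the contrapositive needs.
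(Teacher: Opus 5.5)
Your proposal is correct and follows the paper's proof. For $N=Mn+r$ you reduce $3N+1$ and $3N-2$ modulo $M$ and then apply Proposition~\ref{prop:Co-parity} and Corollary~\ref{cor:Ptdo-mod4} in contrapositive form; your checks of the edge cases $x=0$ and $N=0$ are sound details that the paper leaves implicit.
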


The $z=i$ specialization gives the companion of $\nu_3$.  Put
\begin{equation}\label{eq:Cnu-def}
C_\nu(q):=\sum_{m\ge0}\frac{q^m}{(-q;q^2)_{m+1}}.
\end{equation}
Consequently, $qC_\nu(q^2)=S_c(i;q)$ is the signed generating function
for self-conjugate odd corners: a side partition is repeated on the two arms
and contributes the sign $(-1)^{\ell(\lambda)}$.

\begin{proposition}\label{prop:Cnu}
We have
\begin{equation}\label{eq:Cnu}
C_\nu(q)=\sum_{m\ge0}(q^2;q^4)_m q^{2m}=\fO(-q^2).
\end{equation}
\end{proposition}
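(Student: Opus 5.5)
The plan is to prove the two equalities in \eqref{eq:Cnu} in turn. The first converts $C_\nu(q)$ into a finite-product sum by Heine's transformation \eqref{eq:heine}. The second evaluates that sum by the Rogers--Fine corollary \eqref{eq:rogers-fine} in base $q^4$. Ramanujan's identity \eqref{eq:ram-ch9-3} serves as a consistency check, and in fact already gives $C_\nu(q)=\fO(-q^2)$ directly.

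\emph{Step 1: the finite sum.} First write the summand as
\[
 \frac{q^m}{(-q;q^2)_{m+1}}=\frac{1}{1+q}\cdot\frac{(q^2;q^2)_m\,q^m}{(q^2;q^2)_m(-q^3;q^2)_m}.
\]
This is Heine's left side \eqref{eq:heine} in base $q^2$ with $a=q^2$ (so the $(q^2;q^2)_m$ factors cancel), $b\to 0$ and $c=-q^3$, $z=q$. It is cleaner to take $b=0$ in the form of \eqref{eq:heine} with the roles of $a$ and $b$ swapped; the $b$-limit requires checking that the transformation used remains valid. The expected outcome is $\sum_{m\ge0}(q^2;q^4)_m q^{2m}$. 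This is exactly the shape obtained by substituting $q\mapsto q^2$ into the summand of \eqref{eq:rogers-fine} in base $q^4$ with $a=q^2$.

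If the direct Heine specialization does not land there, I will instead use the standard alternative. Expanding $1/(-q;q^2)_{m+1}$ and interchanging sums, or applying the $b=0$ Fine transformation $\sum (q^2;q^2)_m t^m/(c;q^2)_m$, gives an expression whose second iterate, with $t=q$ and $c=-q^3$, produces $(q^2;q^4)_m q^{2m}$. This is the analogue of the computation behind \eqref{eq:Cphi-finite}.

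\emph{Step 2: the theta evaluation.} Apply \eqref{eq:rogers-fine} with base $q^4$ and $a=q^2$:
\[
\sum_{m\ge0}(q^2;q^4)_mq^{2m}=\sum_{n\ge0}(-1)^nq^{2n(3n-1)}q^{6n}(1+q^{4n+2})=\sum_{n\ge0}(-1)^nq^{6n^2+4n}(1+q^{4n+2}).
\]
By \eqref{eq:fO}, replacing $q$ by $-q^2$ turns $q^{3n^2+2n}(1-q^{2n+1})$ into $(-1)^nq^{6n^2+4n}(1+q^{4n+2})$, since $3n^2+2n\equiv n \pmod 2$ and $2n+1$ is odd. Hence the right side equals $\fO(-q^2)$, which matches \eqref{eq:ram-ch9-3}.

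\emph{Main obstacle.} The only delicate point is Step 1: finding the right parameter choice in Heine or Fine so that the denominator $(-q;q^2)_{m+1}$ becomes the numerator $(q^2;q^4)_m$. If this resists, I will establish the first equality indirectly. Both $C_\nu(q)$ (by \eqref{eq:ram-ch9-3}) and $\sum_m(q^2;q^4)_mq^{2m}$ (by Step 2) equal $\fO(-q^2)$, so they are equal to each other. This completes \eqref{eq:Cnu} without needing the direct transformation.
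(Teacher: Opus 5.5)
Your fallback argument, which evaluates $C_\nu(q)$ directly by \eqref{eq:ram-ch9-3}, evaluates $\sum_{m\ge0}(q^2;q^4)_mq^{2m}$ by \eqref{eq:rogers-fine} in base $q^4$ with $a=q^2$ (your Step~2 computation is correct), and then compares the two, is exactly the paper's proof. The Heine detour in Step~1 is unnecessary: with the parameters you name it gives $(q^4;q^4)_\infty\sum_{m\ge0}(q^2;q^4)_mq^{2m}/(q^2;q^2)_m$, which is $T_2(q^2)$ from Lemma~\ref{lem:T2}, not the target sum, so you should drop it and lead with the two evaluations.
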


Finally, define
\begin{equation}\label{eq:Cpsi-def}
C_\psi(q):=\sum_{m\ge1}\frac{q^m}{(q;q^2)_m}.
\end{equation}
This series is included because it has the same one-arm odd denominator as
the $\psi_3$ kernel after linearizing the quadratic weight.  It is the
exceptional member of the dictionary.

\begin{proposition}\label{prop:Cpsi}
We have
\begin{equation}\label{eq:Cpsi}
C_\psi(q)=q\omega_3(q).
\end{equation}
\end{proposition}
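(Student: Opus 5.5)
The plan is to reduce the statement directly to Ramanujan's companion identity \eqref{eq:ram-part3-psi} from Part~III of the Lost Notebook. By the definition \eqref{eq:Cpsi-def}, the left side of \eqref{eq:ram-part3-psi} is literally $C_\psi(q)$. By \eqref{eq:omega3}, the right side is
\[
 q\sum_{n\ge0}\frac{q^{2n(n+1)}}{(q;q^2)_{n+1}^2}=q\,\omega_3(q).
\]
So the proposition is a restatement of that entry, and I would present it that way, citing \cite[p.~60, Eq.~(8.25)]{AB3}.

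If a self-contained derivation is wanted, I would use Heine's transformation \eqref{eq:heine} in base $q^2$. First write
\[
 C_\psi(q)=\frac{q}{1-q}\sum_{n\ge0}\frac{q^n}{(q^3;q^2)_n}.
\]
This is a ${}_2\phi_1$ in base $q^2$ with $a=q^2$ (so that $(q^2;q^2)_n$ cancels the denominator factor), $b\to0$, $c=q^3$, and $z=q$. Apply Heine, or more safely its second iterate, to produce a series whose terms carry $q^{n^2}$-type quadratic exponents. One then has to recognize the result as $\omega_3$.

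The recognition step is the main obstacle. It most naturally goes through the known identity
\[
 \omega_3(q)=\sum_{n\ge0}\frac{q^n}{(q;q^2)_{n+1}},
\]
which is Fine/Watson's representation obtained by the same kind of transformation. Granting it, shifting the index gives
\[
 q\,\omega_3(q)=\sum_{n\ge0}\frac{q^{n+1}}{(q;q^2)_{n+1}}=\sum_{m\ge1}\frac{q^m}{(q;q^2)_m}=C_\psi(q),
\]
which is immediate. I would therefore prove that representation by applying \eqref{eq:heine} with $q\to q^2$, $a=0$, $b=q$, $c=q$, and then use a limiting case to pass to $\sum q^{2n^2+2n}/(q;q^2)_{n+1}^2$. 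In the paper I would simply cite \eqref{eq:ram-part3-psi}.

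Finally, I would remark that, unlike the other rows of Table~\ref{tab:dictionary}, linearization here returns a mock theta function rather than a partial or false theta function. This is what makes $C_\psi$ the exceptional member of the dictionary.
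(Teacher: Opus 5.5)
Your main argument, which reads the two sides of \eqref{eq:ram-part3-psi} against the definitions \eqref{eq:Cpsi-def} and \eqref{eq:omega3}, is exactly the paper's proof and is complete.

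The optional self-contained sketch would not work as written. Heine's transformation in base $q^2$ with $a=0$ and $b=c=q$ collapses to the trivial sum $\sum_{n\ge0}z^n/(q^2;q^2)_n$. A route that does work in one step is the $\alpha=0$ case of the general Rogers--Fine identity, $\sum_{n\ge0}\tau^n/(\beta q;q)_n=\sum_{n\ge0}\beta^n\tau^nq^{n^2}/\bigl((\beta q;q)_n(\tau;q)_{n+1}\bigr)$. Take it in base $q^2$ with $\beta=\tau=q$ and apply it to your expression $\frac{q}{1-q}\sum_{n\ge0}q^n/(q^3;q^2)_n$; this gives $q\omega_3(q)$ directly, and with it Fine's representation $\omega_3(q)=\sum_{n\ge0}q^n/(q;q^2)_{n+1}$.
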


Thus the expected partial- or false-theta companion of $\psi_3$ remains in
the mock-theta world.  The two preceding propositions are recorded to
complete the dictionary; their analytic content follows from the classical
identities in Section~\ref{sec:rank-functions}.  This exception is one reason
to regard the dictionary as a structured parallel rather than a formal
operator on arbitrary $q$-series.

\section{Oriented three-dimensional corners and a three-color companion}
\label{sec:three-dimensional}

We state the results first and give their proofs together in
Section~\ref{sec:proofs-three}.

The obvious extension of a corner to three dimensions has one row, one
column, and one tower.  To make this precise, and to avoid confusion with
solid partitions in the usual order-ideal sense, we retain labels on the
three coordinate arms.

\begin{definition}\label{def:three-arm}
An \emph{oriented three-dimensional corner} is a quadruple
\[
\Gamma=(m;\lambda,\mu,\nu),
\]
where $m\ge0$ and $\lambda,\mu,\nu$ are ordinary partitions whose parts are
at most $m$.  Its weight is
\[
|\Gamma|=m+|\lambda|+|\mu|+|\nu|.
\]
The three partitions occupy labeled coordinate arms; no quotient by a
permutation or rotation is taken.
\end{definition}

Only the following restricted family is used in this paper.  Its row and
column retain the odd-corner structure, while its tower is a distinct-part
decoration.

\begin{definition}\label{def:decorated-three-arm}
A \emph{restricted odd three-dimensional corner} is a tuple
\[
(2m+1;\lambda,\mu,\nu),\qquad m\ge0,
\]
in which every part of $\lambda$ and $\mu$ is odd and at most $2m+1$, while
$\nu$ has distinct parts in $\{1,2,\ldots,2m\}$.  We write $\cthreeo(n)$ for
the number of such objects of weight $n$ and set $\cthreeo(0)=0$.
\end{definition}

The restriction is asymmetric: the row and column form an odd-corner
skeleton, whereas the tower is a bounded distinct-part decoration.  The
orientation is part of the structure.  We select this family because its
finite tower product is precisely the factor occurring in Ramanujan's
identity \eqref{eq:ram-part2-3}; no rotationally invariant solid-partition
interpretation is intended.

\begin{figure}[t]
\centering
\begin{minipage}{0.47\textwidth}
\centering
\begin{tikzpicture}[scale=0.88, every node/.style={font=\small}]
\tikzset{armbox/.style={draw,minimum width=8mm,minimum height=7mm,inner sep=1pt}}
\node[armbox] (m) at (0,0) {$m$};
\node[armbox] at (1.0,0) {$\lambda_1$};
\node[armbox] at (2.0,0) {$\lambda_2$};
\node at (2.85,0) {$\cdots$};
\node[armbox] at (0,-1.0) {$\mu_1$};
\node[armbox] at (0,-2.0) {$\mu_2$};
\node at (0,-2.75) {$\vdots$};
\node[armbox] at (-0.78,0.78) {$\nu_1$};
\node[armbox] at (-1.56,1.56) {$\nu_2$};
\node at (-2.22,2.22) {$\ddots$};
\node[font=\footnotesize] at (1.55,0.58) {$\lambda\text{-arm}$};
\node[font=\footnotesize,anchor=west] at (0.52,-1.55) {$\mu\text{-arm}$};
\node[font=\footnotesize] at (-1.50,2.18) {$\nu\text{-arm}$};
\end{tikzpicture}

\small (a) A labeled three-arm corner.
\end{minipage}
\hfill
\begin{minipage}{0.47\textwidth}
\centering
\begin{tikzpicture}[scale=0.88, every node/.style={font=\small}]
\tikzset{armbox/.style={draw,minimum width=8mm,minimum height=7mm,inner sep=1pt}}
\node[armbox] at (0,0) {$3$};
\node[armbox] at (1.0,0) {$3$};
\node[armbox] at (2.0,0) {$1$};
\node[armbox] at (0,-1.0) {$1$};
\node[armbox] at (-0.78,0.78) {$2$};
\node[armbox] at (-1.56,1.56) {$1$};
\node[font=\footnotesize] at (1.50,0.58) {$\lambda=(3,1)$};
\node[font=\footnotesize,anchor=west] at (0.52,-1.0) {$\mu=(1)$};
\node[font=\footnotesize] at (-1.18,2.15) {$\nu=(2,1)$};
\end{tikzpicture}

\small (b) The decorated object $(3;(3,1),(1),(2,1))$.
\end{minipage}
\caption{Schematic labeled-arm diagrams.  In (b), the first two arms are odd,
the decorated arm is distinct and below the peak, the weight is $11$, and the
side rank is $2-1=1$.}
\label{fig:three-arm}
\end{figure}

For peak $2m+1$, the two odd arms contribute
$1/(q;q^2)_{m+1}^2$, and the distinct tower contributes $(-q;q)_{2m}$.
Hence
\begin{equation}\label{eq:Cto-gf}
C_{3,o}(q):=\sum_{n\ge0}\cthreeo(n)q^n
=\sum_{m\ge0}\frac{(-q;q)_{2m}q^{2m+1}}{(q;q^2)_{m+1}^2}.
\end{equation}
For a restricted odd three-dimensional corner
$\Gamma=(2m+1;\lambda,\mu,\nu)$, define its side rank by
\[
 \operatorname{rank}(\Gamma)=\ell(\lambda)-\ell(\mu).
\]
Let $d(r,n)$ denote the number of such corners of weight $n$ and side rank
$r$.  A rank variable on the two side arms then gives
\begin{equation}\label{eq:Dz-gf}
\mathcal D(z;q):=\sum_{m\ge0}
\frac{(-q;q)_{2m}q^{2m+1}}
{(zq;q^2)_{m+1}(z^{-1}q;q^2)_{m+1}}
=\sum_{n\ge0}\sum_{r\in\mathbb Z}d(r,n)z^rq^n.
\end{equation}
For $\ell\ge2$ and $0\le a<\ell$, let
\begin{equation}\label{eq:rank-residue-counts}
 d_a^{(\ell)}(n):=\sum_{r\equiv a\pmod\ell}d(r,n).
\end{equation}
Interchanging the row and column fixes the class $a=0$ and gives
\[
 d_a^{(\ell)}(n)=d_{\ell-a}^{(\ell)}(n)
 \qquad(1\le a<\ell).
\]
Putting $a=z$ in \eqref{eq:ram-part2-3} yields
\begin{align}
(1+z^{-1})\mathcal D(z;q)
&=\frac{1}{(q;q^2)_\infty(zq;q^2)_\infty(z^{-1}q;q^2)_\infty}
\sum_{j\ge0}(-z)^jq^{\binom{j+1}{2}}
\notag\\
&\quad-\sum_{j\ge0}(-z)^jq^{j(j+1)}.
\label{eq:Dz-ramanujan}
\end{align}
The second term in \eqref{eq:Dz-ramanujan} is the boxed $2$-modular member
of the same family.  Its precise specialization is the following.

\begin{proposition}
\label{prop:three-boxed-correction}
For a formal variable $w$,
\begin{equation}\label{eq:three-boxed-correction}
 \sum_{j\ge0}(-w^2)^jq^{j(j+1)}
 =\mathfrak B_{2,1}\left(w,q^{1/2}\right).
\end{equation}
Consequently, after $z=w^2$, the rank-refined sparse correction in
\eqref{eq:Dz-ramanujan} is precisely the fixed-point contribution of
Burson's boxed $2$-modular involution.
\end{proposition}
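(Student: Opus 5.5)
The plan is to specialize the closed form of Burson's identity \eqref{eq:burson-boxed} at $m=2$, $r=1$, $u=w$, and with $q$ replaced by $q^{1/2}$. The third line of \eqref{eq:burson-boxed} gives
\[
 \mathfrak B_{2,1}(u,q)=\sum_{j\ge0}(-1)^ju^{2j}q^{j(2j+2)}=\sum_{j\ge0}(-1)^ju^{2j}q^{2j(j+1)}.
\]
Replacing $q$ by $q^{1/2}$ turns $q^{2j(j+1)}$ into $q^{j(j+1)}$. Then $(-1)^jw^{2j}=(-w^2)^j$, which yields \eqref{eq:three-boxed-correction} termwise. Only integral powers of $q$ survive, so the identity holds in $\mathbb Z[w][[q]]$ even though the intermediate substitution lives in $\mathbb Z[w][[q^{1/2}]]$, as the conventions of Section~\ref{sec:rank-functions} permit.

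The second step is to check that the substitution $q\mapsto q^{1/2}$ is legitimate on the middle, product-side expression as well. This is needed if one wants the combinatorial statement rather than merely the equality of the two sparse series. For $m=2$, $r=1$ the $k$-th summand is
\[
 \frac{(uq;q^2)_ku^kq^k}{(-uq;q^2)_{k+1}},
\]
which involves only odd powers of $q$ in the factors and $q^k$ outside. After $q\mapsto q^{1/2}$ it becomes
\[
 \frac{(wq^{1/2};q)_kw^kq^{k/2}}{(-wq^{1/2};q)_{k+1}}.
\]
This is a well-defined element of $\mathbb Z[w][[q^{1/2}]]$, and each $k$-term has $q$-order at least $k/2$, so the sum converges formally. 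Equivalently, one simply reweights each pair $(k,\overline\pi)\in\mathcal B_{2,1}(N)$ by $q^{N/2}$.

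For the consequence, we substitute $z=w^2$ in \eqref{eq:Dz-ramanujan}. The correction term $-\sum_j(-z)^jq^{j(j+1)}$ becomes $-\mathfrak B_{2,1}(w,q^{1/2})$. We then invoke the stated properties of Burson's involution: it preserves $N$ and $k+\nu(\overline\pi)$, and it reverses sign off the fixed points \eqref{eq:burson-fixed-pairs}. The signed sum therefore collapses to the fixed pairs $(j,(2j+1)^j)$. Such a pair has $N=j+j(2j+1)=2j(j+1)$ and $u$-exponent $2j$, so under the rescaling it contributes exactly $(-1)^jw^{2j}q^{j(j+1)}=(-z)^jq^{j(j+1)}$. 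Because the rescaling $q\mapsto q^{1/2}$ acts uniformly on $N$, it commutes with the involution, and no new cancellation or fixed points arise.

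I expect no real obstacle. The only point requiring care is bookkeeping: half-integral exponents from nonfixed pairs with odd $N$ cancel in pairs, since the involution preserves $N$, so they cannot spoil the identity in $\mathbb Z[w][[q]]$.
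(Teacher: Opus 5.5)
Your proposal is correct and follows the paper's proof. Like the paper, you specialize the closed form in \eqref{eq:burson-boxed} at $(m,r)=(2,1)$, $u=w$, with $q\mapsto q^{1/2}$, and attribute the fixed-point statement to Burson's pairs $(j,(2j+1)^j)$; your explicit checks of the product side, the weights $2j(j+1)$ and $u$-degree $2j$, and the pairwise cancellation of half-integral exponents are sound details that the paper leaves implicit.
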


The point $z=-1$ is exceptional in \eqref{eq:Dz-ramanujan}, because the
factor $1+z^{-1}$ vanishes.  It nevertheless yields an exact rank identity.
Retain the triangular-number series $G(q)$ from \eqref{eq:gauss}, and put
\begin{equation}\label{eq:G-G1}
 G_1(q):=\sum_{j\ge0}j q^{j(j+1)/2}.
\end{equation}

\begin{theorem}\label{thm:D-parity-specialization}
We have
\begin{align}
 \mathcal D(-1;q)
 &=\frac{G(q^2)}{G(q)}G_1(q)-G_1(q^2)\label{eq:D-parity-specialization}\\
 &=\frac{(q;q^2)_\infty}{(q^2;q^4)_\infty^2}G_1(q)-G_1(q^2).
 \label{eq:D-parity-specialization-product}
\end{align}
\end{theorem}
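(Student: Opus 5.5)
The plan is to obtain $\mathcal D(-1;q)$ from Ramanujan's identity \eqref{eq:Dz-ramanujan} by differentiating in $z$ at $z=-1$, in the manner of l'H\^opital. The left side $(1+z^{-1})\mathcal D(z;q)$ vanishes at $z=-1$, so the information sits in the first derivative there.

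\textbf{Setting up the derivative.} Write the right side as $F(z)=P(z)A(z)-B(z)$, where
\[
P(z)=\frac{1}{(q;q^2)_\infty(zq;q^2)_\infty(z^{-1}q;q^2)_\infty},\quad
A(z)=\sum_{j\ge0}(-z)^jq^{\binom{j+1}{2}},\quad
B(z)=\sum_{j\ge0}(-z)^jq^{j(j+1)}.
\]
For every fixed power of $q$, the coefficients of $\mathcal D(z;q)$, $P(z)$, $A(z)$ and $B(z)$ are Laurent polynomials in $z$. Hence \eqref{eq:Dz-ramanujan} is an identity in $\mathbb Z[z,z^{-1}][[q]]$, and I may differentiate it coefficientwise in $z$ and then set $z=-1$.

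The left side has derivative $-z^{-2}\mathcal D(z;q)+(1+z^{-1})\mathcal D'(z;q)$. At $z=-1$ this equals $-\mathcal D(-1;q)$. On the right, I use three facts:
\begin{itemize}
\item $P(z)=P(z^{-1})$. Differentiating gives $P'(z)=-z^{-2}P'(z^{-1})$, so at the fixed point $z=-1$ of inversion, $P'(-1)=-P'(-1)$ and therefore $P'(-1)=0$.
\item Termwise, $A'(-1)=\sum_j j(-1)^j(-1)^{j-1}q^{\binom{j+1}{2}}=-G_1(q)$.
\item In the same way, $B'(-1)=-G_1(q^2)$.
\end{itemize}
This yields
\[
-\mathcal D(-1;q)=-P(-1)G_1(q)+G_1(q^2),\qquad\text{that is,}\qquad \mathcal D(-1;q)=P(-1)G_1(q)-G_1(q^2).
\]

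\textbf{Identifying $P(-1)$.} It remains to show $P(-1)=G(q^2)/G(q)$. This is exactly the statement $F(-1)=0$, which is forced by the vanishing left side and also checks directly from products. We have $P(-1)=1/\bigl((q;q^2)_\infty(-q;q^2)_\infty^2\bigr)$. By Gauss's identity \eqref{eq:gauss} together with $(q^2;q^2)_\infty=(q^2;q^4)_\infty(q^4;q^4)_\infty$,
\[
\frac{G(q^2)}{G(q)}=\frac{(q^4;q^4)_\infty(q;q^2)_\infty}{(q^2;q^4)_\infty(q^2;q^2)_\infty}=\frac{(q;q^2)_\infty}{(q^2;q^4)_\infty^2}.
\]
The two expressions for $P(-1)$ agree because $(q;q^2)_\infty(-q;q^2)_\infty=(q^2;q^4)_\infty$. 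This gives \eqref{eq:D-parity-specialization}, and the last displayed product is exactly the form in \eqref{eq:D-parity-specialization-product}.

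The only step needing care is justifying the differentiation at the pole-free but degenerate point $z=-1$. I would handle it with the formal Laurent-polynomial-coefficient argument above rather than analytically. The symmetry argument for $P'(-1)=0$ removes the one potentially messy logarithmic-derivative computation.
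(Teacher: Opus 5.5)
Your proposal is correct and follows the paper's proof essentially step for step: differentiate \eqref{eq:Dz-ramanujan} at $z=-1$, use the inversion symmetry of the product factor to make its derivative vanish there, compute $A'(-1)=-G_1(q)$ and $B'(-1)=-G_1(q^2)$, and identify the prefactor through the $z=-1$ specialization. The only differences are that you justify the differentiation coefficientwise in $\mathbb Z[z,z^{-1}][[q]]$ rather than by analyticity, and you also check the prefactor directly from the products; both are harmless variations.
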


By \eqref{eq:rank-residue-counts},
\[
 \mathcal D(-1;q)=\sum_{n\ge0}
 \bigl(d_0^{(2)}(n)-d_1^{(2)}(n)\bigr)q^n.
\]
The next theorem gives a strict bias between the two parity classes.

\begin{theorem}\label{thm:D-parity-bias}
We have
\begin{equation}\label{eq:D-parity-positive}
 -\mathcal D(-1;-q)
 =\sum_{m\ge0}
 \frac{q^{2m+1}(-q^2;q^2)_m}
 {(q;q^2)_m(1-q^{2m+1})^2}.
\end{equation}
Consequently, for every $n\ge1$,
\begin{equation}\label{eq:D-parity-inequality}
 (-1)^{n+1}\bigl(d_0^{(2)}(n)-d_1^{(2)}(n)\bigr)>0.
\end{equation}
Thus even side ranks predominate when $n$ is odd, whereas odd side ranks
predominate when $n$ is even.
\end{theorem}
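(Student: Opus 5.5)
The plan has two parts: prove the identity \eqref{eq:D-parity-positive}, then read off the inequality \eqref{eq:D-parity-inequality}. The inequality is the easy part. Every factor on the right of \eqref{eq:D-parity-positive} is a ratio of products of the form $(1+q^k)$ over $(1-q^k)$, so all coefficients are nonnegative. The $m=0$ term alone is $q/(1-q)^2=\sum_{n\ge1}nq^n$, so every coefficient of $q^n$ with $n\ge1$ is strictly positive. Writing $\mathcal D(-1;q)=\sum_n a_nq^n$ with $a_n=d_0^{(2)}(n)-d_1^{(2)}(n)$, the coefficient of $q^n$ in $-\mathcal D(-1;-q)$ is $(-1)^{n+1}a_n$, which gives \eqref{eq:D-parity-inequality}.

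For the identity, I would first substitute directly into \eqref{eq:Dz-gf}. At $z=-1$ and $q\mapsto -q$ one has $(-q;q)_{2m}\mapsto \prod_{j=1}^{2m}(1-(-q)^j)=(-q;q^2)_m(q^2;q^2)_m$, while $(-q;q^2)_{m+1}\mapsto(q;q^2)_{m+1}$ and $q^{2m+1}\mapsto -q^{2m+1}$. Hence
\[
 -\mathcal D(-1;-q)=\sum_{m\ge0}\frac{q^{2m+1}(-q;q^2)_m(q^2;q^2)_m}{(q;q^2)_{m+1}^2}.
\]
The target summand can be rewritten as $q^{2m+1}(-q^2;q^2)_m(q;q^2)_m/(q;q^2)_{m+1}^2$. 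So the two series differ termwise by exchanging the pair $(-q;q^2)_m(q^2;q^2)_m$ with $(-q^2;q^2)_m(q;q^2)_m$. Termwise equality fails, so a genuine transformation is needed.

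In base $q^2$, both sides are nonterminating ${}_3\phi_2$ series at argument $q^2$, with denominator parameters $q^3,q^3$. The left has numerator parameters $(-q,q^2,q^2)$ and the right has $(-q^2,q,q^2)$. My first attempt would be to strip one factor $1/(1-q^{2m+1})$ using $\frac{1}{1-q^{2m+1}}=\sum_{k\ge0}q^{(2m+1)k}$, or equivalently to split one copy of $(q;q^2)_{m+1}^{-1}$ through the $q$-binomial theorem \eqref{eq:qbinomial}. Interchanging the sums should then produce, for each side, a sum of Heine-type ${}_2\phi_1$ series in base $q^2$. Applying \eqref{eq:heine} to each inner sum should move the parameters $-q$ and $-q^2$ between numerator and denominator and so identify the two double sums.

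As a fallback, I would evaluate the right-hand side of \eqref{eq:D-parity-positive} independently in closed form, by the same route used to prove Theorem~\ref{thm:D-parity-specialization}: a Ramanujan-type identity with a limiting parameter, followed by differentiation in $a$ at $a=-1$ to create the $G_1$ terms. I would then compare the result with \eqref{eq:D-parity-specialization-product} after $q\mapsto -q$. This transformation step is the main obstacle; once it is in place, the positivity argument above is immediate.
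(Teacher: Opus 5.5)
There is a sign error in your substitution step, and everything after it is built on that error. Since $(-q;q)_{2m}=\prod_{j=1}^{2m}(1+q^j)$, replacing $q$ by $-q$ gives $\prod_{j=1}^{2m}\bigl(1+(-q)^j\bigr)$, not $\prod_{j=1}^{2m}\bigl(1-(-q)^j\bigr)$. The product you wrote is the image of $(q;q)_{2m}$. In the correct product, the odd $j$ contribute $(q;q^2)_m$ and the even $j$ contribute $(-q^2;q^2)_m$. Hence
\[
 -\mathcal D(-1;-q)=\sum_{m\ge0}\frac{q^{2m+1}(q;q^2)_m(-q^2;q^2)_m}{(q;q^2)_{m+1}^2}.
\]
Cancelling $(q;q^2)_m$ against $(q;q^2)_{m+1}^2=(q;q^2)_m^2(1-q^{2m+1})^2$ gives \eqref{eq:D-parity-positive} term by term. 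This one-line computation is the paper's entire proof of the identity. The rest of your work agrees with the paper: the denominator $(-q;q^2)_{m+1}^2\mapsto(q;q^2)_{m+1}^2$, the sign from $q^{2m+1}\mapsto-q^{2m+1}$, and your positivity argument for \eqref{eq:D-parity-inequality} are all correct.

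Because of the slip, your conclusion that ``termwise equality fails'' is false, and your main plan cannot work. The series $\sum_{m\ge0}q^{2m+1}(-q;q^2)_m(q^2;q^2)_m/(q;q^2)_{m+1}^2$ is genuinely different from the right side of \eqref{eq:D-parity-positive}. At $q^4$ its coefficient is $4+3=7$, while the right side has $4+1=5$. The value $5$ is the correct one, $(-1)^{5}\bigl(d_0^{(2)}(4)-d_1^{(2)}(4)\bigr)$. Indeed, direct enumeration gives $d_0^{(2)}(4)=1$ and $d_1^{(2)}(4)=6$: the four objects with peak $1$, and two of the three with peak $3$, have odd side rank. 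So no application of \eqref{eq:heine} can identify your two ${}_3\phi_2$ series. Your fallback through Theorem~\ref{thm:D-parity-specialization} does not depend on the error. As stated, however, it is only a sketch, and it replaces a termwise identity with a much harder closed-form evaluation.
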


The right-hand side of \eqref{eq:D-parity-positive} has a direct positive
partition interpretation.  For fixed $m$, it records a distinguished part
$2m+1$, arbitrary odd parts below the peak, distinct even parts not exceeding
$2m$, and two independent colors for additional copies of the peak.

The nonexceptional fourth and third roots of unity give two further exact
rank differences.  Write $T_j=j(j+1)/2$ and define
\begin{align}
 \Phi_4(q)&:=\sum_{m\ge0}(-1)^m q^{T_{2m}}
 =\sum_{m\ge0}(-1)^m q^{m(2m+1)},\label{eq:Phi4}\\
 \Phi_3(q)&:=\sum_{m\ge0}(-1)^m
 \left(q^{T_{3m}}+q^{T_{3m+1}}\right).\label{eq:Phi3}
\end{align}

\begin{theorem}
\label{thm:D-cyclotomic-differences}
Let $\omega=e^{2\pi i/3}$.  Then
\begin{align}
 \mathcal D(i;q)
 &=\sum_{n\ge0}\left(d_0^{(4)}(n)-d_2^{(4)}(n)\right)q^n\notag\\
 &=\frac{\Phi_4(q)}{(q;q^2)_\infty(-q^2;q^4)_\infty}
   -\Phi_4(q^2),\label{eq:D-i-difference}
\end{align}
and
\begin{align}
 \mathcal D(\omega;q)
 &=\sum_{n\ge0}\left(d_0^{(3)}(n)-d_1^{(3)}(n)\right)q^n\notag\\
 &=\frac{\Phi_3(q)}{(q^3;q^6)_\infty}
   -\Phi_3(q^2).\label{eq:D-omega-difference}
\end{align}
\end{theorem}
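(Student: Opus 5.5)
The plan is to specialize Ramanujan's identity \eqref{eq:Dz-ramanujan} at $z=i$ and $z=\omega$, using the symmetry $\mathcal D(z;q)=\mathcal D(z^{-1};q)$ to avoid dividing by $1+z^{-1}$ directly. Write
\[
\Theta(z):=\sum_{j\ge0}(-z)^jq^{T_j},\qquad \Psi(z):=\sum_{j\ge0}(-z)^jq^{j(j+1)},\qquad P(z):=\frac{1}{(q;q^2)_\infty(zq;q^2)_\infty(z^{-1}q;q^2)_\infty}.
\]
Since $P(z)=P(z^{-1})$, the identity \eqref{eq:Dz-ramanujan} together with its image under $z\mapsto z^{-1}$ gives
\[
(2+z+z^{-1})\mathcal D(z;q)=P(z)\bigl(\Theta(z)+\Theta(z^{-1})\bigr)-\bigl(\Psi(z)+\Psi(z^{-1})\bigr).
\]
The left-hand rank-difference expansions follow at once from \eqref{eq:rank-residue-counts} and the symmetry $d_a^{(\ell)}=d_{\ell-a}^{(\ell)}$. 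For example, $\mathcal D(i;q)=\sum(d_0^{(4)}-d_2^{(4)})q^n$ because the classes $1$ and $3$ cancel. Likewise $\mathcal D(\omega;q)=\sum\bigl(d_0^{(3)}+(\omega+\omega^2)d_1^{(3)}\bigr)q^n=\sum(d_0^{(3)}-d_1^{(3)})q^n$.

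\emph{Case $z=i$.} Here $2+z+z^{-1}=2$ and $(iq;q^2)_\infty(-iq;q^2)_\infty=(-q^2;q^4)_\infty$. In $\Theta(i)+\Theta(-i)=\sum_j\bigl((-i)^j+i^j\bigr)q^{T_j}$ the odd $j$ cancel, and $j=2m$ contributes $2(-1)^mq^{T_{2m}}$, so this sum equals $2\Phi_4(q)$. In the same way $\Psi(i)+\Psi(-i)=2\sum_m(-1)^mq^{2m(2m+1)}=2\Phi_4(q^2)$. Dividing by $2$ gives \eqref{eq:D-i-difference} directly.

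\emph{Case $z=\omega$.} Here $2+\omega+\omega^{-1}=1$ and $(q;q^2)_\infty(\omega q;q^2)_\infty(\omega^2q;q^2)_\infty=(q^3;q^6)_\infty$. The coefficient $(-\omega)^j+(-\omega^2)^j$ depends only on $j\bmod 6$ and equals $2,1,-1,-2,-1,1$. Thus
\[
\Theta(\omega)+\Theta(\omega^2)=\sum_{m\ge0}(-1)^m\bigl(2q^{T_{3m}}+q^{T_{3m+1}}-q^{T_{3m+2}}\bigr),
\]
and the same expression with $q\mapsto q^2$ gives $\Psi(\omega)+\Psi(\omega^2)$. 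This is not literally $\Phi_3$. The extra piece is
\[
E(q):=\sum_{m\ge0}(-1)^m\bigl(q^{T_{3m}}-q^{T_{3m+2}}\bigr),
\]
and it must be accounted for: after substituting, the claim \eqref{eq:D-omega-difference} is equivalent to $E(q)/(q^3;q^6)_\infty=E(q^2)$.

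\emph{Main obstacle.} The hardest step will be proving $E(q)=(q^3;q^6)_\infty E(q^2)$. My first attempt is to use the antisymmetric combination, which is also available from the same two identities:
\[
(z^{-1}-z)\mathcal D(z;q)=P(z)\bigl(\Theta(z)-\Theta(z^{-1})\bigr)-\bigl(\Psi(z)-\Psi(z^{-1})\bigr).
\]
At $z=\omega$ one has $z^{-1}-z=-i\sqrt3$, and $\Theta(\omega)-\Theta(\omega^2)$ is $i\sqrt3$ times a signed sum supported on $j\not\equiv0\pmod3$. Equating the two resulting expressions for $\mathcal D(\omega;q)$ gives a linear relation between $E$-type partial theta series and the product $(q^3;q^6)_\infty$. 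I expect that relation to be exactly the needed one, perhaps after applying $T_{-j-1}=T_j$ to fold the index classes $j\equiv2$ and $j\equiv0\pmod 3$ together.

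If that relation does not close, the fallback is to prove $E(q)=(q^3;q^6)_\infty E(q^2)$ independently. One route is to write the bilateral sum $\sum_{n\in\mathbb Z}(-1)^nq^{T_{3n}}$ through Jacobi's triple product \eqref{eq:jacobi-triple}, since $T_{3n}=\tfrac{3n(3n+1)}2$, and then compare the false-theta parts. The other route is to check the identity through a large order and then prove it by the Rogers--Fine identity \eqref{eq:rogers-fine}. The case $z=i$ needs no such auxiliary step, which is a useful consistency check that the symmetric combination is the right device.
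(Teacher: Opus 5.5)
Your reduction is correct, and the step you flag as the main obstacle is immediate. As written the proposal leaves it open, but either of your first two suggestions finishes it in one line.

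\textbf{Closing the obstacle.} Reindexing $n=-m-1$ and using $T_{-3m-3}=T_{3m+2}$ shows that $E(q)$ is exactly the bilateral series $\sum_{n\in\mathbb Z}(-1)^nq^{T_{3n}}$. So there are no separate ``false-theta parts'' to compare. Since $T_{3n}=3\cdot n(3n+1)/2$, Euler's theorem \eqref{eq:euler-pentagonal} with $q\mapsto q^3$ gives
$E(q)=\qinf{q^3}{q^3}=\qinf{q^3}{q^6}\qinf{q^6}{q^6}=\qinf{q^3}{q^6}E(q^2)$.

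\textbf{Your antisymmetric route also works.} It needs no product evaluation at all. It gives $\mathcal D(\omega;q)=F(q)/\qinf{q^3}{q^6}-F(q^2)$, where $F(q)=\sum_{m\ge0}(-1)^m\bigl(q^{T_{3m+1}}+q^{T_{3m+2}}\bigr)$. Because $\Phi_3+E+F=2\Phi_3$, averaging this with your symmetric formula yields \eqref{eq:D-omega-difference} directly. Equivalently, subtracting the two formulas gives $E(q)=\qinf{q^3}{q^6}E(q^2)$.

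\textbf{Comparison with the paper.} The paper does not symmetrize. It divides \eqref{eq:Dz-ramanujan} by $1+z^{-1}$ at $z=i$ and $z=\omega$. In your notation, it then splits $\frac{1+i}{2}\Theta(i)=\Phi_4+\frac{-1+i}{2}\Theta_4$ and $-\omega^2\Theta(\omega)=\Phi_3+\omega\Theta_3$. Here $\Theta_4(q)=\sum_{n\in\mathbb Z}(-1)^nq^{n(2n+1)}$, and $\Theta_3$ is your $E$. It cancels these bilateral terms using the triple-product evaluations $\Theta_4(q)=\qinf{q}{q^2}\qinf{q^4}{q^4}$ and $\Theta_3(q)=\qinf{q^3}{q^3}$, together with the resulting functional equations.

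Your use of $\mathcal D(z;q)=\mathcal D(z^{-1};q)$ amounts to taking coefficientwise real parts of $(1+z^{-1})\mathcal D(z;q)$; the antisymmetric version takes imaginary parts. At $z=i$ this makes the auxiliary theta function disappear entirely. At $z=\omega$ it isolates $\Theta_3$. The antisymmetric companion further shows that the functional equations the paper imports from Jacobi's triple product are already forced by \eqref{eq:Dz-ramanujan} together with the row--column symmetry.

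The paper's route buys a uniform template and explicit product forms for the auxiliary series. Yours is shorter and, via the antisymmetric combination, needs no auxiliary theta identity at all.
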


The cubic identity first gives exact rank equidistribution on the
progression $6n+5$.

\begin{corollary}
\label{cor:cubic-rank-equidistribution}
For every $n\ge0$,
\begin{equation}\label{eq:cubic-equidistribution}
 d_0^{(3)}(6n+5)=d_1^{(3)}(6n+5)=d_2^{(3)}(6n+5)
 =\frac{\cthreeo(6n+5)}{3}.
\end{equation}
Equivalently, the side-rank Laurent polynomial
\[
 \sum_{r\in\mathbb Z}d(r,6n+5)z^r
\]
is divisible by $1+z+z^2$ in $\mathbb Z[z,z^{-1}]$.
\end{corollary}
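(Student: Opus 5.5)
From Theorem~\ref{thm:D-cyclotomic-differences}, the plan is to show that the coefficient of $q^{6n+5}$ in $\mathcal D(\omega;q)$ vanishes. Once that is known, equidistribution follows from two facts about the residue counts. First, the row--column symmetry gives $d_1^{(3)}=d_2^{(3)}$. Second, the coefficient of $q^N$ in $\mathcal D(\omega;q)$ equals $d_0^{(3)}(N)+(\omega+\omega^2)d_1^{(3)}(N)=d_0^{(3)}(N)-d_1^{(3)}(N)$. So vanishing at $N=6n+5$ forces all three classes to be equal. Each then equals $\cthreeo(6n+5)/3$, since $\sum_a d_a^{(3)}(N)=\cthreeo(N)$. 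The divisibility statement is equivalent. A Laurent polynomial $P$ with integer coefficients vanishes at $\omega$ (and hence at $\bar\omega$) if and only if $1+z+z^2$ divides it, because this factor is monic and irreducible. And $P(\omega)=d_0-d_1$ by the computation just given.

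So the substance is to show that both terms of \eqref{eq:D-omega-difference} have no coefficients in degrees $\equiv5\pmod 6$.

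\emph{Second term.} The exponents of $\Phi_3(q)$ are triangular numbers $T_j$ with $j\not\equiv2\pmod3$. Since $T_j\equiv0$ or $1\pmod 3$ in that case, and every doubled exponent is even, $\Phi_3(q^2)$ is supported on even exponents. Hence it contributes nothing in degrees $6n+5$.

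\emph{First term.} The factor $1/(q^3;q^6)_\infty$ is a series in $q^3$, and its exponents are sums of odd multiples of $3$. So it is supported on exponents $3k$ with $k$ counting parts mod $2$; in particular every exponent is $\equiv 0$ or $3\pmod 6$. The exponents $T_{3m}$ and $T_{3m+1}$ of $\Phi_3(q)$ are $\equiv 0$ or $1\pmod 3$. A degree $\equiv5\equiv2\pmod3$ would need an exponent of $\Phi_3$ that is $\equiv2\pmod 3$, since $1/(q^3;q^6)_\infty$ only adds multiples of $3$. No such exponent exists, so the product also has no coefficients in degrees $\equiv 2\pmod 3$.

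Note that the residue-mod-$3$ argument alone kills all degrees $\equiv2\pmod3$ in the first term. For the second term, the odd degrees $\equiv 5\pmod 6$ are killed by parity, which is why the statement is restricted to $6n+5$ rather than all of $3n+2$. I expect no real obstacle: the only care needed is the bookkeeping of $T_{3m}=3m(3m+1)/2\equiv0$ and $T_{3m+1}=(3m+1)(3m+2)/2\equiv1\pmod3$, together with the translation between $\mathcal D(\omega;q)$ and the residue counts.
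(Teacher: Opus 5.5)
Your proposal is correct and follows the paper's proof essentially step for step. Both arguments show that neither term of \eqref{eq:D-omega-difference} has a coefficient in degree $6n+5$, then conclude using $d_1^{(3)}=d_2^{(3)}$ and the fact that $1+z+z^2$ is the minimal polynomial of $\omega$; you track the first term by residues modulo $3$ and the second by parity, where the paper records both supports modulo $6$, but this is only a difference in bookkeeping.
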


The same identity gives a sparse rank difference on the progression
$6n+2$.

\begin{corollary}
\label{cor:cubic-sparse-rank-difference}
If $N\equiv2\pmod6$, then
\begin{equation}\label{eq:cubic-sparse-rank-difference}
 d_0^{(3)}(N)-d_1^{(3)}(N)
 =\begin{cases}
 (-1)^{m+1},&\substack{N=(3m+1)(3m+2)\\ \text{for some }m\ge0},\\
 0,&\text{otherwise.}
 \end{cases}
\end{equation}
\end{corollary}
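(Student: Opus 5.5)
Starting from \eqref{eq:D-omega-difference} in Theorem~\ref{thm:D-cyclotomic-differences}, I will extract the coefficients of $q^N$ with $N\equiv2\pmod 6$. The first task is to find the residues modulo $6$ that each factor can contribute.

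\textbf{The product term.} The series $1/(q^3;q^6)_\infty$ is supported on exponents divisible by $3$, since $1/(q^3;q^6)_\infty\in\mathbb Z[[q^3]]$. The exponents of $\Phi_3(q)$ are the triangular numbers $T_{3m}$ and $T_{3m+1}$, and I will tabulate them modulo $6$ by taking $m$ modulo $4$:
\begin{itemize}
\item $T_{3m}=\tfrac{3m(3m+1)}2$ is always divisible by $3$;
\item $T_{3m+1}=\tfrac{(3m+1)(3m+2)}2\equiv 1\pmod 3$.
\end{itemize}
So every exponent in the first term is $\equiv 0$ or $1\pmod 3$, and never $\equiv2\pmod3$.

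\textbf{Conclusion for the first term.} Because $N\equiv2\pmod6$ forces $N\equiv2\pmod3$, the first term contributes nothing to $q^N$. This residue bookkeeping is the only step needing care.

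\textbf{The term $-\Phi_3(q^2)$.} Its exponents are $2T_{3m}=3m(3m+1)\equiv0\pmod3$ and $2T_{3m+1}=(3m+1)(3m+2)\equiv2\pmod3$. The product $(3m+1)(3m+2)$ of two consecutive integers is even, so these exponents are $\equiv2\pmod6$, while the exponents $3m(3m+1)$ are $\equiv0\pmod 6$ and cannot equal $N$.

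\textbf{Reading off the coefficient.} The only contributions to $q^N$ are therefore the terms $-(-1)^m q^{(3m+1)(3m+2)}=(-1)^{m+1}q^{(3m+1)(3m+2)}$. The map $m\mapsto(3m+1)(3m+2)$ is strictly increasing, so each $N$ arises from at most one $m$. This gives exactly the stated values: $(-1)^{m+1}$ when $N=(3m+1)(3m+2)$, and $0$ otherwise.
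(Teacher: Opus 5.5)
Your proposal is correct and follows the paper's proof essentially step for step. You first eliminate the product term by a residue argument, then read the coefficient of $q^N$ off $-\Phi_3(q^2)$, whose exponents fall into the classes $0$ and $2$ modulo $6$. The differences are cosmetic: you exclude the product term modulo $3$ rather than through the classes $0,1,3,4$ modulo $6$, and you state explicitly that $m\mapsto(3m+1)(3m+2)$ is injective, which the paper leaves implicit. Your remark about tabulating the exponents modulo $6$ with $m$ taken modulo $4$ is never used and can be dropped.
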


Let $\pthdo(n)$ count three-color partitions of $n$, with colors red, blue,
and yellow, and smallest part size $s$, subject to the following conditions:
at least one smallest part is red; the red size $s$ occurs once or twice and
all larger red parts are distinct; the blue parts are distinct and at least
$s$; and the yellow parts are odd and at least $2s-1$.  Set
$\pthdo(0)=0$.  Then
\begin{equation}\label{eq:Pthdo-gf}
\sum_{n\ge0}\pthdo(n)q^n
=\sum_{s\ge1}q^s\frac{(-q^s;q)_\infty^2}{(q^{2s-1};q^2)_\infty}.
\end{equation}
As in the two-color case, this is an analytic companion selected so that a
fixed smallest part produces the product in the transformed corner series.
We next give a parallel positive decomposition.

For $k\ge0$, let $\Gamma=(k;\alpha,\beta)$ be an ordinary corner and let
$2\Gamma=(2k;2\alpha,2\beta)$ be obtained by doubling every entry.  For
$k\ge1$, a \emph{zero-augmented distinct tower below $2k$} is a subset
\[
 \eta\subseteq\{0,1,\ldots,2k-1\}.
\]
For $k=0$, the allowable tower set is understood to be empty and we take
$\eta=\varnothing$, in agreement with $(-1;q)_0=1$.  In all cases, put
$\operatorname{wt}(\eta)=\sum_{t\in\eta}t$; the possible entry $0$ is a
mark, not a positive part.  A \emph{staircase-and-tower-decorated
even corner} is a quadruple
\[
 (2\Gamma,\delta_a,\delta_b,\eta)
\]
of weight
\[
 2|\Gamma|+T_a+T_b+\operatorname{wt}(\eta).
\]
Let $e_{3,\triangle}(N)$ count these objects of weight $N$.  Since the
zero-augmented tower contributes $(-1;q)_{2k}$, their generating function is
\begin{equation}\label{eq:E3triangle-gf}
 E_{3,\triangle}(q):=\sum_{N\ge0}e_{3,\triangle}(N)q^N
 =G(q)^2\sum_{k\ge0}
 \frac{(-1;q)_{2k}q^{2k}}{(q^2;q^2)_k^2}.
\end{equation}

The three-color companion has a positive decomposition parallel to the
two-color decomposition above.

\begin{theorem}
\label{thm:Pthdo-positive-decomposition}
We have
\begin{equation}\label{eq:Pthdo-positive-decomposition}
 \sum_{N\ge0}\pthdo(N+1)q^N
 =2C_{3,o}(q)+E_{3,\triangle}(q).
\end{equation}
Equivalently, for every $N\ge0$,
\begin{equation}\label{eq:Pthdo-positive-coeff}
 \pthdo(N+1)=2\cthreeo(N)+e_{3,\triangle}(N).
\end{equation}
\end{theorem}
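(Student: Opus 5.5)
The plan is to prove \eqref{eq:Pthdo-positive-decomposition} analytically, reusing the template of Theorem~\ref{thm:Ptdo-positive-decomposition}. First normalize the left side. Using $(-q^s;q)_\infty=(-q;q)_\infty/(-q;q)_{s-1}$ and $1/(q^{2s-1};q^2)_\infty=(q;q^2)_{s-1}/(q;q^2)_\infty$, together with Euler's $(-q;q)_\infty=1/(q;q^2)_\infty$, the series \eqref{eq:Pthdo-gf} shifted by one becomes
\[
 \sum_{N\ge0}\pthdo(N+1)q^N=\frac{1}{(q;q^2)_\infty^{3}}\sum_{s\ge0}\frac{(q;q^2)_s\,q^s}{(-q;q)_s^2}.
\]
Next put the right side over the same prefactor. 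Identity \eqref{eq:ram-part2-3} with $a=1$ gives
\[
 2C_{3,o}(q)=\frac{\fTheta(q)}{(q;q^2)_\infty^3}-\fTheta(q^4).
\]
For the decorated even corners, write $(-1;q)_{2k}=2(-q;q)_{2k-1}$ for $k\ge1$ and use \eqref{eq:gauss} in the form $G(q)^2=(q^2;q^2)_\infty^2/(q;q^2)_\infty^2$. This expresses $E_{3,\triangle}(q)$ as an explicit prefactor times a single $k$-sum in base $q^2$.

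The theorem is then equivalent to a single $q$-series identity. It says that $\sum_{s}(q;q^2)_sq^s/(-q;q)_s^2$ equals
\[
 \fTheta(q)-(q;q^2)_\infty^3\fTheta(q^4)+(q;q^2)_\infty^3E_{3,\triangle}(q).
\]
To evaluate the left sum I would first split $s$ by parity, or equivalently factor $(-q;q)_{2k}=(-q;q^2)_k(-q^2;q^2)_k$. This makes every factor a base-$q^2$ Pochhammer symbol. I would then apply Heine's transformation \eqref{eq:heine} in base $q^2$, with $(a,b,c)$ chosen so that one output is the odd-corner kernel $q^{2m+1}(-q;q)_{2m}/(q;q^2)_{m+1}^2$ and the other is the even kernel $q^{2k}(-1;q)_{2k}/(q^2;q^2)_k^2$. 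The natural choice is the one that worked in the two-color case, where the even corner $C(q^2)$ and the odd corner $C_\omega(q)$ emerged from the two parity classes. Any leftover theta-type terms are to be matched with $\fTheta(q)$ and $\fTheta(q^4)$ by means of the Rogers--Fine corollary \eqref{eq:rogers-fine} in base $q^4$.

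As a cross-check, and possibly as a cleaner route, I would also try the combinatorial decomposition. Fix the smallest part $s$ and split on whether the red $s$ occurs once or twice and on the parity of $s$. When $s=2m+1$ is odd, the yellow odd parts $\ge 2s-1$, rescaled against the peak, and the blue distinct parts $\ge s$ should biject onto the two odd arms and the tower $(-q;q)_{2m}$, once from each red multiplicity. This would produce the factor $2$ in $2C_{3,o}$. When $s$ is even, the configurations should map to doubled corners with staircases $\delta_a,\delta_b$ and a zero-augmented tower; the zero mark would record the second red copy.

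I expect the main obstacle to be exactly this splitting: choosing the Heine parameters (or the bijection) so that the even-$s$ terms produce precisely $G(q)^2$ times the $(-1;q)_{2k}$ sum, with no residual theta term. If a direct Heine step fails, my fallback is to compare both sides after multiplying by $(q;q^2)_\infty^3$. I would then verify the resulting identity by the same manipulation that was used to prove \eqref{eq:ram-part2-3}, keeping $a$ general and specializing to $a=1$ only at the end.
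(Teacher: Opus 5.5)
There is a genuine gap. The step that actually produces the two corner families is never specified, and the concrete split you propose cannot produce them.

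Write $Y_1(q)=(-q;q)_\infty^3\sum_{s\ge0}(q;q^2)_sq^s/(-q;q)_s^2$ for your normalized left side; the normalization itself is correct. Splitting by the parity of $s$ is the same as splitting $\pthdo$ by the parity of the smallest part. The pieces with odd and with even smallest part begin $1+3q+7q^2+13q^3+25q^4+\cdots$ and $q+3q^3+3q^4+\cdots$, respectively. By contrast, $2C_{3,o}(q)=2q+4q^2+8q^3+14q^4+\cdots$ and $E_{3,\triangle}(q)=1+2q+3q^2+8q^3+14q^4+\cdots$. Heine's transformation does not change the value of a piece. In either pairing the mismatch is not a sparse theta series: the odd-smallest-part piece minus $2C_{3,o}$ begins $1+q+3q^2+5q^3+11q^4$, and minus $E_{3,\triangle}$ it begins $q+4q^2+5q^3+11q^4$. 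So no choice of parameters makes one piece the odd-corner sum and the other the even-corner sum, even allowing sparse leftovers.

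Your combinatorial sketch fails for the same reason already at $N=0$. The single red part $1$ has odd smallest part, yet it is the object counted by $e_{3,\triangle}(0)=1$, since $\cthreeo(0)=0$.

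The detour through \eqref{eq:ram-part2-3} does not help either. It contains a slip: $\sum_j(-1)^jq^{j(j+1)}=\fTheta(q^2)$, not $\fTheta(q^4)$, so your reduced identity fails at $q^2$. Even when corrected, it only trades a theta-free statement for a relation among $Y_1$, $E_{3,\triangle}$, and theta series that nothing in your plan proves.

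The missing idea is to split on the parity of an auxiliary index rather than of $s$. Using $1/(-q;q)_s=(q;q)_s/(q^2;q^2)_s$, rewrite
\[
 Y_1(q)=\frac{(q^2;q^2)_\infty}{(q;q^2)_\infty}\sum_{s\ge0}\frac{(q;q^2)_s\,q^s}{(q^2;q^2)_s}\cdot\frac{(-q^{s+1};q)_\infty}{(q^{s+1};q)_\infty}.
\]
Expand the last quotient as $\sum_{r\ge0}(-1;q)_rq^{r(s+1)}/(q;q)_r$ by \eqref{eq:qbinomial} with $a=-1$. Then interchange the sums and sum over $s$ by \eqref{eq:qbinomial} in base $q^2$:
\[
 Y_1(q)=\frac{(q^2;q^2)_\infty}{(q;q^2)_\infty}\sum_{r\ge0}\frac{(-1;q)_r\,q^r}{(q;q)_r}\cdot\frac{(q^{r+2};q^2)_\infty}{(q^{r+1};q^2)_\infty}.
\]
For $r=2k+1$, the identities $(q;q)_{2k+1}=(q;q^2)_{k+1}(q^2;q^2)_k$ and $(-1;q)_{2k+1}=2(-q;q)_{2k}$ collapse the summand to $2(-q;q)_{2k}q^{2k+1}/(q;q^2)_{k+1}^2$. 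For $r=2k$, the identity $(q;q)_{2k}=(q;q^2)_k(q^2;q^2)_k$ and \eqref{eq:gauss} give $(-1;q)_{2k}G(q)^2q^{2k}/(q^2;q^2)_k^2$.

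Summing the two parity classes yields $2C_{3,o}(q)+E_{3,\triangle}(q)$ directly, with no theta function, Rogers--Fine step, or Ramanujan identity. This is the paper's route (Lemma~\ref{lem:Cto-split}). It is the same device that separates $C_\omega$ from $E_\triangle$ in the two-color case.
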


Thus the three-color companion splits positively into two labeled copies of
the restricted odd three-dimensional corners and one decorated even-corner
family.  A direct bijection realizing this decomposition, and a direct
sign-reversing involution between the latter family and two labeled copies of
the restricted corners, remain to be found.  Proposition~\ref{prop:three-boxed-correction}
provides the fixed-point model for the sparse correction.

\begin{theorem}\label{thm:Cto-Pthdo}
For $n\ge0$,
\begin{equation}\label{eq:Pthdo-theta-coeff}
\pthdo(n+1)-4\cthreeo(n)=
\begin{cases}
(-1)^j,&n=j(j+1),\\
0,&\text{otherwise.}
\end{cases}
\end{equation}
\end{theorem}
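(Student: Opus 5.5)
We propose to derive the coefficient identity from the generating-function identity
\[
\sum_{N\ge0}\pthdo(N+1)q^N-4C_{3,o}(q)=\sum_{j\ge0}(-1)^jq^{j(j+1)},
\]
and to split it into two inputs already in the paper. First, by Theorem~\ref{thm:Pthdo-positive-decomposition}, the left side equals $E_{3,\triangle}(q)-2C_{3,o}(q)$. Second, we specialize \eqref{eq:Dz-ramanujan} at $z=1$. Since $\mathcal D(1;q)=C_{3,o}(q)$ and $\sum_{j\ge0}(-1)^jq^{\binom{j+1}2}=\fTheta(q)$, this gives
\[
2C_{3,o}(q)=\frac{\fTheta(q)}{(q;q^2)_\infty^3}-\sum_{j\ge0}(-1)^jq^{j(j+1)} .
\]
Substituting, the theorem becomes equivalent to the single product-type evaluation
\[
E_{3,\triangle}(q)=\frac{\fTheta(q)}{(q;q^2)_\infty^3}.
\]
Once this holds, $E_{3,\triangle}-2C_{3,o}$ is exactly $\sum_j(-1)^jq^{j(j+1)}$. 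Reading off coefficients then gives $(-1)^j$ at $n=j(j+1)$ and $0$ elsewhere. We will also note that the $\pthdo(0)=0$ and $\cthreeo(0)=0$ conventions are consistent at $n=0$, where both sides equal $1$.

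To prove the evaluation, we use \eqref{eq:E3triangle-gf} together with \eqref{eq:gauss}, which gives $G(q)^2=(q^2;q^2)_\infty^2/(q;q^2)_\infty^2$. It then suffices to show
\[
\sum_{k\ge0}\frac{(-1;q)_{2k}\,q^{2k}}{(q^2;q^2)_k^2}
=\frac{\fTheta(q)}{(q;q^2)_\infty(q^2;q^2)_\infty^2}.
\]
The key observation is that $(-1;q)_{2k}=(-1;q^2)_k(-q;q^2)_k$. Hence the left side is a ${}_2\phi_1$ in base $q^2$ with $a=-1$, $b=-q$, $c=q^2$, $z=q^2$. We apply Heine's transformation \eqref{eq:heine} in base $q^2$, taking the roles of $a,b$ so that the transformed sum terminates or simplifies (for instance $b=-1$, so that $c/b=-q^2$). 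This produces an infinite-product prefactor times a series of the shape
\[
\sum_{n\ge0}\frac{(-q;q^2)_n\,(\pm q)^n}{(-q^2;q^2)_n}
\]
or a close relative. The next step is to convert that series into a partial theta series. We will try, in order, a second application of Heine, then the Rogers--Fine corollary \eqref{eq:rogers-fine}, and then Ramanujan's \eqref{eq:ram-ch9-1} after the substitution $q\mapsto -q$ or $q^2\mapsto q$. The aim is to land on $\fTheta(q)=\sum(-1)^nq^{n(n+1)/2}$ multiplied by a product that cancels the Heine prefactor against $(q;q^2)_\infty(q^2;q^2)_\infty^2$.

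As a fallback, we will compute the generating function \eqref{eq:Pthdo-gf} directly. Using $(-q^s;q)_\infty^2=(-q;q)_\infty^2/(-q;q)_{s-1}^2$, $(-q;q)_\infty^2=(q;q^2)_\infty^{-2}$ and $1/(q^{2s-1};q^2)_\infty=(q;q^2)_{s-1}/(q;q^2)_\infty$, we get
\[
\sum_{N\ge0}\pthdo(N+1)q^N=\frac1{(q;q^2)_\infty^3}\sum_{n\ge0}\frac{(q;q^2)_n\,q^n}{(-q;q)_n^2}.
\]
It would then suffice to show that this inner sum equals $2\fTheta(q)-(q;q^2)_\infty^3\sum_j(-1)^jq^{j(j+1)}$. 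That is the same analytic content in another form, so the two routes can cross-check each other.

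The main obstacle is the final step: identifying the Heine-transformed series with a partial theta function, with the correct product prefactor. The structural reductions are routine; the risk is choosing the parameter ordering in \eqref{eq:heine} so that the resulting series matches one of Ramanujan's entries exactly rather than a shifted variant. If neither ordering works cleanly, we will instead try to obtain the evaluation as a limiting case $a\to-1$ of \eqref{eq:ram-part2-3}, by differentiating in $a$. The factor $1+a^{-1}$ vanishes at $a=-1$, so the derivative isolates the next-order term, which is where the $(-1;q)_{2k}$ tower (with its zero mark) naturally arises.
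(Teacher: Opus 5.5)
Your reduction is correct. By Theorem~\ref{thm:Pthdo-positive-decomposition} and the case $z=1$ of \eqref{eq:Dz-ramanujan}, the theorem is equivalent to $E_{3,\triangle}(q)=\fTheta(q)/(q;q^2)_\infty^3$, and this evaluation is true. However, it carries all of the analytic content, and your proposal does not prove it. It lists things to try, and the specific suggestions fail as stated.
\begin{itemize}
\item Taking $b=-1$ in \eqref{eq:heine} (base $q^2$) produces $\sum_n(-1)^n(-q^2;q^2)_n/(-q^3;q^2)_n$, whose terms do not tend to $0$. That ordering is unusable.
\item The other ordering, $(a,b,c,z)=(-1,-q,q^2,q^2)$, does give your predicted shape with exactly the right prefactor:
\[
\sum_{k\ge0}\frac{(-1;q)_{2k}q^{2k}}{(q^2;q^2)_k^2}=\frac{1}{(q;q^2)_\infty(q^2;q^2)_\infty^2}\sum_{n\ge0}\frac{(-q;q^2)_n(-q)^n}{(-q^2;q^2)_n}.
\]
So what is missing is $\sum_{n\ge0}(-q;q^2)_n(-q)^n/(-q^2;q^2)_n=\fTheta(q)$.
\item None of your listed tools gives this. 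Identity \eqref{eq:ram-ch9-1} has denominator $(-q;q^2)_{n+1}$ and evaluates to $\fTheta(q^4)$. Formula \eqref{eq:rogers-fine} is only the denominator-free special case of Rogers--Fine.
\item Your last fallback computes a different series. Differentiating \eqref{eq:ram-part2-3} at $a=-1$ yields $\mathcal D(-1;q)$ (Theorem~\ref{thm:D-parity-specialization}), whose denominators are $(-q;q^2)_{m+1}^2$. It is not $E_{3,\triangle}$, whose tower $(-1;q)_{2k}$ sits over $(q^2;q^2)_k^2$.
\end{itemize}

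The gap can be closed with the full Rogers--Fine identity
\[
\sum_{n\ge0}\frac{(\alpha Q;Q)_n}{(\beta Q;Q)_n}t^n
=\sum_{n\ge0}\frac{(\alpha Q;Q)_n(\alpha tQ/\beta;Q)_n\,\beta^nt^nQ^{n^2}(1-\alpha tQ^{2n+1})}{(\beta Q;Q)_n(t;Q)_{n+1}},
\]
taken with $Q=q^2$, $\alpha=-q^{-1}$, $\beta=-1$, $t=-q$. Then $(\alpha tQ/\beta;Q)_n=(-q^2;q^2)_n$ cancels the denominator, and $(\alpha Q;Q)_n/(t;Q)_{n+1}=1/(1+q^{2n+1})$. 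Also $\beta^nt^nQ^{n^2}=q^{2n^2+n}$ and $\alpha tQ^{2n+1}=q^{4n+2}$. The right side therefore collapses to $\sum_{n\ge0}q^{2n^2+n}(1-q^{2n+1})=\fTheta(q)$.

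The paper needs neither this identity nor \eqref{eq:Dz-ramanujan}. It evaluates the signed combination $E_{3,\triangle}-2C_{3,o}$ directly, as the alternating sum $Y_2=\sum_r(-1)^rJ_r$ from the parity splitting in Lemma~\ref{lem:Cto-split}. For $Y_2$, Heine with $(a,b,c,z)=(-q^{1/2},q^{1/2},q,q)$ has prefactor exactly $1$, and \eqref{eq:ram-ch9-1} with $q\mapsto q^{1/2}$ finishes the argument (Lemma~\ref{lem:Y2}). Once completed, your route also gives the product formula for $E_{3,\triangle}$. The cost is invoking Ramanujan's two-parameter identity and one additional transformation.
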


Combining the preceding decomposition with the boxed $2$-modular identity
gives the following two forms of the three-color discrepancy.

\begin{corollary}
\label{cor:three-color-boxed}
For every $N\ge0$,
\begin{align}
 e_{3,\triangle}(N)-2\cthreeo(N)
 &=\pthdo(N+1)-4\cthreeo(N)\notag\\
 &=\sum_{(k,\overline\pi)\in\mathcal B_{2,1}(2N)}
  (-1)^{\nu(\overline\pi)}.
\label{eq:three-color-boxed}
\end{align}
Burson's boxed $2$-modular involution cancels all nonfixed pairs.  A fixed
pair occurs exactly when $N=j(j+1)$, and is then
$(j,(2j+1)^j)$ with sign $(-1)^j$.
\end{corollary}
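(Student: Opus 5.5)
The plan is to read both equalities off results already established, so no new $q$-series manipulation should be needed. The first equality is a rearrangement of Theorem~\ref{thm:Pthdo-positive-decomposition}. By \eqref{eq:Pthdo-positive-coeff}, $\pthdo(N+1)=2\cthreeo(N)+e_{3,\triangle}(N)$. Subtracting $4\cthreeo(N)$ from both sides gives
\[
\pthdo(N+1)-4\cthreeo(N)=e_{3,\triangle}(N)-2\cthreeo(N).
\]
This holds coefficientwise, including $N=0$, where $\cthreeo(0)=0$.

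For the second equality I will compare the middle quantity with the boxed series at $u=1$. By Theorem~\ref{thm:Cto-Pthdo}, $\pthdo(N+1)-4\cthreeo(N)$ is the coefficient of $q^N$ in $\sum_{j\ge0}(-1)^jq^{j(j+1)}$. Specializing \eqref{eq:burson-boxed} with $m=2$, $r=1$, $u=1$ gives
\[
\sum_{N'\ge0}\ \sum_{(k,\overline\pi)\in\mathcal B_{2,1}(N')}(-1)^{\nu(\overline\pi)}q^{N'}
=\mathfrak B_{2,1}(1,q)=\sum_{j\ge0}(-1)^jq^{2j(j+1)}.
\]
Hence the signed sum over $\mathcal B_{2,1}(2N)$ is the coefficient of $q^{2N}$ in $\sum_j(-1)^jq^{2j(j+1)}$. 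That coefficient equals the coefficient of $q^N$ in $\sum_j(-1)^jq^{j(j+1)}$, which proves \eqref{eq:three-color-boxed}. Equivalently, one can invoke Proposition~\ref{prop:three-boxed-correction} with $w=1$.

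The one point needing care is formal: $\mathfrak B_{2,1}(1,q^{1/2})$ lives in $\mathbb Z[[q^{1/2}]]$. Individual pairs in $\mathcal B_{2,1}(N')$ with $N'$ odd do exist, so these sets are not empty. However, their total signed sums vanish, because the right-hand side of \eqref{eq:burson-boxed} has only even exponents $2j(j+1)$. Extracting the coefficient of $q^{2N}$ in $\mathfrak B_{2,1}(1,q)$, rather than of $q^N$ in $\mathfrak B_{2,1}(1,q^{1/2})$, avoids any half-integer bookkeeping.

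Finally, the fixed-point statement follows from Burson's involution as recalled around \eqref{eq:burson-fixed-pairs}. The involution preserves $N'$ and reverses the sign on nonfixed pairs, so within each $\mathcal B_{2,1}(2N)$ all nonfixed pairs cancel. The fixed pairs are $(j,(2j+1)^j)$, whose weight is $rk+|\overline\pi|=j+j(2j+1)=2j(j+1)$. Such a pair lies in $\mathcal B_{2,1}(2N)$ exactly when $N=j(j+1)$, and since $\nu=j$ its sign is $(-1)^j$. The map $j\mapsto j(j+1)$ is injective, so at most one fixed pair survives for each $N$, matching \eqref{eq:Pthdo-theta-coeff}. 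I expect no real obstacle here; the only subtle step is the parity-of-exponent remark above.
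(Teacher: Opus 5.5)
Your proposal is correct and follows the paper's proof. In both, the first equality is a rearrangement of \eqref{eq:Pthdo-positive-coeff}, and the second comes from extracting the coefficient of $q^{2N}$ in $\mathfrak B_{2,1}(1,q)=\sum_{j\ge0}(-1)^jq^{2j(j+1)}$ and matching it with Theorem~\ref{thm:Cto-Pthdo}, with the fixed pairs $(j,(2j+1)^j)$ of weight $2j(j+1)$ and sign $(-1)^j$ accounting for the surviving terms.
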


\begin{corollary}\label{cor:Pthdo-mod4}
Let $\chi_4$ be the nontrivial character modulo $4$.  For $n\ge0$,
\[
\pthdo(n)\equiv
\begin{cases}
\chi_4(x),&4n-3=x^2\text{ for some }x>0,\\
0,&4n-3\text{ is not a square}
\end{cases}
\pmod4.
\]
\end{corollary}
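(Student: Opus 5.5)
The plan is to read this off directly from Theorem~\ref{thm:Cto-Pthdo} by reducing modulo $4$ and reparametrizing the pronic exponents $j(j+1)$ as odd squares; no new analytic input should be needed.

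\emph{Step 1 (the case $n=0$).} We have $\pthdo(0)=0$ by definition. Since $4\cdot0-3=-3$ is not a square, the asserted congruence reads $0\equiv0$ and holds.

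\emph{Step 2 (reduction for $n\ge1$).} Apply Theorem~\ref{thm:Cto-Pthdo} with $n-1\ge0$ in place of $n$. Reducing \eqref{eq:Pthdo-theta-coeff} modulo $4$ kills the term $4\cthreeo(n-1)$, so
\[
\pthdo(n)\equiv
\begin{cases}
(-1)^j\pmod 4,& n-1=j(j+1)\text{ for some }j\ge0,\\
0\pmod 4,&\text{otherwise.}
\end{cases}
\]
The map $j\mapsto j(j+1)$ is strictly increasing on $j\ge0$, so at most one $j$ occurs and the case split is well defined.

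\emph{Step 3 (translating the condition).} I would check that, for $n\ge1$,
\[
n-1=j(j+1)\text{ for some }j\ge0
\iff
4n-3=x^2\text{ for some }x>0,
\]
with $x=2j+1$.
\begin{itemize}
\item[($\Rightarrow$)] If $n-1=j(j+1)$, then $4n-3=4j^2+4j+1=(2j+1)^2$.
\item[($\Leftarrow$)] If $4n-3=x^2$ with $x>0$, then $x^2$ is odd, so $x$ is odd. Write $x=2j+1$ with $j\ge0$; reversing the computation gives $n-1=j(j+1)$.
\end{itemize}
This correspondence between $j$ and $x$ is bijective.

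\emph{Step 4 (matching the sign).} Under this correspondence, $\chi_4(x)=\chi_4(2j+1)=(-1)^j$, because $2j+1\equiv1$ or $3\pmod4$ according as $j$ is even or odd. Substituting into the display of Step~2 gives exactly the stated congruence.

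Every step is routine. The only point requiring care is the bookkeeping: the shift $n\mapsto n-1$ between the indexing of Theorem~\ref{thm:Cto-Pthdo} and that of this corollary, and the separate treatment of $n=0$, where the theorem does not apply.
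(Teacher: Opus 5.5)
Your proposal is correct and matches the paper's proof step for step. Both arguments treat $n=0$ via the convention $\pthdo(0)=0$, reduce Theorem~\ref{thm:Cto-Pthdo} modulo $4$ after the shift $n\mapsto n-1$, and then reparametrize $n-1=j(j+1)$ as $4n-3=(2j+1)^2$ with $\chi_4(2j+1)=(-1)^j$.
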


The cubic rank difference also gives the following congruence dissection.

\begin{corollary}
\label{cor:cubic-congruence-dissection}
If $N\equiv2\pmod6$, then
\begin{equation}\label{eq:Cto-cubic-mod3}
 \cthreeo(N)\equiv
 \begin{cases}
 (-1)^{m+1},&\substack{N=(3m+1)(3m+2)\\ \text{for some }m\ge0},\\
 0,&\text{otherwise}
 \end{cases}
 \pmod3.
\end{equation}
If $N\equiv3\pmod6$, then
\begin{equation}\label{eq:Pthdo-cubic-mod12}
 \pthdo(N)\equiv
 \begin{cases}
 5(-1)^{m+1},&\substack{N-1=(3m+1)(3m+2)\\ \text{for some }m\ge0},\\
 0,&\text{otherwise}
 \end{cases}
 \pmod{12}.
\end{equation}
\end{corollary}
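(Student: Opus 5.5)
The plan is to reduce both congruences to results already stated: Corollary~\ref{cor:cubic-sparse-rank-difference} for the rank differences on $6n+2$, and Theorem~\ref{thm:Cto-Pthdo} for the three-color discrepancy. The only tool needed beyond these is the row--column symmetry $d_1^{(3)}(n)=d_2^{(3)}(n)$ noted after \eqref{eq:rank-residue-counts}.

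For \eqref{eq:Cto-cubic-mod3}, I will split $\cthreeo(N)$ over side-rank classes modulo $3$ and use the symmetry $d_2^{(3)}=d_1^{(3)}$:
\[
 \cthreeo(N)=d_0^{(3)}(N)+d_1^{(3)}(N)+d_2^{(3)}(N)
 =\bigl(d_0^{(3)}(N)-d_1^{(3)}(N)\bigr)+3d_1^{(3)}(N).
\]
Hence $\cthreeo(N)\equiv d_0^{(3)}(N)-d_1^{(3)}(N)\pmod 3$. For $N\equiv2\pmod 6$, Corollary~\ref{cor:cubic-sparse-rank-difference} evaluates the right side exactly, which gives \eqref{eq:Cto-cubic-mod3}.

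For \eqref{eq:Pthdo-cubic-mod12}, I will write $N=n+1$, so that $n\equiv2\pmod 6$. Theorem~\ref{thm:Cto-Pthdo} gives
\[
 \pthdo(N)=4\cthreeo(n)+\varepsilon(n),
\]
where $\varepsilon(n)=(-1)^j$ if $n=j(j+1)$ and $\varepsilon(n)=0$ otherwise. Since $4x \bmod 12$ depends only on $x\bmod 3$, the first part determines $4\cthreeo(n)$ modulo $12$. It is $4(-1)^{m+1}$ when $n=(3m+1)(3m+2)$, and $0$ otherwise.

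The remaining step is to locate the oblong numbers $j(j+1)$ that lie in the class $2\pmod 6$. Checking $j$ modulo $6$, the values of $j(j+1)$ are $0,2,0,0,2,0$. So $j(j+1)\equiv2\pmod 6$ exactly when $j\equiv1\pmod 3$, that is, when $j=3m+1$ and $n=(3m+1)(3m+2)$. In that case $\varepsilon(n)=(-1)^{3m+1}=(-1)^{m+1}$. Therefore the two sparse contributions are supported on the same sequence. They combine to $4(-1)^{m+1}+(-1)^{m+1}=5(-1)^{m+1}$ there, and both vanish off it, which is \eqref{eq:Pthdo-cubic-mod12}.

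I do not expect any step to be a real obstacle. The only point requiring care is checking that the support of the theta discrepancy in Theorem~\ref{thm:Cto-Pthdo}, restricted to $n\equiv2\pmod6$, coincides exactly with the quadratic sequence in Corollary~\ref{cor:cubic-sparse-rank-difference}, including the matching of signs.
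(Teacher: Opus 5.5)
Your proposal is correct and follows essentially the same route as the paper. The symmetry $d_1^{(3)}=d_2^{(3)}$ reduces $\cthreeo(N)$ modulo $3$ to the rank difference evaluated in Corollary~\ref{cor:cubic-sparse-rank-difference}. Theorem~\ref{thm:Cto-Pthdo}, together with the observation that the numbers $j(j+1)\equiv2\pmod6$ are exactly $(3m+1)(3m+2)$ with matching sign $(-1)^{m+1}$, then yields the modulo $12$ statement.
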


\FloatBarrier
For reference, Table~\ref{tab:initial-values} lists the initial values of the
three less familiar counting functions and fixes the shifts used in the two
colored identities.
\begin{table}[!ht]
\centering
\caption{Initial values of the three less familiar counting functions.}
\label{tab:initial-values}
\begin{tabular}{c@{\quad}rrrrrrrr}
\toprule
$n$&0&1&2&3&4&5&6&7\\
\midrule
$\cthreeo(n)$&0&1&2&4&7&12&21&34\\
$\ptdo(n)$&0&1&3&4&8&12&17&28\\
$\pthdo(n)$&0&1&4&7&16&28&48&85\\
\bottomrule
\end{tabular}
\end{table}
\FloatBarrier

\section{Arithmetic of the sparse supports}
\label{sec:arithmetic}

The congruences in this section arise by excluding the support of the sparse
theta corrections.  We retain the notation
\[
T_j=\frac{j(j+1)}2
\]
and, for $M\ge2$, put
\begin{align*}
S_M&=\{j(j+1)\pmod M:j\ge0\},\\
R_M&=\{T_{2a+1}+T_b\pmod M:a,b\ge0\}.
\end{align*}
The proofs are given in Section~\ref{sec:proofs-arithmetic}.

We first isolate the two support-exclusion mechanisms used below.

\begin{theorem}\label{thm:Cto-support-congruences}
Let $M\ge2$ and $0\le r<M$.
\begin{enumerate}
\item If $r\notin R_M$, then, for every $n\ge0$,
\[
\cthreeo(Mn+r)\equiv0\pmod2,
\qquad
\pthdo(Mn+r+1)\equiv0\pmod8.
\]
\item If $3\mid M$, $r\equiv2\pmod3$, and $r\notin S_M$, then, for every
$n\ge0$,
\[
\cthreeo(Mn+r)\equiv0\pmod3,
\qquad
\pthdo(Mn+r+1)\equiv0\pmod{12}.
\]
\end{enumerate}
If both hypotheses hold, then, for every $n\ge0$,
\[
 \cthreeo(Mn+r)\equiv0\pmod6,
 \qquad
 \pthdo(Mn+r+1)\equiv0\pmod{24}.
\]
\end{theorem}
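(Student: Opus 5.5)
The plan is to derive every congruence from exact identities already established: Theorem~\ref{thm:D-parity-specialization}, Theorem~\ref{thm:Cto-Pthdo} and the cubic Corollaries~\ref{cor:cubic-rank-equidistribution} and~\ref{cor:cubic-congruence-dissection}. Each such identity expresses $\cthreeo$ or $\pthdo$, modulo the relevant integer, as a sparse series. The remaining work is a support-inclusion argument: show that the exponents of that sparse series reduce modulo $M$ into $R_M$ (respectively $S_M$).

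For part (1), I would first compute $C_{3,o}(q)=\mathcal D(1;q)$ modulo $2$ via $\mathcal D(-1;q)$, since $z^r\equiv(-z)^r$ coefficientwise mod $2$. In $\mathbb F_2[[q]]$ we have $G(q^2)=G(q)^2$, so $G(q^2)/G(q)\equiv G(q)$. Also $G_1(q)\equiv\sum_{j\ \mathrm{odd}}q^{T_j}$, and hence $G_1(q^2)\equiv G_1(q)^2$. Then \eqref{eq:D-parity-specialization} gives
\[
 C_{3,o}(q)\equiv G_1(q)\bigl(G(q)+G_1(q)\bigr)
 \equiv\sum_{a,b\ge0}q^{T_{2a+1}+T_{2b}}\pmod 2 .
\]
Every exponent here lies in $R_M$ modulo $M$, so $r\notin R_M$ forces $\cthreeo(Mn+r)\equiv0\pmod2$. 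For $\pthdo$, Theorem~\ref{thm:Cto-Pthdo} gives $\pthdo(N+1)=4\cthreeo(N)+\varepsilon(N)$, where $\varepsilon$ is supported on $N=j(j+1)$. The term $4\cthreeo(N)$ is then $0\pmod8$, so it suffices to show $j(j+1)\bmod M\in R_M$. For this I would use that $T_x\bmod M$ depends only on $x\bmod 2M$ and that $T_{-1-x}=T_x$. Exactly one of $j$ and $-1-j$ is odd, so one can choose $a$ with $2a+1\equiv j$ or $2a+1\equiv -1-j\pmod{2M}$. Taking $b=j$ then gives $T_{2a+1}+T_b\equiv 2T_j=j(j+1)\pmod M$. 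Hence $\varepsilon(Mn+r)=0$ whenever $r\notin R_M$.

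For part (2), the hypotheses $3\mid M$ and $r\equiv2\pmod3$ give $N=Mn+r\equiv2\pmod3$, so $N\equiv2$ or $5\pmod6$; I split on this. If $N\equiv5\pmod6$, Corollary~\ref{cor:cubic-rank-equidistribution} gives $3\mid\cthreeo(N)$. Also $N$ is odd, so $N\ne j(j+1)$ and $\varepsilon(N)=0$; hence $\pthdo(N+1)=4\cthreeo(N)\equiv0\pmod{12}$. If $N\equiv2\pmod6$, then \eqref{eq:Cto-cubic-mod3} makes $\cthreeo(N)\equiv0\pmod3$ unless $N=(3m+1)(3m+2)$. That value is of the form $j(j+1)$ and so lies in $S_M$ modulo $M$, which is excluded since $r\notin S_M$. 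The same exclusion gives $\varepsilon(N)=0$, so $\pthdo(N+1)=4\cthreeo(N)\equiv0\pmod{12}$; alternatively, \eqref{eq:Pthdo-cubic-mod12} gives this directly.

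The combined statement follows from the Chinese remainder theorem: moduli $2$ and $3$ give $6$, and moduli $8$ and $12$ give $\operatorname{lcm}=24$. The main obstacle is the first step of part (1), the mod-$2$ reduction of \eqref{eq:D-parity-specialization}. Two points need care there. First, the division by $G(q)$ is legitimate in $\mathbb F_2[[q]]$ because $G$ has constant term $1$. Second, the resulting support must land inside $R_M$ and not merely inside some larger set. The rest is bookkeeping with residues of triangular numbers.
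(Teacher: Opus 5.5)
Your proof is correct, but it takes a different route from the paper's. The paper never uses the rank variable in this proof. It starts from the single identity $2C_{3,o}(q)=A(q)/P(q)^3-B(q)$, obtained from \eqref{eq:ram-part2-3} at $a=1$, where $P(q)=(q;q^2)_\infty$, $A(q)=\sum_{j\ge0}(-1)^jq^{T_j}$ and $B(q)=\sum_{j\ge0}(-1)^jq^{j(j+1)}$. For the modulus $3$ it uses $P(q)^3\equiv(q^3;q^6)_\infty\pmod 3$. Hence $A/P^3$ vanishes modulo $3$ in every degree $\equiv2\pmod3$, uniformly and without your split into $6n+2$ and $6n+5$. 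For the modulus $2$, the factor $2$ forces the paper to work modulo $4$. There it shows $A/P^3\equiv U(q^2)-2V(q)U(q)\pmod4$, with $U=G$ and $V$ its odd-index subseries, and then excludes the supports of $U(q^2)$ and $B$ by Lemma~\ref{lem:S-in-R}.

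You instead reduce the root-of-unity specializations of $\mathcal D(z;q)$. You take $\mathcal D(-1;q)$ modulo $2$ via Theorem~\ref{thm:D-parity-specialization}, and $\mathcal D(\omega;q)$ modulo $3$ via Corollaries~\ref{cor:cubic-rank-equidistribution} and~\ref{cor:cubic-congruence-dissection}.

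Your route gives an explicit parity formula, $\cthreeo(N)\equiv\#\{(a,b):N=T_{2a+1}+T_{2b}\}\pmod 2$, whose support lies visibly inside $R_M$. So no mod-$4$ bookkeeping is needed, and Lemma~\ref{lem:S-in-R} (which you reprove) is used only for the correction $\varepsilon$. Your route also ties the congruences directly to the rank identities. The paper's mod-$4$ formula reduces to the same parity formula, since $U(q^2)-B(q)=2\sum_{j\ \mathrm{odd}}q^{j(j+1)}$ and $V(q^2)\equiv V(q)^2\pmod 2$.

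The paper's route, in exchange, is self-contained from one Ramanujan identity. It does not depend on the derivative computation at $z=-1$ or on the cubic theta-product identities.
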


We now give the complete local description of the binary triangular
support.

\begin{theorem}\label{thm:all-modulus-support}
Let $M\ge2$ and $0\le r<M$.  Then
\[
\begin{aligned}
r\notin R_M
\quad\Longleftrightarrow\quad
&\text{there exists }p^\alpha\parallel M,\ p\equiv3\pmod4,\\
&\text{such that }v_p(4r+1)\text{ is odd and }v_p(4r+1)<\alpha.
\end{aligned}
\]
In particular, $R_M$ is the full residue system if and only if every prime
$p\equiv3\pmod4$ occurs in $M$ with exponent at most $1$.
\end{theorem}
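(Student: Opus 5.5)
Throughout, put $T_j=j(j+1)/2$. The plan is to convert membership in $R_M$ into a question about sums of two odd squares, split that question prime by prime with the Chinese remainder theorem, and settle each prime power separately.

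\textbf{Step 1: reduction to sums of two odd squares.} Since $8T_j+1=(2j+1)^2$, we have
\[
8\bigl(T_{2a+1}+T_b\bigr)+2=(4a+3)^2+(2b+1)^2.
\]
As $a$ runs over $a\ge0$, the integers $4a+3$ and $-(4a+3)$ together cover every odd integer, so $(4a+3)^2$ takes every odd square value. Hence $r\in R_M$ if and only if
\[
x^2+y^2\equiv 8r+2\pmod{8M}\quad\text{for some odd integers }x,y.
\]
Write $M=2^kM'$ with $M'$ odd. By the Chinese remainder theorem this splits into one congruence modulo $2^{k+3}$ and one modulo each $p^\alpha\parallel M'$. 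Oddness of $x$ and $y$ is a condition only at the prime $2$. So at each odd prime we may choose arbitrary integer solutions and then glue them to the odd solutions found at $2$.

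\textbf{Step 2: the prime $2$ imposes nothing.} Every odd residue that is $\equiv1\pmod 8$ is an odd square modulo $2^{k+3}$. Taking $y=1$, the congruence becomes $x^2\equiv 8r+1\pmod{2^{k+3}}$, which is always solvable with $x$ odd.

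\textbf{Step 3: odd primes.} Fix $p^\alpha\parallel M$ with $p$ odd. We need to know when $2(4r+1)$ is a sum of two squares modulo $p^\alpha$. Since $2=1^2+1^2$, the two-square identity shows that representability is unchanged under multiplication by the unit $2$. So the question is whether $n=4r+1$ is a sum of two squares modulo $p^\alpha$. Note that $r\mapsto 4r+1$ is a bijection on residues mod $p^\alpha$.

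For $p\equiv1\pmod4$ there is $i$ with $i^2\equiv-1\pmod{p^\alpha}$. Then $u^2+v^2\equiv(u+iv)(u-iv)$, and solving $u+iv\equiv n$, $u-iv\equiv1$ (possible because $2$ is invertible) represents every $n$.

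For $p\equiv3\pmod4$, the residue $-1$ is a nonsquare modulo $p$. Suppose $v_p(n)=e<\alpha$ and $u^2+v^2\equiv n$. Let $t=\min(v_p(u),v_p(v))$. If $2t\ge\alpha$ then $n\equiv0$, contradicting $e<\alpha$. Otherwise $p^{-2t}(u^2+v^2)$ is a sum of two squares with at least one term a unit, which is a unit modulo $p$ because $-1$ is a nonsquare. Hence $e=2t$ is even. Conversely, if $e$ is even then $n=p^{e}m$ with $m$ a unit. Every unit mod $p$ is a sum of two squares (standard pigeonhole count), and this lifts by Hensel's lemma since $p$ is odd and one square is a unit; multiplying by $(p^{e/2})^2$ represents $n$. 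If $e\ge\alpha$, then $n\equiv0=0^2+0^2$.

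\textbf{Step 4: assemble.} Combining Steps 1–3, $r\notin R_M$ exactly when some $p^\alpha\parallel M$ with $p\equiv3\pmod4$ has $v_p(4r+1)$ odd and less than $\alpha$.

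For the final assertion: if every such $p$ has $\alpha\le1$, then an odd value $v_p(4r+1)<\alpha$ is impossible, so $R_M$ is the full residue system. If some such $p$ has $\alpha\ge2$, choose $r$ with $4r+1\equiv p\pmod{p^\alpha}$. This is possible because $4$ is invertible mod $p^\alpha$, and it gives $v_p(4r+1)=1<\alpha$, so $r\notin R_M$.

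The main obstacle is the $p\equiv3\pmod4$ analysis in Step 3: both the valuation argument and the Hensel lift must be carried out carefully to cover nonunit $n$ and the truncation at $p^\alpha$.
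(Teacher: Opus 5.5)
\textbf{The slip in Step 1.} One sentence there is false as written. The integers $\pm(4a+3)$ with $a\ge0$ do not cover every odd integer; they miss $\pm1,\pm5,\pm9,\dots$. So $(4a+3)^2$ does not take every odd square value: $1$ and $25$ are never attained.

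What your equivalence actually needs is the modular statement: for every odd $x$ there is $a\ge0$ with $(4a+3)^2\equiv x^2\pmod{8M}$. This is true because $x^2\bmod 8M$ is unchanged under $x\mapsto -x$ and under $x\mapsto x+4M$, since $(x+4M)^2-x^2=8M(x+2M)$. So you may first arrange $x\equiv3\pmod4$ and then make $x$ positive. Equivalently, choose the $2$-adic component of $x$ with $x\equiv3\pmod4$ when you glue, and take the least positive representative modulo $8M$. With this repair Step~1 holds, and the rest of your proof is correct.

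\textbf{Comparison with the paper.} Your ingredients are the paper's; what differs is how the local--global step is organized.
\begin{itemize}
\item The paper applies the Chinese remainder theorem to $T_{2a+1}+T_b$ directly (Lemma~\ref{lem:R-local-global}). It handles $a$ through the polynomial $2a^2+3a+1$, and $b$ through the period $2M_i$ of $T_b$ together with a parity-matching adjustment. It passes to sums of two squares only at odd prime powers, where $8$ is invertible.
\item You pass to sums of two odd squares globally modulo $8M$ first. The gluing then becomes the routine CRT for a polynomial congruence, with oddness absorbed into the component modulo $2^{k+3}$. You avoid the parity-matching lemma, at the cost of the coverage point above, which the paper's formulation never has to address.
\item Your prime-$2$ step ($y=1$, $x^2\equiv8r+1\pmod{2^{k+3}}$) is the paper's surjectivity of triangular numbers modulo $2^\alpha$ (Lemma~\ref{lem:R-two-power}), with the roles of the two summands exchanged.
\item Your odd-prime analysis is Lemma~\ref{lem:two-squares-prime-powers}. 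The only cosmetic difference is that you strip the unit $2$ using $2=1^2+1^2$, whereas the paper simply notes $v_p(8r+2)=v_p(4r+1)$.
\end{itemize}
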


The preceding two theorems give the following concrete prime-power family.

\begin{corollary}\label{cor:Cto-prime-powers}
Let $p\equiv3\pmod4$ be prime, $\alpha\ge2$, and
$0\le r<p^\alpha$.  If $v_p(4r+1)$ is odd and less than $\alpha$, then,
for every $n\ge0$,
\[
\cthreeo(p^\alpha n+r)\equiv0\pmod2,
\qquad
\pthdo(p^\alpha n+r+1)\equiv0\pmod8.
\]
For $p=3$, the moduli strengthen to $6$ and $24$, respectively.
\end{corollary}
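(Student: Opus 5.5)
This corollary should follow directly from Theorem~\ref{thm:all-modulus-support} and Theorem~\ref{thm:Cto-support-congruences}; no new analytic input is needed.

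Take $M=p^\alpha$. Then the only prime power exactly dividing $M$ is $p^\alpha$ itself, and $p\equiv3\pmod4$. By hypothesis $v_p(4r+1)$ is odd and less than $\alpha$, so the criterion of Theorem~\ref{thm:all-modulus-support} gives $r\notin R_{p^\alpha}$. Part~(1) of Theorem~\ref{thm:Cto-support-congruences} then yields $\cthreeo(p^\alpha n+r)\equiv0\pmod2$ and $\pthdo(p^\alpha n+r+1)\equiv0\pmod8$ for all $n\ge0$.

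For $p=3$ we want part~(2) of Theorem~\ref{thm:Cto-support-congruences} as well; once it holds, the combined statement there gives the moduli $6$ and $24$. Clearly $3\mid M$, so two conditions remain.

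First, $r\equiv2\pmod3$. Since $v_3(4r+1)$ is odd, it is at least $1$, so $3\mid 4r+1$. Hence $r\equiv -1\equiv 2\pmod 3$.

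Second, $r\notin S_M$. Suppose $r\equiv j(j+1)\pmod{3^\alpha}$. Then $4r+1\equiv(2j+1)^2\pmod{3^\alpha}$. Set $v=v_3(4r+1)$, which is odd with $v<\alpha$.
\begin{itemize}
\item If $3\nmid 2j+1$, then $v_3\bigl((2j+1)^2\bigr)=0$. Since $v<\alpha$, the congruence forces $v_3(4r+1)=0$, contradicting $v\ge1$.
\item If $3\mid 2j+1$, then $v_3\bigl((2j+1)^2\bigr)=2t$ is even. If $2t<\alpha$, the congruence modulo $3^\alpha$ forces $v=2t$, which is even, a contradiction. If $2t\ge\alpha$, then $3^\alpha\mid 4r+1$, so $v\ge\alpha$, again a contradiction.
\end{itemize}
The general principle is that if $x\equiv y\pmod{3^\alpha}$ and $\min(v_3(x),v_3(y))<\alpha$, then $v_3(x)=v_3(y)$. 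Thus $r\notin S_{3^\alpha}$, both hypotheses of Theorem~\ref{thm:Cto-support-congruences} hold, and the strengthened moduli follow.

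The only step requiring care is this valuation argument for $r\notin S_{3^\alpha}$, specifically the case split on whether $3$ divides $2j+1$. Everything else is direct substitution into the two theorems.
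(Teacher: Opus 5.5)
Your proof is correct and follows the paper's argument step for step: Theorem~\ref{thm:all-modulus-support} with $M=p^\alpha$ gives $r\notin R_{p^\alpha}$, part~(1) of Theorem~\ref{thm:Cto-support-congruences} gives the moduli $2$ and $8$, and for $p=3$ the divisibility $3\mid 4r+1$ gives $r\equiv2\pmod3$. The one difference is in showing $r\notin S_{3^\alpha}$: the paper gets it in one line from $r\notin R_{3^\alpha}$ and Lemma~\ref{lem:S-in-R} ($S_M\subseteq R_M$), whereas you check it directly by comparing $3$-adic valuations in $4r+1\equiv(2j+1)^2\pmod{3^\alpha}$, which is equally valid, and your case split reduces to the single observation that $v_3\bigl((2j+1)^2\bigr)$ would have to equal the odd number $v_3(4r+1)<\alpha$.
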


For example, taking $p=3$, $\alpha=2$, and $r\in\{5,8\}$ gives
\[
\cthreeo(9n+r)\equiv0\pmod6,
\qquad
\pthdo(9n+r+1)\equiv0\pmod{24}
\qquad(n\ge0).
\]

All proofs for this section are collected in
Section~\ref{sec:proofs-arithmetic}.

\section{Proofs of the results in Section 4}
\label{sec:proofs-ordinary}

We give the proofs in the order in which the results are stated.

\begin{proof}[Proof of Proposition~\ref{prop:C-fTheta}]
In Ramanujan's identity \eqref{eq:ram-part2-1}, put $a=b=-1$.
The left-hand side then becomes
\[
 \sum_{n\ge0}\frac{q^n}{(q;q)_n^2}=C(q).
\]
Moreover, the coefficient $1+1/a$ of the second series is zero, so that
series disappears.  In the remaining series we have
\[
 (-1)^{m+1}a^{-m-1}b^m
 =(-1)^{m+1}(-1)^{-m-1}(-1)^m
 =(-1)^m,
\]
and
\[
 (-aq;q)_\infty(-bq;q)_\infty=(q;q)_\infty^2.
\]
Consequently,
\[
 C(q)=\frac{1}{(q;q)_\infty^2}
 \sum_{m\ge0}(-1)^mq^{\binom{m+1}{2}}
 =\frac{\fTheta(q)}{(q;q)_\infty^2},
\]
which is \eqref{eq:C-fTheta}.
\end{proof}

\begin{proof}[Proof of Theorem~\ref{thm:ordinary-colored}]
Let $L(N)$ denote the number of two-color partitions of $N$ in which the
largest part occurs exactly once and that occurrence is red.  If the largest
part is $M$, then it contributes $q^M$, while all remaining red and blue
parts are at most $M-1$.  Hence
\begin{align*}
 \sum_{N\ge1}L(N)q^N
 &=\sum_{M\ge1}\frac{q^M}{(q;q)_{M-1}^2} \\
 &=q\sum_{m\ge0}\frac{q^m}{(q;q)_m^2}
 =qC(q).
\end{align*}
Comparison of coefficients gives $L(n+1)=c(n)$.

For the minimal-excludant interpretation, pair consecutive terms in the
partial theta series:
\begin{align*}
 \fTheta(q)
 &=\sum_{j\ge0}
   \left(q^{\binom{2j+1}{2}}-q^{\binom{2j+2}{2}}\right) \\
 &=\sum_{j\ge0}q^{1+2+\cdots+2j}(1-q^{2j+1}).
\end{align*}
Using Proposition~\ref{prop:C-fTheta}, we therefore obtain
\begin{align*}
 C(q)
 &=\frac{1}{(q;q)_\infty}
   \sum_{j\ge0}
   \frac{q^{1+2+\cdots+2j}(1-q^{2j+1})}{(q;q)_\infty} \\
 &=\frac{1}{(q;q)_\infty}
   \sum_{j\ge0}
   \left(
   \prod_{k=1}^{2j}\frac{q^k}{1-q^k}
   \right)
   \left(
   \prod_{k\ge2j+2}\frac{1}{1-q^k}
   \right).
\end{align*}
The first factor $1/(q;q)_\infty$ generates the red parts.  For the blue
parts, the summand indexed by $j$ requires each of
$1,2,\ldots,2j$ to occur, forbids $2j+1$, and leaves every larger part
unrestricted.  Thus the smallest positive integer absent as a blue part is
$2j+1$, which is odd.  This proves the third interpretation; see also
Andrews and Newman \cite{AN}.
\end{proof}

\begin{proof}[Proof of Proposition~\ref{prop:c-parity}]
Work in the ring $\mathbb F_2[[q]]$.  Since
\[
 (1-q^j)^2\equiv1-q^{2j}\pmod2,
\]
we have $(q;q)_m^2\equiv(q^2;q^2)_m\pmod2$, and hence
\begin{align*}
 C(q)
 &=\sum_{m\ge0}\frac{q^m}{(q;q)_m^2}
 \equiv\sum_{m\ge0}\frac{q^m}{(q^2;q^2)_m}\pmod2.
\end{align*}
The $q$-binomial theorem \eqref{eq:qbinomial}, with base $q^2$, $a=0$,
and $z=q$, gives
\[
 \sum_{m\ge0}\frac{q^m}{(q^2;q^2)_m}
 =\frac{1}{(q;q^2)_\infty}.
\]
Since
\[
 \frac{1}{(q;q^2)_\infty}
 =\frac{(q^2;q^2)_\infty}{(q;q)_\infty}
 =(-q;q)_\infty
 \equiv(q;q)_\infty\pmod2,
\]
Euler's pentagonal number theorem \eqref{eq:euler-pentagonal} yields
\[
 C(q)\equiv
 \sum_{k=-\infty}^{\infty}q^{k(3k+1)/2}\pmod2.
\]
The exponents in this bilateral sum are precisely the generalized
pentagonal numbers $k(3k\pm1)/2$ with $k\ge0$.  The result follows by
comparing coefficients.
\end{proof}

\begin{proof}[Proof of Corollary~\ref{cor:c-residue}]
Suppose that $c(Mn+r)$ is odd and put $N=Mn+r$.  By
Proposition~\ref{prop:c-parity}, either
$N=k(3k-1)/2$ or $N=k(3k+1)/2$ for some $k\ge0$.  Accordingly,
\[
 24N+1=(6k-1)^2
 \qquad\text{or}\qquad
 24N+1=(6k+1)^2.
\]
Reducing modulo $M$ shows that $24r+1$ must be a square modulo $M$.
The contrapositive gives the asserted congruence.
\end{proof}

\begin{proof}[Proof of Proposition~\ref{prop:Cf}]
Set $a=b=1$ in \eqref{eq:ram-part2-1}.  The left-hand side is
$C_f(q)$.  The first series on the right becomes
\[
 -\frac{1}{(-q;q)_\infty^2}
  \sum_{m\ge0}(-1)^mq^{\binom{m+1}{2}}
 =-\frac{\fTheta(q)}{(-q;q)_\infty^2}.
\]
The factor $1+1/a$ in the second series is now $2$, and the remaining
series is exactly the left-hand side of \eqref{eq:ram-ch9-2}.  Therefore
\begin{align*}
 C_f(q)
 &=-\frac{\fTheta(q)}{(-q;q)_\infty^2}
   +2\sum_{m\ge0}
     \frac{(-1)^mq^{m(m+1)/2}}{(-q;q)_m} \\
 &=-\frac{\fTheta(q)}{(-q;q)_\infty^2}+2\fE(q),
\end{align*}
which proves \eqref{eq:Cf}.  In particular, the negative sign in the first
term follows directly from the specialization.
\end{proof}

\begin{proof}[Proof of Proposition~\ref{prop:Cphi}]
Since
\[
 \frac{1}{(-q^2;q^2)_n}
 =\frac{(q^2;q^2)_n}{(q^4;q^4)_n},
\]
we may write
\begin{align*}
 C_\phi(q)
 &=\sum_{n\ge0}\frac{(q^2;q^2)_nq^n}{(q^4;q^4)_n} \\
 &=(q^2;q^2)_\infty
   \sum_{n\ge0}
   \frac{q^n}{(q^4;q^4)_n(q^{2n+2};q^2)_\infty}.
\end{align*}
By the $q$-binomial theorem \eqref{eq:qbinomial} with base $q^2$,
\[
 \frac{1}{(q^{2n+2};q^2)_\infty}
 =\sum_{m\ge0}\frac{q^{(2n+2)m}}{(q^2;q^2)_m}.
\]
Substitution and interchange of the two sums give
\begin{align*}
 C_\phi(q)
 &=(q^2;q^2)_\infty
   \sum_{m\ge0}\frac{q^{2m}}{(q^2;q^2)_m}
   \sum_{n\ge0}\frac{q^{(2m+1)n}}{(q^4;q^4)_n} \\
 &=(q^2;q^2)_\infty
   \sum_{m\ge0}
   \frac{q^{2m}}{(q^2;q^2)_m(q^{2m+1};q^4)_\infty},
\end{align*}
where the $q$-binomial theorem \eqref{eq:qbinomial} with base $q^4$ was used once more.

We now separate the even and odd values of $m$.  The product identities
\begin{align*}
 (q^2;q^2)_{2r}&=(q^2;q^4)_r(q^4;q^4)_r,\\
 (q^2;q^2)_{2r+1}&=(1-q^2)(q^6;q^4)_r(q^4;q^4)_r,\\
 \frac{1}{(q^{4r+1};q^4)_\infty}
 &=\frac{(q;q^4)_r}{(q;q^4)_\infty},\\
 \frac{1}{(q^{4r+3};q^4)_\infty}
 &=\frac{(q^3;q^4)_r}{(q^3;q^4)_\infty}
\end{align*}
show that $C_\phi(q)=E(q)+O(q)$, where
\begin{align*}
 E(q)&=\frac{(q^2;q^2)_\infty}{(q;q^4)_\infty}
 \sum_{r\ge0}
 \frac{(q;q^4)_rq^{4r}}{(q^4;q^4)_r(q^2;q^4)_r},\\
 O(q)&=\frac{q^2(q^2;q^2)_\infty}
 {(1-q^2)(q^3;q^4)_\infty}
 \sum_{r\ge0}
 \frac{(q^3;q^4)_rq^{4r}}{(q^4;q^4)_r(q^6;q^4)_r}.
\end{align*}
Apply Heine's transformation \eqref{eq:heine} with base $q^4$.  Taking
$(a,b,c,z)=(q,q^4,0,q)$ gives
\[
 E(q)=\sum_{n\ge0}(q;q^4)_nq^n,
\]
while taking $(a,b,c,z)=(q^3,q^4,0,q^3)$ gives
\[
 O(q)=q^2\sum_{n\ge0}(q^3;q^4)_nq^{3n}.
\]
Here we have used
\[
 (q^2;q^2)_\infty=(q^2;q^4)_\infty(q^4;q^4)_\infty
\]
and
\[
 \frac{(q^2;q^2)_\infty}{1-q^2}
 =(q^6;q^4)_\infty(q^4;q^4)_\infty.
\]
Thus
\[
 C_\phi(q)
 =\sum_{n\ge0}(q;q^4)_nq^n
  +q^2\sum_{n\ge0}(q^3;q^4)_nq^{3n},
\]
which proves \eqref{eq:Cphi-finite}.

It remains to identify this expression with $\fE(-q)$.  Applying the
Rogers--Fine formula \eqref{eq:rogers-fine} first with base $q^4$ and
$a=q$, and then with base $q^4$ and $a=q^3$, gives
\begin{align*}
 \sum_{n\ge0}(q;q^4)_nq^n
 &=\sum_{r\ge0}(-1)^rq^{6r^2+r}(1+q^{4r+1}),\\
 q^2\sum_{n\ge0}(q^3;q^4)_nq^{3n}
 &=\sum_{r\ge0}(-1)^rq^{6r^2+7r+2}(1+q^{4r+3}).
\end{align*}
On the other hand, split the defining series
\[
 \fE(-q)=\sum_{n\ge0}(-q)^{n(3n+1)/2}
          \bigl(1-(-q)^{2n+1}\bigr)
\]
into its even and odd indices.  For $n=2r$ the summand is
\[
 (-1)^rq^{6r^2+r}(1+q^{4r+1}),
\]
whereas for $n=2r+1$ it is
\[
 (-1)^rq^{6r^2+7r+2}(1+q^{4r+3}).
\]
These are exactly the two Rogers--Fine series displayed above.  Hence
$C_\phi(q)=\fE(-q)$, proving \eqref{eq:Cphi-fE}.
\end{proof}

\section{Proofs of the results in Section 5}
\label{sec:proofs-odd}

We give the proofs in the order in which the results are stated.  The proofs
of Theorems~\ref{thm:Ptdo-positive-decomposition} and~\ref{thm:Co-Ptdo}
use the same parity splitting, which is isolated below together with the
partial-theta evaluation needed for the discrepancy.

\begin{proof}[Proof of Proposition~\ref{prop:odd-boxed-correction}]
By \eqref{eq:burson-boxed},
\[
 \mathfrak B_{3,1}(w^3,q)
 =\sum_{j\ge0}(-1)^jw^{6j}q^{3j^2+2j}
\]
and
\[
 \mathfrak B_{3,2}(w^3,q)
 =\sum_{j\ge0}(-1)^jw^{6j}q^{3j^2+4j}.
\]
Multiplying the first identity by $-w^2$ and the second by $-w^4q$, and
then adding, gives \eqref{eq:odd-boxed-correction}.
\end{proof}

\begin{proof}[Proof of Theorem~\ref{thm:Co-colored}]
Let $L_o(N)$ denote the number of two-color partitions of $N$ into odd parts
in which the largest part occurs exactly once, and that occurrence is red.
If the largest part is $2m+3$, it contributes $q^{2m+3}$, while all remaining
red and blue parts are odd and at most $2m+1$.  Hence
\begin{align*}
\sum_{N\ge2}L_o(N)q^N
&=\sum_{m\ge0}\frac{q^{2m+3}}{(q;q^2)_{m+1}^2} \\
&=q^2\sum_{m\ge0}\frac{q^{2m+1}}{(q;q^2)_{m+1}^2}
=q^2C_\omega(q).
\end{align*}
Comparison of coefficients gives $L_o(n+2)=\co(n)$ for every $n\ge0$.
\end{proof}

To prove Theorems~\ref{thm:Ptdo-positive-decomposition} and
\ref{thm:Co-Ptdo}, we must separate the auxiliary series according to the
parity of an intermediate index.  The odd-index part will give the
odd-corner series $C_\omega(q)$, while the even-index part will give the
staircase-decorated series $E_\triangle(q)$.  Their sum yields the positive
decomposition and their difference yields the sparse correction.  The next
two lemmas carry out these two steps.

\begin{lemma}\label{lem:Co-split}
Put
\[
T_1(q)=(-q;q)_\infty^2
\sum_{m\ge0}\frac{\qpoch{q}{q^2}{m}q^m}{\qpoch{-q}{q}{m}},
\]
and
\[
T_2(q)=\qinf{q^2}{q^2}
\sum_{m\ge0}\frac{\qpoch{q}{q^2}{m}q^m}{\qpoch{q}{q}{m}}.
\]
Then
\begin{equation}\label{eq:T1-positive-split}
 T_1(q)=C_\omega(q)+E_\triangle(q),
\end{equation}
and
\begin{equation}\label{eq:T2-signed-split}
 T_2(q)=E_\triangle(q)-C_\omega(q).
\end{equation}
In particular,
\[
 C_\omega(q)=\frac12T_1(q)-\frac12T_2(q).
\]
\end{lemma}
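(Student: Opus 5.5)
The plan is to write $T_1$ and $T_2$ as two signed versions of a single double series, so that their sum and difference isolate its even-index and odd-index parts. First clear the $m$-dependent denominators with Euler's expansions:
\[
\frac{(-q;q)_\infty}{(-q;q)_m}=(-q^{m+1};q)_\infty=\sum_{k\ge0}\frac{q^{T_k+mk}}{(q;q)_k},
\qquad
\frac{(q;q)_\infty}{(q;q)_m}=\sum_{k\ge0}\frac{(-1)^kq^{T_k+mk}}{(q;q)_k}.
\]
The first is used in $T_1$. For $T_2$, note that $(q^2;q^2)_\infty=(q;q)_\infty(-q;q)_\infty$, so the second applies. After interchanging sums, both series have the same shape:
\[
T_1(q)=(-q;q)_\infty\sum_{k\ge0}\frac{q^{T_k}}{(q;q)_k}A_k,\qquad
T_2(q)=(-q;q)_\infty\sum_{k\ge0}\frac{(-1)^kq^{T_k}}{(q;q)_k}A_k,
\]
where
\[
A_k=\sum_{m\ge0}(q;q^2)_mq^{m(k+1)}.
\]
Writing $\mathcal E$ and $\mathcal O$ for the even-$k$ and odd-$k$ parts of the first sum, we get $T_1=\mathcal E+\mathcal O$ and $T_2=\mathcal E-\mathcal O$. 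Both \eqref{eq:T1-positive-split} and \eqref{eq:T2-signed-split} therefore reduce to the two identities $\mathcal O=C_\omega(q)$ and $\mathcal E=E_\triangle(q)$. The final formula $C_\omega=\tfrac12T_1-\tfrac12T_2$ is then immediate.

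Next, transform $A_k$ by Heine's transformation \eqref{eq:heine} with base $q^2$ and parameters $(a,b,c,z)=(q,q^2,0,q^{k+1})$; the factor $(q^2;q^2)_n/(q^2;q^2)_n$ is inserted artificially. This gives
\[
A_k=\frac{(q^2;q^2)_\infty(q^{k+2};q^2)_\infty}{(q^{k+1};q^2)_\infty}
\sum_{n\ge0}\frac{(q^{k+1};q^2)_n\,q^{2n}}{(q^2;q^2)_n(q^{k+2};q^2)_n}.
\]
The parity of $k$ now decides which shifted products appear.

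For $k=2j+1$, the infinite products combine with $q^{T_{2j+1}}/(q;q)_{2j+1}$ and with $(-q;q)_\infty$. The aim is to reach the odd-base quotient $q^{2m+1}/(q;q^2)_{m+1}^2$, using $(q;q)_{2j+1}=(q;q^2)_{j+1}(q^2;q^2)_j$. If necessary, a second application of the $q$-binomial theorem \eqref{eq:qbinomial}, or reading the sum as the corner count \eqref{eq:Co-gf} with peak $2j+1$, should finish the match. For $k=2j$, the products should instead produce the even corner $C(q^2)$ from \eqref{eq:c-gf}, together with two factors $(q^2;q^2)_\infty/(q;q^2)_\infty$. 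By Gauss's identity \eqref{eq:gauss} these two factors equal $G(q)^2$, which gives \eqref{eq:Etriangle-gf}.

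The main obstacle is this pair of explicit identifications after Heine: showing that the odd-$k$ block is exactly $C_\omega$ and the even-$k$ block exactly $G(q)^2C(q^2)$. If the single Heine step does not close, I would first try to evaluate the resulting $n$-sum by the $q$-Gauss sum. Failing that, I would compare with Ramanujan's identities \eqref{eq:ram-part2-1} and \eqref{eq:Oz-ramanujan} at $z=1$. Those give closed forms for $C_\omega$ and $C(q^2)$, so it would suffice to check that $\mathcal E$ and $\mathcal O$ have the same partial-theta expressions.
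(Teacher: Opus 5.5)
\textbf{There is a genuine gap.} Your reduction to $\mathcal O=C_\omega(q)$ and $\mathcal E=E_\triangle(q)$ is correct, but it only restates the lemma. Since $\mathcal E=\tfrac12(T_1+T_2)$ and $\mathcal O=\tfrac12(T_1-T_2)$, these two identities are exactly \eqref{eq:T1-positive-split} and \eqref{eq:T2-signed-split}. Everything therefore rests on the step you leave as ``should finish the match.'' In the termwise form you describe (the $k=2j+1$ term giving the peak-$(2j+1)$ summand of $C_\omega$, the $k=2j$ term giving a summand of $G(q)^2C(q^2)$), that step is false.

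You expanded $(-q^{m+1};q)_\infty$ by Euler's series with exponents $k(k+1)/2+mk$. This leaves the inner sum $A_k=\sum_m(q;q^2)_mq^{m(k+1)}$ with no $(q^2;q^2)_m$ in its denominator. So it is a Rogers--Fine sum, not a $q$-binomial sum, and it does not collapse to a product. Already for $k=0$, \eqref{eq:rogers-fine} in base $q^2$ with $a=q$ gives the false theta series
\[
A_0=\sum_{j\ge0}(-1)^jq^{3j^2+2j}\bigl(1+q^{2j+1}\bigr).
\]
The individual terms are not corner summands:
\[
\frac{q(-q;q)_\infty}{1-q}A_1=q+2q^2+4q^3+6q^4+\cdots,
\qquad
\frac{q}{(1-q)^2}=q+2q^2+3q^3+4q^4+\cdots,
\]
and $(-q;q)_\infty A_0=1+2q+2q^2+\cdots$, whereas $G(q)^2=1+2q+q^2+\cdots$. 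Your parity blocks agree with $C_\omega$ and $E_\triangle$ only after summing over all $k$, and the proposal gives no mechanism for that resummation. Heine merely trades $A_k$ for another nonterminating series with a zero parameter, to which the $q$-Gauss sum does not apply.

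The last fallback is also insufficient. Combining \eqref{eq:Co-second}, Proposition~\ref{prop:C-fTheta} with $q\mapsto q^2$, and the false-theta evaluation of $T_2$ in Lemma~\ref{lem:T2} does give \eqref{eq:T2-signed-split}. But nothing in that route evaluates $T_1$, so \eqref{eq:T1-positive-split} remains unproved.

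\textbf{The missing idea} is to clear the $m$-dependent denominator so that $(q^2;q^2)_m$ appears in the inner sum.
In $T_1$, write
\[
\frac{1}{(-q;q)_m}=\frac{(q;q)_m}{(q^2;q^2)_m},
\qquad
(q;q)_m=(q;q)_\infty\sum_{r\ge0}\frac{q^{r(m+1)}}{(q;q)_r}.
\]
In $T_2$, write
\[
\frac{1}{(q;q)_m}=\frac{(-q;q)_m}{(q^2;q^2)_m},
\qquad
(-q;q)_m=(-q;q)_\infty\sum_{r\ge0}\frac{(-1)^rq^{r(m+1)}}{(q;q)_r}.
\]
Both series then become $\sum_r(\pm1)^rH_r$ with a common $H_r$. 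Its inner sum is evaluated by \eqref{eq:qbinomial} in base $q^2$:
\[
\sum_{m\ge0}\frac{(q;q^2)_mq^{(r+1)m}}{(q^2;q^2)_m}=\frac{(q^{r+2};q^2)_\infty}{(q^{r+1};q^2)_\infty}.
\]
Each $H_r$ is then an explicit product, and the parity of $r$ gives exactly
\[
H_{2k+1}=\frac{q^{2k+1}}{(q;q^2)_{k+1}^2},
\qquad
H_{2k}=G(q)^2\frac{q^{2k}}{(q^2;q^2)_k^2}.
\]
This is the termwise identification you were aiming for, and it needs no Heine step.
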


\begin{proof}
For $r\ge0$, define
\begin{equation}\label{eq:Hr-definition}
 H_r(q):=
 \frac{\qinf{q^2}{q^2}}{\qinf{q}{q^2}}
 \frac{q^r}{\qpoch{q}{q}{r}}
 \sum_{m\ge0}
 \frac{\qpoch{q}{q^2}{m}q^{(r+1)m}}
      {\qpoch{q^2}{q^2}{m}}.
\end{equation}
Interchanging the two sums and using the $q$-binomial theorem
\eqref{eq:qbinomial} first with $z=q^{m+1}$ and then with
$z=-q^{m+1}$ gives
\begin{equation}\label{eq:T1-T2-Hr}
 T_1(q)=\sum_{r\ge0}H_r(q),
 \qquad
 T_2(q)=\sum_{r\ge0}(-1)^rH_r(q).
\end{equation}
Indeed, the two inner sums over $r$ are
\[
 \sum_{r\ge0}\frac{q^{r(m+1)}}{\qpoch{q}{q}{r}}
 =\frac{1}{\qinf{q^{m+1}}{q}},
 \qquad
 \sum_{r\ge0}\frac{(-1)^rq^{r(m+1)}}{\qpoch{q}{q}{r}}
 =\frac{1}{\qinf{-q^{m+1}}{q}},
\]
and the product identities
\[
 \qpoch{q^2}{q^2}{m}=\qpoch{q}{q}{m}\qpoch{-q}{q}{m},
 \qquad
 \qinf{-q}{q}=\frac1{\qinf{q}{q^2}}
\]
reduce the resulting expressions to $T_1(q)$ and $T_2(q)$.

A further application of \eqref{eq:qbinomial}, now with base $q^2$, yields
\[
 \sum_{m\ge0}
 \frac{\qpoch{q}{q^2}{m}q^{(r+1)m}}
      {\qpoch{q^2}{q^2}{m}}
 =\frac{\qinf{q^{r+2}}{q^2}}{\qinf{q^{r+1}}{q^2}}.
\]
If $r=2k+1$, then
\begin{align*}
 H_{2k+1}(q)
 &=q^{2k+1}
 \frac{\qpoch{q^2}{q^2}{k}}
      {\qpoch{q}{q^2}{k+1}\qpoch{q}{q}{2k+1}}\\
 &=\frac{q^{2k+1}}{\qpoch{q}{q^2}{k+1}^2},
\end{align*}
because
$\qpoch{q}{q}{2k+1}=\qpoch{q}{q^2}{k+1}\qpoch{q^2}{q^2}{k}$.
Thus
\begin{equation}\label{eq:H-odd}
 H_{2k+1}(q)=
 \frac{q^{2k+1}}{\qpoch{q}{q^2}{k+1}^2},
\end{equation}
which is the summand of $C_\omega(q)$ with peak $2k+1$.

If $r=2k$, then
\begin{align*}
 H_{2k}(q)
 &=q^{2k}
 \frac{\qinf{q^2}{q^2}^2}{\qinf{q}{q^2}^2}
 \frac{\qpoch{q}{q^2}{k}}
      {\qpoch{q^2}{q^2}{k}\qpoch{q}{q}{2k}}\\
 &=G(q)^2\frac{q^{2k}}{\qpoch{q^2}{q^2}{k}^2},
\end{align*}
where we used \eqref{eq:gauss} and
$\qpoch{q}{q}{2k}=\qpoch{q}{q^2}{k}\qpoch{q^2}{q^2}{k}$.
Therefore
\begin{equation}\label{eq:H-even}
 H_{2k}(q)=
 G(q)^2\frac{q^{2k}}{\qpoch{q^2}{q^2}{k}^2},
\end{equation}
which is the summand of $G(q)^2C(q^2)=E_\triangle(q)$ with doubled peak
$2k$.  Thus the odd-index part of \eqref{eq:T1-T2-Hr} is $C_\omega(q)$
and the even-index part is $E_\triangle(q)$.  Adding and subtracting these
two parts proves \eqref{eq:T1-positive-split} and
\eqref{eq:T2-signed-split}.
\end{proof}

\begin{lemma}\label{lem:T2}
We have
\[
T_2(q)=\sum_{j\ge0}(-1)^jq^{3j^2+2j}\left(1+q^{2j+1}\right).
\]
\end{lemma}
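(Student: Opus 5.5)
The plan is to avoid transforming $T_2(q)$ directly and instead use the signed split \eqref{eq:T2-signed-split} of Lemma~\ref{lem:Co-split}, which gives
\[
 T_2(q)=E_\triangle(q)-C_\omega(q).
\]
Each of the two corner series on the right already has a closed evaluation from earlier results. The lemma should then reduce to checking that the two modular-type (non-sparse) parts cancel.

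First I evaluate $E_\triangle(q)$. By \eqref{eq:Etriangle-gf}, $E_\triangle(q)=G(q)^2C(q^2)$. Proposition~\ref{prop:C-fTheta}, with $q\mapsto q^2$, gives
\[
 C(q^2)=\frac{\fTheta(q^2)}{(q^2;q^2)_\infty^2}.
\]
Gauss's identity \eqref{eq:gauss} gives $G(q)^2=(q^2;q^2)_\infty^2/(q;q^2)_\infty^2$. Hence
\[
 E_\triangle(q)=\frac{\fTheta(q^2)}{(q;q^2)_\infty^2}
 =\frac{1}{(q;q^2)_\infty^2}\sum_{j\ge0}(-1)^jq^{j(j+1)},
\]
since $\fTheta(q^2)=\sum_{j\ge0}(-1)^jq^{j(j+1)}$ by \eqref{eq:fTheta}.

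Second I evaluate $C_\omega(q)$. This is \eqref{eq:Co-second}, which is the $z=1$ case of the Bringmann--Lovejoy identity \eqref{eq:Oz-ramanujan}:
\[
C_\omega(q)=-\sum_{j\ge0}(-1)^jq^{3j^2+2j}(1+q^{2j+1})
+\frac{1}{(q;q^2)_\infty^2}\sum_{j\ge0}(-1)^jq^{j(j+1)}.
\]
The second term here is exactly $E_\triangle(q)$ as computed above. Subtracting therefore leaves
\[
 T_2(q)=E_\triangle(q)-C_\omega(q)=\sum_{j\ge0}(-1)^jq^{3j^2+2j}(1+q^{2j+1}),
\]
which is the claim.

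The only real obstacle is whether one wants to rely on \eqref{eq:Oz-ramanujan} or to give a self-contained evaluation of $T_2$. Relying on it is legitimate, since it is recalled input, but it makes the lemma a reformulation of that identity rather than an independent proof. If independence were desired, I would start from
\[
 T_2(q)=(q^2;q^2)_\infty\sum_{m\ge0}\frac{(q;q^2)_mq^m}{(q;q)_m}.
\]
I would first convert $(q;q^2)_m$ to base-$q$ Pochhammer symbols with $q^{1/2}$ arguments, namely $(q^{1/2};q)_m(-q^{1/2};q)_m$ after $q\mapsto q^2$. I would then apply Heine's transformation \eqref{eq:heine} to reach a series of Rogers--Fine type \eqref{eq:rogers-fine}, whose output has the required $q^{3j^2+2j}$ support; this reduction step is where I expect the difficulty to lie.
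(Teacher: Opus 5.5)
Your argument is correct, but it takes a different route from the paper's. You obtain the lemma by subtraction. You combine \eqref{eq:T2-signed-split} with the evaluation $E_\triangle(q)=G(q)^2C(q^2)=\fTheta(q^2)/\qinf{q}{q^2}^2$ from Proposition~\ref{prop:C-fTheta} and \eqref{eq:gauss}. You then use the $z=1$ case \eqref{eq:Co-second} of the Bringmann--Lovejoy identity, whose non-sparse term is exactly $E_\triangle(q)$.

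The paper instead transforms $T_2$ directly. Heine's transformation with $(a,b,c,z)=(-q^{1/2},q^{1/2},0,q)$ gives $T_2(q)=\sum_{n\ge0}q^{n/2}/\qpoch{-q^{1/2}}{q}{n+1}$, since the prefactor is $1$. This is the left side of \eqref{eq:ram-ch9-3} with $q\mapsto q^{1/2}$. So your fallback sketch is essentially the paper's proof, and no Rogers--Fine step is needed: the difficulty you anticipated does not arise.

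Your route is shorter and avoids half-powers. However, it imports the full two-variable theorem of Bringmann and Lovejoy, and it reduces Theorem~\ref{thm:Co-Ptdo} to a rearrangement of \eqref{eq:Co-second}. The paper's route uses only Heine and one one-variable Lost Notebook entry, so the dependence is reversed: Lemmas~\ref{lem:Co-split} and~\ref{lem:T2}, together with your formula for $E_\triangle$, give an independent proof of \eqref{eq:Co-second} from Ramanujan's identities alone.

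The paper's route is also uniform. The same Heine step with $c=q$, followed by \eqref{eq:ram-ch9-1}, proves Lemma~\ref{lem:Y2}. Your subtraction trick would instead require $E_{3,\triangle}(q)=\fTheta(q)/\qinf{q}{q^2}^3$ in advance. That evaluation is not available beforehand; it is a consequence of Lemma~\ref{lem:Y2} and \eqref{eq:Dz-ramanujan} at $z=1$.
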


\begin{proof}
We use Heine's transformation \eqref{eq:heine}, together with the formal
half-power convention stated in Section~\ref{sec:rank-functions}.  Since
\[
\qpoch{q}{q^2}{m}=\qpoch{q^{1/2}}{q}{m}\qpoch{-q^{1/2}}{q}{m},
\]
we apply \eqref{eq:heine} with
\[
(a,b,c,z)=(-q^{1/2},q^{1/2},0,q).
\]
This gives
\begin{align*}
T_2(q)
&=\qinf{q^2}{q^2}\sum_{m\ge0}
\frac{\qpoch{q}{q^2}{m}q^m}{\qpoch{q}{q}{m}}  \\
&=\frac{\qinf{q^2}{q^2}\qinf{q^{1/2}}{q}\qinf{-q^{1/2}}{q}}
        {\qinf{q}{q}}
\sum_{n\ge0}\frac{q^{n/2}}{\qpoch{-q^{1/2}}{q}{n+1}}.
\end{align*}
The prefactor is $1$, because
\[
\qinf{q^{1/2}}{q}\qinf{-q^{1/2}}{q}=\qinf{q}{q^2}
\quad\text{and}\quad
\qinf{q}{q}=\qinf{q}{q^2}\qinf{q^2}{q^2}.
\]
Replacing $q$ by $q^{1/2}$ in \eqref{eq:ram-ch9-3} now gives
\[
\sum_{n\ge0}\frac{q^{n/2}}{\qpoch{-q^{1/2}}{q}{n+1}}
=\sum_{j\ge0}(-1)^jq^{3j^2+2j}\left(1+q^{2j+1}\right),
\]
which proves the lemma.
\end{proof}

\begin{proof}[Proof of Theorem~\ref{thm:Ptdo-positive-decomposition}]
The generating function \eqref{eq:Ptdo-gf} gives
\begin{align*}
\sum_{n\ge0}\ptdo(n)q^n
&=q\sum_{m\ge0}q^m
\frac{\qinf{-q^{m+1}}{q}}{\qinf{q^{2m+1}}{q^2}}  \\
&=q\frac{\qinf{-q}{q}}{\qinf{q}{q^2}}
\sum_{m\ge0}\frac{\qpoch{q}{q^2}{m}q^m}{\qpoch{-q}{q}{m}} \\
&=qT_1(q).
\end{align*}
Since $\ptdo(0)=0$, division by $q$ and
\eqref{eq:T1-positive-split} give
\[
 \sum_{N\ge0}\ptdo(N+1)q^N
 =T_1(q)=C_\omega(q)+E_\triangle(q).
\]
This proves \eqref{eq:Ptdo-positive-decomposition}; coefficient comparison
gives \eqref{eq:Ptdo-positive-coeff}.
\end{proof}

\begin{proof}[Proof of Theorem~\ref{thm:Co-Ptdo}]
The preceding calculation gives
\[
 \sum_{N\ge0}\ptdo(N+1)q^N=T_1(q).
\]
By \eqref{eq:T1-positive-split} and \eqref{eq:T2-signed-split},
$T_1(q)-2C_\omega(q)=T_2(q)$.  Lemma~\ref{lem:T2} therefore gives
\[
\sum_{N\ge0}\left(\ptdo(N+1)-2\co(N)\right)q^N
=
\sum_{j\ge0}(-1)^jq^{3j^2+2j}\left(1+q^{2j+1}\right).
\]
The theorem follows by comparing coefficients.
\end{proof}

\begin{proof}[Proof of Corollary~\ref{cor:two-color-boxed}]
The positive decomposition \eqref{eq:Ptdo-positive-decomposition} gives
\begin{align}
 E_\triangle(q)-C_\omega(q)
 &=\sum_{N\ge0}
   \bigl(\ptdo(N+1)-2\co(N)\bigr)q^N.                 \label{eq:proof-cor55-first}
\end{align}
By Theorem~\ref{thm:Co-Ptdo}, the right-hand side of
\eqref{eq:proof-cor55-first} is
\begin{equation}\label{eq:proof-cor55-sparse}
 \sum_{j\ge0}(-1)^jq^{3j^2+2j}
 +\sum_{j\ge0}(-1)^jq^{3j^2+4j+1}.
\end{equation}

We now identify the two series in \eqref{eq:proof-cor55-sparse} with the two
boxed modular series.  Setting $u=1$ in \eqref{eq:burson-boxed}, first with
$(m,r)=(3,1)$ and then with $(m,r)=(3,2)$, gives
\[
 \mathfrak B_{3,1}(1,q)
 =\sum_{j\ge0}(-1)^jq^{j(3j+2)}
 =\sum_{j\ge0}(-1)^jq^{3j^2+2j}
\]
and
\[
 q\mathfrak B_{3,2}(1,q)
 =q\sum_{j\ge0}(-1)^jq^{j(3j+4)}
 =\sum_{j\ge0}(-1)^jq^{3j^2+4j+1}.
\]
Consequently,
\begin{equation}\label{eq:proof-cor55-B-sum}
 E_\triangle(q)-C_\omega(q)
 =\mathfrak B_{3,1}(1,q)+q\mathfrak B_{3,2}(1,q).
\end{equation}

The first line of \eqref{eq:burson-boxed} shows that
\[
 [q^N]\mathfrak B_{3,1}(1,q)
 =\sum_{(k,\overline\pi)\in\mathcal B_{3,1}(N)}
  (-1)^{\nu(\overline\pi)}.
\]
Similarly,
\begin{align*}
 [q^N]\,q\mathfrak B_{3,2}(1,q)
 &=[q^{N-1}]\mathfrak B_{3,2}(1,q)\\
 &=\sum_{(k,\overline\pi)\in\mathcal B_{3,2}(N-1)}
   (-1)^{\nu(\overline\pi)},
\end{align*}
where the last sum is empty for $N=0$.  Taking the coefficient of $q^N$ in
\eqref{eq:proof-cor55-B-sum} proves \eqref{eq:two-color-boxed}.

It remains to explain the exceptional weights.  For $(m,r)=(3,1)$, the fixed
pair in \eqref{eq:burson-fixed-pairs} is
$(j,(3j+1)^j)$.  Its sign is $(-1)^j$, and its weight is
\[
 1\cdot j+j(3j+1)=3j^2+2j.
\]
For $(m,r)=(3,2)$, the fixed pair is $(j,(3j+2)^j)$, with sign $(-1)^j$
and weight
\[
 2j+j(3j+2)=3j^2+4j.
\]
The additional factor $q$ in \eqref{eq:proof-cor55-B-sum} shifts this second
weight to $3j^2+4j+1$.  Burson's involutions cancel all remaining pairs,
which proves the final assertion.
\end{proof}

\begin{proof}[Proof of Proposition~\ref{prop:Co-parity}]
From \eqref{eq:Co-gf},
\begin{align*}
\sum_{n\ge0}\co(n)q^n
&=\sum_{m\ge0}\frac{q^{2m+1}}{\qpoch{q}{q^2}{m+1}^2}  \\
&\equiv
q\sum_{m\ge0}\frac{q^{2m}}{\qpoch{q^2}{q^4}{m+1}}
\pmod2.
\end{align*}
Modulo $2$, the signs in \eqref{eq:ram-ch9-3} do not matter.  Replacing
$q$ there by $q^2$ therefore gives
\[
\sum_{n\ge0}\co(n)q^n
\equiv
\sum_{j\ge0}q^{12j^2+8j+1}
+
\sum_{j\ge0}q^{12j^2+16j+5}
\pmod2.
\]
The two sequences do not overlap.  Moreover,
\[
3(12j^2+8j+1)+1=(6j+2)^2,
\]
and
\[
3(12j^2+16j+5)+1=(6j+4)^2.
\]
The first sequence has positive square root $6j+2$, and the second has
positive square root $6j+4$.  Conversely, suppose that $3n+1=x^2$ with $x$
even.  Since $x^2\equiv1\pmod3$, the integer $x$ is not divisible by $3$;
hence $x\equiv2$ or $4\pmod6$.  Thus $x=6j+2$ or $x=6j+4$ for some
$j\ge0$, and $n$ belongs to one of the two displayed exponent sequences.
Therefore $\co(n)$ is odd exactly when $3n+1$ is an even square.
\end{proof}

\begin{proof}[Proof of Corollary~\ref{cor:Ptdo-mod4}]
The case $N=0$ is immediate from $\ptdo(0)=0$.  Let $N\ge1$ and put
$n=N-1$ in \eqref{eq:Ptdo-theta-coeff}.  Then
\begin{equation}\label{eq:proof-cor57-start}
 \ptdo(N)=2\co(N-1)+s(N-1),
\end{equation}
where
\[
 s(n)=[q^n]\sum_{j\ge0}(-1)^jq^{3j^2+2j}(1+q^{2j+1}).
\]
Thus $s(n)$ is nonzero only at the two families
\[
 A_j=3j^2+2j,
 \qquad
 B_j=3j^2+4j+1,
\]
and in either family its value is $(-1)^j$.

We first relate these two families to the square condition in the statement.
A direct calculation gives
\begin{equation}\label{eq:proof-cor57-squares}
 3(A_j+1)-2=(3j+1)^2,
 \qquad
 3(B_j+1)-2=(3j+2)^2.
\end{equation}
Conversely, suppose that $3N-2=x^2$ for some $x>0$.  Since
$x^2\equiv1\pmod3$, one has $3\nmid x$.  Hence either $x=3j+1$ or
$x=3j+2$ for a unique $j\ge0$.  The two alternatives in
\eqref{eq:proof-cor57-squares} then show that, respectively,
$N-1=A_j$ or $N-1=B_j$.  Therefore the square condition is equivalent to
$N-1$ belonging to one of the two supports of $s$.

Assume first that $N-1=A_j$.  Proposition~\ref{prop:Co-parity} applies to
\[
 3A_j+1=(3j+1)^2.
\]
The number $3j+1$ is even exactly when $j$ is odd.  Hence
\[
 2\co(A_j)\equiv
 \begin{cases}
 0,&j\text{ even},\\
 2,&j\text{ odd}
 \end{cases}
 \pmod4.
\]
Using \eqref{eq:proof-cor57-start} and $s(A_j)=(-1)^j$, we obtain in both
parity cases
\[
 \ptdo(A_j+1)\equiv1\pmod4.
\]
The positive square root in this case is $x=3j+1$, and therefore
$\chi_3(x)=1$.

Now assume that $N-1=B_j$.  Since
\[
 3B_j+1=(3j+2)^2,
\]
Proposition~\ref{prop:Co-parity} shows that $\co(B_j)$ is odd exactly when
$3j+2$ is even, or equivalently when $j$ is even.  Thus
\[
 2\co(B_j)\equiv
 \begin{cases}
 2,&j\text{ even},\\
 0,&j\text{ odd}
 \end{cases}
 \pmod4.
\]
Together with $s(B_j)=(-1)^j$, equation \eqref{eq:proof-cor57-start} gives
\[
 \ptdo(B_j+1)\equiv-1\pmod4.
\]
Here the positive square root is $x=3j+2$, so $\chi_3(x)=-1$.

Finally, if $3N-2$ is not a square, then $N-1$ is neither $A_j$ nor $B_j$;
hence $s(N-1)=0$.  Proposition~\ref{prop:Co-parity} also shows that
$\co(N-1)$ is even, because $3(N-1)+1=3N-2$ is not an even square.
Equation \eqref{eq:proof-cor57-start} therefore gives
$\ptdo(N)\equiv0\pmod4$.  This proves the corollary.
\end{proof}

\begin{proof}[Proof of Corollary~\ref{cor:two-nonsquare}]
If $N=Mn+r$ and $3r+1$ is a nonsquare modulo $M$, then $3N+1$ cannot be an
integer square, so Proposition~\ref{prop:Co-parity} gives the congruence for
$\co$.  If $3r-2$ is a nonsquare modulo $M$, then $3N-2$ cannot be an
integer square, so Corollary~\ref{cor:Ptdo-mod4} gives the congruence for
$\ptdo$.
\end{proof}

\begin{proof}[Proof of Proposition~\ref{prop:Cnu}]
Ramanujan's identity \eqref{eq:ram-ch9-3} gives
\begin{align*}
C_\nu(q)
&=\sum_{m\ge0}\frac{q^m}{(-q;q^2)_{m+1}} \\
&=\sum_{m\ge0}(-1)^m q^{6m^2+4m}(1+q^{4m+2})
=\fO(-q^2).
\end{align*}
It remains to identify the finite-product series with the same false theta
function.  Apply the Rogers--Fine formula \eqref{eq:rogers-fine} with base
$q^4$ and $a=q^2$.  Then
\begin{align*}
\sum_{m\ge0}(q^2;q^4)_mq^{2m}
&=\sum_{m\ge0}(-1)^m(q^4)^{m(3m-1)/2}q^{6m}
   (1+q^{4m+2}) \\
&=\sum_{m\ge0}(-1)^m q^{6m^2+4m}(1+q^{4m+2})
=\fO(-q^2).
\end{align*}
Thus both series in \eqref{eq:Cnu} are equal to $\fO(-q^2)$.
\end{proof}

\begin{proof}[Proof of Proposition~\ref{prop:Cpsi}]
By \eqref{eq:Cpsi-def}, $C_\psi(q)$ is the left-hand side of
\eqref{eq:ram-part3-psi}, while \eqref{eq:omega3} identifies its right-hand
side as $q\omega_3(q)$.  This proves \eqref{eq:Cpsi}.
\end{proof}

\section{Proofs of the results in Section 6}
\label{sec:proofs-three}

We first record the boxed modular application, then prove the exceptional
parity results and the quartic and cubic root-of-unity identities.  The proof
of Theorem~\ref{thm:Cto-Pthdo} is parallel to that of
Theorem~\ref{thm:Co-Ptdo}: we isolate an auxiliary series and evaluate it
using \eqref{eq:heine} and \eqref{eq:ram-ch9-1}.  The final corollaries
follow from the resulting sparse theta terms and the cubic rank difference.

\begin{proof}[Proof of Proposition~\ref{prop:three-boxed-correction}]
In \eqref{eq:burson-boxed}, take $(m,r)=(2,1)$ and $u=w$, and replace
the base $q$ by $q^{1/2}$.  Its last member becomes
\[
 \sum_{j\ge0}(-1)^jw^{2j}
 (q^{1/2})^{j(2j+2)}
 =\sum_{j\ge0}(-w^2)^jq^{j(j+1)},
\]
which is \eqref{eq:three-boxed-correction}.  The fixed-point assertion is
part of Burson's construction recorded in \eqref{eq:burson-fixed-pairs}.
\end{proof}

\begin{proof}[Proof of Theorem~\ref{thm:D-parity-specialization}]
Write
\[
 A(z;q)=\sum_{j\ge0}(-z)^jq^{j(j+1)/2},
 \qquad
 B(z;q)=\sum_{j\ge0}(-z)^jq^{j(j+1)},
\]
and
\[
 Q(z;q)=(zq;q^2)_\infty(z^{-1}q;q^2)_\infty.
\]
Then \eqref{eq:Dz-ramanujan} is
\[
 (1+z^{-1})\mathcal D(z;q)
 =\frac{A(z;q)}{(q;q^2)_\infty Q(z;q)}-B(z;q).
\]
Both sides are analytic in a neighborhood of $z=-1$.  Differentiating and
then setting $z=-1$ gives
\begin{equation}\label{eq:D-parity-derivative-step}
 -\mathcal D(-1;q)
 =\left.\frac{d}{dz}
 \left(\frac{A(z;q)}{(q;q^2)_\infty Q(z;q)}\right)\right|_{z=-1}
 -B'(-1;q).
\end{equation}
Now
\[
 A(-1;q)=G(q),
 \qquad A'(-1;q)=-G_1(q),
 \qquad B'(-1;q)=-G_1(q^2).
\]
Moreover, $Q(z;q)=Q(z^{-1};q)$, and therefore $Q'(-1;q)=0$.  Finally,
setting $z=-1$ in \eqref{eq:Dz-ramanujan} shows that
\[
 \frac{G(q)}{(q;q^2)_\infty Q(-1;q)}=G(q^2).
\]
Substitution in \eqref{eq:D-parity-derivative-step} yields
\[
 -\mathcal D(-1;q)
 =-\frac{G(q^2)}{G(q)}G_1(q)+G_1(q^2),
\]
which is \eqref{eq:D-parity-specialization}.  Finally, Gauss's identity
\eqref{eq:gauss} gives
\[
 \frac{G(q^2)}{G(q)}
 =\frac{(q;q^2)_\infty}{(q^2;q^4)_\infty^2},
\]
and hence \eqref{eq:D-parity-specialization-product}.
\end{proof}

\begin{proof}[Proof of Theorem~\ref{thm:D-parity-bias}]
Starting from \eqref{eq:Dz-gf}, set $z=-1$ and then replace $q$ by $-q$.
Since
\[
 (q;-q)_{2m}=(q;q^2)_m(-q^2;q^2)_m,
\]
we obtain
\begin{align*}
 -\mathcal D(-1;-q)
 &=\sum_{m\ge0}
 \frac{q^{2m+1}(q;-q)_{2m}}{(q;q^2)_{m+1}^2}\\
 &=\sum_{m\ge0}
 \frac{q^{2m+1}(-q^2;q^2)_m}
 {(q;q^2)_m(1-q^{2m+1})^2},
\end{align*}
which proves \eqref{eq:D-parity-positive}.  For each $m\ge0$, the polynomial $(-q^2;q^2)_m$ has nonnegative
coefficients, and so do the reciprocal products
$1/(q;q^2)_m$ and $1/(1-q^{2m+1})^2$.  Hence every summand on the
right-hand side of \eqref{eq:D-parity-positive} has nonnegative
coefficients.  More precisely, the summand with $m=0$ is
\[
 \frac{q}{(1-q)^2}=\sum_{n\ge1}nq^n.
\]
Therefore, for every $n\ge1$, the coefficient of $q^n$ in the full
right-hand side is at least $n$, and is consequently strictly positive.
On the other hand,
\[
 -\mathcal D(-1;-q)
 =\sum_{n\ge0}(-1)^{n+1}
 \bigl(d_0^{(2)}(n)-d_1^{(2)}(n)\bigr)q^n.
\]
Comparing the coefficient of $q^n$ for each $n\ge1$ proves
\eqref{eq:D-parity-inequality}.
\end{proof}

\begin{proof}[Proof of Theorem~\ref{thm:D-cyclotomic-differences}]
We first treat $z=i$.  Put
\[
 H_4(q):=(q;q^2)_\infty(-q^2;q^4)_\infty.
\]
Since
\[
 (iq;q^2)_\infty(-iq;q^2)_\infty=(-q^2;q^4)_\infty,
\]
identity \eqref{eq:Dz-ramanujan} gives
\begin{equation}\label{eq:D-i-A}
 \mathcal D(i;q)=\frac{1+i}{2}
 \left(\frac{A_i(q)}{H_4(q)}-A_i(q^2)\right),
 \qquad
 A_i(q):=\sum_{j\ge0}(-i)^j q^{T_j}.
\end{equation}
Define the bilateral theta series
\[
 \Theta_4(q):=\sum_{n\in\mathbb Z}(-1)^n q^{n(2n+1)}.
\]
Jacobi's triple product \eqref{eq:jacobi-triple}, with base $q^4$ and
$x=q^3$, gives
\begin{equation}\label{eq:Theta4-product}
 \Theta_4(q)=(q;q^2)_\infty(q^4;q^4)_\infty.
\end{equation}
Consequently,
\begin{equation}\label{eq:Theta4-functional}
 \Theta_4(q)=H_4(q)\Theta_4(q^2),
\end{equation}
because
\[
 (-q^2;q^4)_\infty(q^2;q^4)_\infty
 =(q^4;q^8)_\infty.
\]
Writing the negative terms in the bilateral sum with nonnegative indices,
we have
\[
 \Theta_4(q)=\sum_{n\ge0}(-1)^n
 \left(q^{T_{2n}}-q^{T_{2n+1}}\right).
\]
On the other hand, separating the even and odd indices gives
\[
 A_i(q)=\sum_{n\ge0}(-1)^n
 \left(q^{T_{2n}}-i q^{T_{2n+1}}\right).
\]
Combining these two expressions with the definition of $\Phi_4(q)$ gives
\begin{equation}\label{eq:Ai-decomposition}
 \frac{1+i}{2}A_i(q)
 =\Phi_4(q)+\frac{-1+i}{2}\Theta_4(q).
\end{equation}
Substituting \eqref{eq:Ai-decomposition} into \eqref{eq:D-i-A} and using
\eqref{eq:Theta4-functional}, the theta-product terms cancel, leaving
\eqref{eq:D-i-difference}.

We next take $z=\omega$, where $1+\omega+\omega^2=0$.  Since
\[
 (1-\omega x)(1-\omega^2x)=1+x+x^2=\frac{1-x^3}{1-x},
\]
we have
\begin{equation}\label{eq:cubic-product-collapse}
 (q;q^2)_\infty(\omega q;q^2)_\infty
 (\omega^2q;q^2)_\infty=(q^3;q^6)_\infty.
\end{equation}
Thus \eqref{eq:Dz-ramanujan} becomes
\begin{equation}\label{eq:D-omega-A}
 \mathcal D(\omega;q)=-\omega^2
 \left(\frac{A_\omega(q)}{(q^3;q^6)_\infty}
 -A_\omega(q^2)\right),
 \qquad
 A_\omega(q):=\sum_{j\ge0}(-\omega)^j q^{T_j}.
\end{equation}
Introduce
\[
 \Theta_3(q):=\sum_{n\in\mathbb Z}(-1)^n q^{T_{3n}}.
\]
Jacobi's triple product \eqref{eq:jacobi-triple}, with base $q^9$ and
$x=q^6$, gives
\begin{equation}\label{eq:Theta3-product}
 \Theta_3(q)=(q^3;q^3)_\infty
 =(q^3;q^6)_\infty\Theta_3(q^2).
\end{equation}
Moreover,
\[
 \Theta_3(q)=\sum_{n\ge0}(-1)^n
 \left(q^{T_{3n}}-q^{T_{3n+2}}\right).
\]
To compare $A_\omega(q)$ with these series, write $j=3m+r$.  Since
$(-\omega)^3=-1$, the three resulting coefficients are
\[
 -\omega^2(-\omega)^{3m}=-\omega^2(-1)^m,
 \qquad
 -\omega^2(-\omega)^{3m+1}=(-1)^m,
\]
and
\[
 -\omega^2(-\omega)^{3m+2}=-\omega(-1)^m.
\]
Using $-\omega^2=1+\omega$, and then grouping the terms with
$T_{3m}$, $T_{3m+1}$, and $T_{3m+2}$, gives
\begin{equation}\label{eq:Aomega-decomposition}
 -\omega^2 A_\omega(q)=\Phi_3(q)+\omega\Theta_3(q).
\end{equation}
Substitution in \eqref{eq:D-omega-A}, followed by
\eqref{eq:Theta3-product}, cancels the bilateral theta terms and proves
\eqref{eq:D-omega-difference}.

Finally, the symmetry $d(r,n)=d(-r,n)$ gives
\[
 \mathcal D(i;q)=\sum_{n\ge0}
 \left(d_0^{(4)}(n)-d_2^{(4)}(n)\right)q^n
\]
and, since $\omega+\omega^2=-1$,
\[
 \mathcal D(\omega;q)=\sum_{n\ge0}
 \left(d_0^{(3)}(n)-d_1^{(3)}(n)\right)q^n.
\]
\end{proof}

\begin{proof}[Proof of Corollary~\ref{cor:cubic-rank-equidistribution}]
The reciprocal of $(q^3;q^6)_\infty$ is supported on multiples of $3$.
The exponents $T_{3m}$ are congruent to $0$ or $3$ modulo $6$, and the
exponents $T_{3m+1}$ are congruent to $1$ or $4$ modulo $6$.  Hence the first
term on the right of \eqref{eq:D-omega-difference} is supported only on the
residue classes $0,1,3,4$ modulo $6$.  The second term is supported only on
$0$ and $2$ modulo $6$.  Therefore the coefficient of $q^{6n+5}$ in
$\mathcal D(\omega;q)$ is zero.  Since
$d_1^{(3)}(6n+5)=d_2^{(3)}(6n+5)$, we obtain
\[
 d_0^{(3)}(6n+5)=d_1^{(3)}(6n+5)=d_2^{(3)}(6n+5).
\]
Their sum is $\cthreeo(6n+5)$, which proves
\eqref{eq:cubic-equidistribution}.

For the equivalent divisibility statement, put
\[
 P_n(z)=\sum_{r\in\mathbb Z}d(r,6n+5)z^r.
\]
The preceding equality gives $P_n(\omega)=0$.  Since $P_n(z)$ has integer
coefficients and $1+z+z^2$ is the minimal polynomial of $\omega$ over
$\mathbb Q$, the Laurent polynomial $P_n(z)$ is divisible by $1+z+z^2$ in
$\mathbb Z[z,z^{-1}]$.
\end{proof}

\begin{proof}[Proof of Corollary~\ref{cor:cubic-sparse-rank-difference}]
Let $N\equiv2\pmod6$.  As in the preceding proof, the first term on the
right of \eqref{eq:D-omega-difference} is supported only on the residue
classes $0,1,3,4$ modulo $6$, and hence contributes nothing to the
coefficient of $q^N$.

It remains to examine $-\Phi_3(q^2)$.  By the definition of $\Phi_3$,
\[
 \Phi_3(q^2)=\sum_{m\ge0}(-1)^m
 \left(q^{2T_{3m}}+q^{2T_{3m+1}}\right).
\]
The exponents $2T_{3m}=3m(3m+1)$ are congruent to $0$ modulo $6$, whereas
\[
 2T_{3m+1}=(3m+1)(3m+2)\equiv2\pmod6.
\]
Therefore a term of degree $N\equiv2\pmod6$ occurs in
$\Phi_3(q^2)$ precisely when
\[
 N=(3m+1)(3m+2)
\]
for some $m\ge0$, and its coefficient is then $(-1)^m$.  The minus sign in
\eqref{eq:D-omega-difference} changes this coefficient to $(-1)^{m+1}$.
If no such $m$ exists, the coefficient is zero.  This proves
\eqref{eq:cubic-sparse-rank-difference}.
\end{proof}

The three-dimensional positive decomposition and sparse correction are
governed by the following analogous parity splitting.

\begin{lemma}
\label{lem:Cto-split}
Put
\[
Y_1(q)=(-q;q)_\infty^3
\sum_{m\ge0}\frac{\qpoch{q}{q^2}{m}q^m}{\qpoch{-q}{q}{m}^2},
\]
and
\[
Y_2(q)=\qinf{q^2}{q^2}\qinf{q}{q}
\sum_{m\ge0}\frac{\qpoch{q}{q^2}{m}q^m}{\qpoch{q}{q}{m}^2}.
\]
Then
\begin{equation}\label{eq:Y1-positive-split}
 Y_1(q)=E_{3,\triangle}(q)+2C_{3,o}(q),
\end{equation}
and
\begin{equation}\label{eq:Y2-signed-split}
 Y_2(q)=E_{3,\triangle}(q)-2C_{3,o}(q).
\end{equation}
In particular,
\[
 2C_{3,o}(q)=\frac12Y_1(q)-\frac12Y_2(q).
\]
\end{lemma}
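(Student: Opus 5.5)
Mirroring the proof of Lemma~\ref{lem:Co-split}, I would introduce, for $r\ge0$,
\[
 K_r(q):=\frac{\qinf{q^2}{q^2}}{\qinf{q}{q^2}}\,
 \sum_{m\ge0}\frac{\qpoch{q}{q^2}{m}q^m}{\qpoch{q^2}{q^2}{m}}\,
 c_r(q^{m}),
\]
where $c_r$ is chosen so that summing over $r$ with weights $1$ and $(-1)^r$ reproduces the extra factor in $Y_1$ and $Y_2$. Concretely, compared with $T_1$ and $T_2$, the summand of $Y_1$ carries one extra factor $(-q;q)_\infty/(-q;q)_m=(-q^{m+1};q)_\infty$. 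The summand of $Y_2$ carries $(q;q)_\infty/(q;q)_m=(q^{m+1};q)_\infty$.

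\textbf{First step.} I would write $Y_1$ and $Y_2$ as
\[
 \frac{\qinf{q^2}{q^2}}{\qinf{q}{q^2}}\sum_m\frac{\qpoch{q}{q^2}{m}q^m}{\qpoch{q^2}{q^2}{m}}\;
 \frac{(\pm q^{m+1};q)_\infty}{(\mp q^{m+1};q)_\infty}.
\]
The product identities $\qpoch{q^2}{q^2}{m}=\qpoch{q}{q}{m}\qpoch{-q}{q}{m}$ and $\qinf{-q}{q}=1/\qinf{q}{q^2}$ give exactly this. Then I apply the $q$-binomial theorem \eqref{eq:qbinomial} with $a=-1$ and $z=\pm q^{m+1}$:
\[
 \frac{(\pm q^{m+1};q)_\infty}{(\mp q^{m+1};q)_\infty}
 =\sum_{r\ge0}\frac{(-1;q)_r(\pm1)^r q^{r(m+1)}}{\qpoch{q}{q}{r}}.
\]
Interchanging sums gives $Y_1=\sum_r K_r$ and $Y_2=\sum_r(-1)^rK_r$, with
\[
 K_r=\frac{\qinf{q^2}{q^2}}{\qinf{q}{q^2}}\frac{(-1;q)_rq^r}{\qpoch{q}{q}{r}}\,
 \frac{\qinf{q^{r+2}}{q^2}}{\qinf{q^{r+1}}{q^2}}.
\]
The inner $m$-sum is evaluated by \eqref{eq:qbinomial} with base $q^2$, exactly as before.

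\textbf{Second step.} I would evaluate $K_r$ by parity of $r$. For $r=2k+1$, the computation of $H_{2k+1}$ carries over verbatim with the extra factor $(-1;q)_{2k+1}=2(-q;q)_{2k}$. This gives
\[
 K_{2k+1}=\frac{2(-q;q)_{2k}q^{2k+1}}{\qpoch{q}{q^2}{k+1}^2},
\]
which is twice the summand of \eqref{eq:Cto-gf}. For $r=2k$, the computation of $H_{2k}$ gives
\[
 K_{2k}=G(q)^2\frac{(-1;q)_{2k}q^{2k}}{\qpoch{q^2}{q^2}{k}^2},
\]
which is the summand of \eqref{eq:E3triangle-gf}; the case $k=0$ is consistent with $(-1;q)_0=1$. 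Summing the even and odd parts gives \eqref{eq:Y1-positive-split}, taking their difference gives \eqref{eq:Y2-signed-split}, and subtracting the two identities gives the final line.

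\textbf{Main obstacle.} The only real care is in the first step: the $q$-binomial expansion with $a=-1$ must land on the same $H_r$-type structure. Here the $r=0$ and $r=2k$ bookkeeping of $(-1;q)_r$ needs checking. Convergence and the interchange of sums are justified formally in $\mathbb Z[[q]]$, since $q^{r(m+1)+m}$ forces finitely many terms in each degree.
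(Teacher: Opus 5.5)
Your proof is correct and is essentially the paper's own argument. Your $K_r$ is the paper's $J_r=(-1;q)_rH_r$, obtained from the $q$-binomial theorem with $a=-1$, and the parity split reuses the evaluations of $H_{2k+1}$ and $H_{2k}$, with $(-1;q)_{2k+1}=2(-q;q)_{2k}$ producing the $2C_{3,o}$.

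One cosmetic slip: with $a=-1$ and $z=\pm q^{m+1}$, the $q$-binomial theorem gives $(\mp q^{m+1};q)_\infty/(\pm q^{m+1};q)_\infty$. So the ratio in both of your displays is inverted, taking the upper sign for $Y_1$. Your conclusions $Y_1=\sum_rK_r$ and $Y_2=\sum_r(-1)^rK_r$ are nonetheless right.
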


\begin{proof}
Let $H_r(q)$ be the series in \eqref{eq:Hr-definition} and put
\[
 J_r(q):=(-1;q)_rH_r(q).
\]
Interchanging the sums in \eqref{eq:Hr-definition} and applying the
$q$-binomial theorem \eqref{eq:qbinomial} with $a=-1$ gives
\[
 \sum_{r\ge0}\frac{(-1;q)_r q^{r(m+1)}}{(q;q)_r}
 =\frac{(-q^{m+1};q)_\infty}{(q^{m+1};q)_\infty},
\]
whereas replacing $q^{m+1}$ by $-q^{m+1}$ gives
\[
 \sum_{r\ge0}\frac{(-1;q)_r(-q^{m+1})^r}{(q;q)_r}
 =\frac{(q^{m+1};q)_\infty}{(-q^{m+1};q)_\infty}.
\]
Using
\[
 (q^2;q^2)_m=(q;q)_m(-q;q)_m,
 \qquad
 (-q;q)_\infty=\frac1{(q;q^2)_\infty},
\]
the two resulting expressions simplify to
\begin{equation}\label{eq:Y1-Y2-Jr}
 Y_1(q)=\sum_{r\ge0}J_r(q),
 \qquad
 Y_2(q)=\sum_{r\ge0}(-1)^rJ_r(q).
\end{equation}

The formulas \eqref{eq:H-odd} and \eqref{eq:H-even} now identify the odd and
even parts.  For $r=2k+1$,
\begin{align*}
 J_{2k+1}(q)
 &=(-1;q)_{2k+1}
   \frac{q^{2k+1}}{(q;q^2)_{k+1}^2}\\
 &=2(-q;q)_{2k}
   \frac{q^{2k+1}}{(q;q^2)_{k+1}^2}.
\end{align*}
Thus
\begin{equation}\label{eq:J-odd-C3o}
 \sum_{k\ge0}J_{2k+1}(q)=2C_{3,o}(q).
\end{equation}
For $r=2k$,
\[
 J_{2k}(q)=(-1;q)_{2k}G(q)^2
 \frac{q^{2k}}{(q^2;q^2)_k^2},
\]
and therefore, by \eqref{eq:E3triangle-gf},
\begin{equation}\label{eq:J-even-E3triangle}
 \sum_{k\ge0}J_{2k}(q)=E_{3,\triangle}(q).
\end{equation}
Separating the even and odd indices in \eqref{eq:Y1-Y2-Jr}, and then using
\eqref{eq:J-odd-C3o} and \eqref{eq:J-even-E3triangle}, proves
\eqref{eq:Y1-positive-split} and \eqref{eq:Y2-signed-split}.
\end{proof}

\begin{lemma}\label{lem:Y2}
We have
\[
Y_2(q)=\sum_{j\ge0}(-1)^jq^{j(j+1)}.
\]
\end{lemma}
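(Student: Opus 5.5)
The plan is to follow the proof of Lemma~\ref{lem:T2}: apply one Heine transformation to bring $Y_2(q)$ into the shape of the left-hand side of Ramanujan's identity \eqref{eq:ram-ch9-1}, and then evaluate it by that identity with $q$ replaced by $q^{1/2}$. We again work formally in half-powers of $q$. We first factor
\[
(q;q^2)_m=(q^{1/2};q)_m(-q^{1/2};q)_m,
\]
so that
\[
Y_2(q)=(q^2;q^2)_\infty(q;q)_\infty\sum_{m\ge0}\frac{(q^{1/2};q)_m(-q^{1/2};q)_m\,q^m}{(q;q)_m(q;q)_m}.
\]
This is a ${}_2\phi_1$ with $c=q$, in contrast to $c=0$ in Lemma~\ref{lem:T2}; the extra factor $(q;q)_m$ in the denominator is what makes the choice $c=q$ natural.

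Next we apply Heine's transformation \eqref{eq:heine} with
\[
(a,b,c,z)=(-q^{1/2},\,q^{1/2},\,q,\,q).
\]
Then $c/b=q^{1/2}$ and $az=-q^{3/2}$. In the transformed summand, the factor $(z;q)_n=(q;q)_n$ cancels the $(q;q)_n$ in the denominator, so the sum becomes
\[
\frac{(q^{1/2};q)_\infty(-q^{3/2};q)_\infty}{(q;q)_\infty^2}\sum_{n\ge0}\frac{(q^{1/2};q)_n\,q^{n/2}}{(-q^{3/2};q)_n}.
\]

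The third step simplifies the prefactor after multiplying by $(q^2;q^2)_\infty(q;q)_\infty$. Using
\[
(-q^{3/2};q)_\infty=\frac{(-q^{1/2};q)_\infty}{1+q^{1/2}},\qquad
(q^{1/2};q)_\infty(-q^{1/2};q)_\infty=(q;q^2)_\infty,\qquad
(q;q)_\infty=(q;q^2)_\infty(q^2;q^2)_\infty,
\]
the infinite products collapse to $1/(1+q^{1/2})$. The leftover factor $1/(1+q^{1/2})$ combines with $(-q^{3/2};q)_n$ to give $(-q^{1/2};q)_{n+1}$. Therefore
\[
Y_2(q)=\sum_{n\ge0}\frac{(q^{1/2};q)_n\,q^{n/2}}{(-q^{1/2};q)_{n+1}}.
\]
This is exactly the left side of \eqref{eq:ram-ch9-1} with $q$ replaced by $q^{1/2}$. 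Its right side becomes
\[
\sum_{n\ge0}(-1)^n\bigl(q^{1/2}\bigr)^{2n(n+1)}=\sum_{n\ge0}(-1)^nq^{n(n+1)},
\]
which proves the lemma.

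The main obstacle is choosing the Heine parameters so that the $(q;q)_m^2$ denominator is absorbed. The point is to take $c=q$ (so that $(z;q)_n=(q;q)_n$ with $z=q$ cancels) and $b=q^{1/2}$ (so that $c/b=q^{1/2}$ reproduces the numerator of \eqref{eq:ram-ch9-1}). After that, the only care needed is tracking the single factor $1+q^{1/2}$ that shifts $(-q^{3/2};q)_n$ to $(-q^{1/2};q)_{n+1}$. Because the final identity has only integral powers of $q$, the half-power convention of Section~\ref{sec:rank-functions} suffices.
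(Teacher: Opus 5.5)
Your proposal is correct and is the paper's proof: the same Heine transformation with $(a,b,c,z)=(-q^{1/2},q^{1/2},q,q)$, followed by \eqref{eq:ram-ch9-1} with $q$ replaced by $q^{1/2}$. You also track the factor $1+q^{1/2}$ that turns $(-q^{3/2};q)_n$ into $(-q^{1/2};q)_{n+1}$ explicitly, which the paper compresses into the remark that the prefactor cancels as in Lemma~\ref{lem:T2}.
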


\begin{proof}
Again use the formal half-power convention from
Section~\ref{sec:rank-functions}.  Since
\[
\qpoch{q}{q^2}{n}=\qpoch{q^{1/2}}{q}{n}\qpoch{-q^{1/2}}{q}{n},
\]
apply Heine's transformation \eqref{eq:heine} with
\[
(a,b,c,z)=(-q^{1/2},q^{1/2},q,q).
\]
Then
\begin{align*}
Y_2(q)
&=\qinf{q^2}{q^2}\qinf{q}{q}
\sum_{n\ge0}\frac{\qpoch{q}{q^2}{n}q^n}{\qpoch{q}{q}{n}^2}  \\
&=\sum_{n\ge0}\frac{\qpoch{q^{1/2}}{q}{n}q^{n/2}}
{\qpoch{-q^{1/2}}{q}{n+1}}.
\end{align*}
The prefactor cancels for the same reason as in Lemma~\ref{lem:T2}.  Finally,
replacing $q$ by $q^{1/2}$ in \eqref{eq:ram-ch9-1} gives
\[
\sum_{n\ge0}\frac{\qpoch{q^{1/2}}{q}{n}q^{n/2}}
{\qpoch{-q^{1/2}}{q}{n+1}}
=\sum_{j\ge0}(-1)^jq^{j(j+1)}.
\]
\end{proof}

\begin{proof}[Proof of Theorem~\ref{thm:Pthdo-positive-decomposition}]
From \eqref{eq:Pthdo-gf},
\[
\sum_{n\ge0}\pthdo(n)q^n
=q(-q;q)_\infty^3
\sum_{m\ge0}\frac{\qpoch{q}{q^2}{m}q^m}{\qpoch{-q}{q}{m}^2}
=qY_1(q).
\]
Since $\pthdo(0)=0$, division by $q$ gives
\[
 \sum_{N\ge0}\pthdo(N+1)q^N=Y_1(q).
\]
Equation \eqref{eq:Y1-positive-split} now yields
\[
 \sum_{N\ge0}\pthdo(N+1)q^N
 =2C_{3,o}(q)+E_{3,\triangle}(q),
\]
which is \eqref{eq:Pthdo-positive-decomposition}.  Comparing coefficients
gives \eqref{eq:Pthdo-positive-coeff}.
\end{proof}

\begin{proof}[Proof of Theorem~\ref{thm:Cto-Pthdo}]
The preceding proof gives
\[
 \sum_{N\ge0}\pthdo(N+1)q^N=Y_1(q).
\]
By \eqref{eq:Y1-positive-split} and \eqref{eq:Y2-signed-split},
$Y_1(q)-4C_{3,o}(q)=Y_2(q)$.  Lemma~\ref{lem:Y2} therefore gives
\[
 \sum_{N\ge0}\left(\pthdo(N+1)-4\cthreeo(N)\right)q^N
 =\sum_{j\ge0}(-1)^jq^{j(j+1)}.
\]
The theorem follows by comparing coefficients.
\end{proof}

\begin{proof}[Proof of Corollary~\ref{cor:three-color-boxed}]
From the positive decomposition \eqref{eq:Pthdo-positive-coeff},
\[
 e_{3,\triangle}(N)=\pthdo(N+1)-2\cthreeo(N).
\]
Subtracting another $2\cthreeo(N)$ from both sides gives
\begin{equation}\label{eq:three-color-first-equality-proof}
 e_{3,\triangle}(N)-2\cthreeo(N)
 =\pthdo(N+1)-4\cthreeo(N).
\end{equation}

Next specialize Burson's identity \eqref{eq:burson-boxed} by taking
$(m,r)=(2,1)$ and $u=1$.  We obtain
\begin{align*}
&\sum_{M\ge0}
 \left(
 \sum_{(k,\overline\pi)\in\mathcal B_{2,1}(M)}
 (-1)^{\nu(\overline\pi)}
 \right)q^M\\
&\qquad=\sum_{j\ge0}(-1)^jq^{j(2j+2)}
 =\sum_{j\ge0}(-1)^jq^{2j(j+1)}.
\end{align*}
Taking the coefficient of $q^{2N}$ gives
\begin{align*}
 \sum_{(k,\overline\pi)\in\mathcal B_{2,1}(2N)}
 (-1)^{\nu(\overline\pi)}
 &= [q^{2N}]\sum_{j\ge0}(-1)^jq^{2j(j+1)}\\
 &= [q^N]\sum_{j\ge0}(-1)^jq^{j(j+1)}.
\end{align*}
By Theorem~\ref{thm:Cto-Pthdo}, the last coefficient is
$\pthdo(N+1)-4\cthreeo(N)$.  Together with
\eqref{eq:three-color-first-equality-proof}, this proves all equalities in
\eqref{eq:three-color-boxed}.

For completeness, Burson's involution pairs every nonfixed object with an
object of the same weight and opposite sign.  Its fixed point indexed by
$j$ is the pair $(j,(2j+1)^j)$.  In the present specialization its total
boxed weight is
\[
 j+j(2j+1)=2j(j+1),
\]
and its sign is $(-1)^j$.  Hence a fixed point contributes to the
coefficient of $q^{2N}$ exactly when $N=j(j+1)$, in agreement with the
sparse correction in Theorem~\ref{thm:Cto-Pthdo}.
\end{proof}

\begin{proof}[Proof of Corollary~\ref{cor:Pthdo-mod4}]
The case $N=0$ is immediate from the convention $\pthdo(0)=0$.  Now let
$N\ge1$.  Replacing $n$ by $N-1$ in
\eqref{eq:Pthdo-theta-coeff} gives
\begin{equation}\label{eq:Pthdo-mod4-proof-start}
 \pthdo(N)=4\cthreeo(N-1)+\varepsilon(N-1),
\end{equation}
where
\[
 \varepsilon(K)=
 \begin{cases}
 (-1)^j,&K=j(j+1)\text{ for some }j\ge0,\\
 0,&\text{otherwise.}
 \end{cases}
\]
Reducing \eqref{eq:Pthdo-mod4-proof-start} modulo $4$ eliminates the first
term and yields
\[
 \pthdo(N)\equiv\varepsilon(N-1)\pmod4.
\]

The condition $N-1=j(j+1)$ is equivalent to
\[
 4N-3=4j(j+1)+1=(2j+1)^2.
\]
Conversely, if $4N-3=x^2$ for a positive integer $x$, then $x$ is odd.
Writing $x=2j+1$ with $j\ge0$ gives
\[
 N-1=\frac{x^2-1}{4}=j(j+1).
\]
Thus the sparse correction is nonzero exactly when $4N-3$ is a positive
square.

In that case, with $x=2j+1$, one has $x\equiv1\pmod4$ when $j$ is even
and $x\equiv3\pmod4$ when $j$ is odd.  Therefore
\[
 \chi_4(x)=(-1)^j=\varepsilon(N-1),
\]
and hence $\pthdo(N)\equiv\chi_4(x)\pmod4$.  If $4N-3$ is not a square,
then $\varepsilon(N-1)=0$, so \eqref{eq:Pthdo-mod4-proof-start} gives
$\pthdo(N)\equiv0\pmod4$.  This proves both cases of the corollary.
\end{proof}

\begin{proof}[Proof of Corollary~\ref{cor:cubic-congruence-dissection}]
We first prove \eqref{eq:Cto-cubic-mod3}.  By the symmetry obtained by
interchanging the two side arms,
\[
 d_1^{(3)}(N)=d_2^{(3)}(N).
\]
Since the three rank classes exhaust all restricted odd three-dimensional
corners of weight $N$, it follows that
\begin{align*}
 \cthreeo(N)
 &=d_0^{(3)}(N)+d_1^{(3)}(N)+d_2^{(3)}(N)\\
 &=d_0^{(3)}(N)+2d_1^{(3)}(N)\\
 &\equiv d_0^{(3)}(N)-d_1^{(3)}(N)\pmod3.
\end{align*}
For $N\equiv2\pmod6$, Corollary~\ref{cor:cubic-sparse-rank-difference}
therefore gives
\[
 \cthreeo(N)\equiv
 \begin{cases}
 (-1)^{m+1},&N=(3m+1)(3m+2)\text{ for some }m\ge0,\\
 0,&\text{otherwise}
 \end{cases}
 \pmod3,
\]
which is \eqref{eq:Cto-cubic-mod3}.

We next prove \eqref{eq:Pthdo-cubic-mod12}.  Let $N\equiv3\pmod6$ and put
$K=N-1$, so that $K\equiv2\pmod6$.  Applying
\eqref{eq:Pthdo-theta-coeff} with $K$ in place of $n$ gives
\begin{equation}\label{eq:Pthdo-cubic-proof-start}
 \pthdo(N)=4\cthreeo(K)+\varepsilon(K),
\end{equation}
where $\varepsilon(K)=(-1)^j$ if $K=j(j+1)$ and is zero otherwise.

Suppose first that
\[
 K=(3m+1)(3m+2)
\]
for some $m\ge0$.  Then $K=j(j+1)$ with $j=3m+1$, and therefore
\[
 \varepsilon(K)=(-1)^{3m+1}=(-1)^{m+1}.
\]
The first part of the proof also gives
\[
 \cthreeo(K)\equiv(-1)^{m+1}\pmod3.
\]
Thus there is an integer $t$ such that
\[
 \cthreeo(K)=(-1)^{m+1}+3t.
\]
Substitution in \eqref{eq:Pthdo-cubic-proof-start} yields
\[
 \pthdo(N)=5(-1)^{m+1}+12t,
\]
and hence
\[
 \pthdo(N)\equiv5(-1)^{m+1}\pmod{12}.
\]

Now suppose that $K$ is not of the displayed form.  Because
$K\equiv2\pmod6$, any representation $K=j(j+1)$ would force
$j(j+1)\equiv2\pmod3$, and hence $j\equiv1\pmod3$.  Thus one would have
$j=3m+1$ and consequently $K=(3m+1)(3m+2)$, contrary to the assumption.
Therefore $\varepsilon(K)=0$.  The first part of the proof gives
$\cthreeo(K)\equiv0\pmod3$, so $4\cthreeo(K)$ is divisible by $12$.
Equation \eqref{eq:Pthdo-cubic-proof-start} now gives
$\pthdo(N)\equiv0\pmod{12}$.  This completes the proof.
\end{proof}

The proofs also explain the coefficients in the two discrepancy formulas.
The factor $2$ in Theorem~\ref{thm:Co-Ptdo} comes from the odd-index
projector $(1-(-1)^r)/2$ in Lemma~\ref{lem:Co-split}.  In the
three-dimensional case, the same projector is accompanied by the additional
factor $2$ in
\[
 (-1;q)_{2m+1}=2(-q;q)_{2m},
\]
which produces the coefficient $4$ in Theorem~\ref{thm:Cto-Pthdo}.  Thus
these coefficients arise from the parity-splitting transformations and are
not arbitrary normalizations.

\section{Proofs of the results in Section 7}
\label{sec:proofs-arithmetic}

We give the proofs in the order in which the principal results are stated.
The first auxiliary lemma shows that absence from the binary support also
excludes the unary correction support.

\begin{lemma}\label{lem:S-in-R}
For every $M\ge2$, one has $S_M\subseteq R_M$.
\end{lemma}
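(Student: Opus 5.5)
We need to show that every $j(j+1)$ modulo $M$ can be written as $T_{2a+1}+T_b$ modulo $M$. The plan is to find an explicit integer identity expressing $j(j+1)=2T_j$ as $T_{2a+1}+T_b$, possibly up to multiples of $M$.

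The simplest attempt is to take $b$ so that $T_b\equiv j(j+1)-T_{2a+1}$, choosing $a$ freely. Multiplying by $8$ and adding $2$ gives
\[
8(T_{2a+1}+T_b)+2=(4a+3)^2+(2b+1)^2,
\]
while $8j(j+1)+2=2(2j+1)^2=(2j+1)^2+(2j+1)^2$. So we need $(2j+1)^2+(2j+1)^2$ to equal $(4a+3)^2+(2b+1)^2$. Both squares are odd, and $2j+1$ is congruent to $1$ or $3$ modulo $4$. If $2j+1\equiv3\pmod4$, take $4a+3=2j+1$ and $2b+1=2j+1$, that is, $a=(j-1)/2$ and $b=j$. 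Then $T_{2a+1}+T_b=T_j+T_j=j(j+1)$, an exact integer identity.

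If $2j+1\equiv1\pmod4$, so $j$ is even, we instead work modulo $M$. Replace $j$ by $j'=j+kM$ with $j'$ odd; then $j'(j'+1)\equiv j(j+1)\pmod M$, and we are back in the first case. This works whenever $M$ is odd, since then $k=1$ flips the parity of $j$.

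For even $M$, shifting by $M$ preserves parity, so we need another trick. Use the reflection $j\mapsto -1-j$: since $j(j+1)=(-1-j)(-j)$, take $j'=M-1-j\ge0$ (adding $M$ again if necessary to keep $j'$ nonnegative). Then $j'(j'+1)\equiv(j+1)j\pmod M$, and when $M$ is even $j'$ has the opposite parity to $j$. Combining the two cases, every residue of $S_M$ is represented by $j(j+1)$ with $j$ odd and $j\ge0$. The exact identity $j(j+1)=T_{2a+1}+T_j$ with $a=(j-1)/2$ then gives membership in $R_M$.

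The only delicate point is ensuring the representative is nonnegative and odd; both cases are handled by the explicit choices above. So the whole proof reduces to the identity $j(j+1)=2T_j$ together with the fact that $2a+1=j$ is a legitimate index when $j$ is odd.
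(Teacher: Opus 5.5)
Your argument is correct and is essentially the paper's proof: you choose an odd nonnegative $t$ with $t\equiv j$ or $t\equiv -j-1\pmod M$ (shifting by $M$ when $M$ is odd, reflecting when $M$ is even), and then use $t(t+1)=2T_t=T_{2a+1}+T_b$ with $2a+1=b=t$. The sum-of-two-squares discussion at the start is only motivation and can be dropped without affecting the proof.
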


\begin{proof}
Fix $j$.  There is an odd nonnegative integer $t$ satisfying
$t\equiv j$ or $t\equiv-j-1\pmod M$.  If $M$ is odd, adding $M$ changes
parity; if $M$ is even, the two residue classes have opposite parity.  Write
$t=2a+1$ and take $b=t$.  Then
\[
T_{2a+1}+T_b=2T_t=t(t+1)\equiv j(j+1)\pmod M.
\]
Thus every residue represented by $j(j+1)$ is also represented by
$T_{2a+1}+T_b$.
\end{proof}

\begin{proof}[Proof of Theorem~\ref{thm:Cto-support-congruences}]
Set
\[
C_{3,o}(q)=\sum_{N\ge0}\cthreeo(N)q^N,
\qquad
P(q)=\qinf{q}{q^2},
\]
and
\[
A(q)=\sum_{j\ge0}(-1)^jq^{j(j+1)/2},
\qquad
B(q)=\sum_{j\ge0}(-1)^jq^{j(j+1)}.
\]
Putting $a=1$ in \eqref{eq:ram-part2-3} and using
\eqref{eq:Cto-gf} gives
\begin{equation}\label{eq:Cto-master}
2C_{3,o}(q)=\frac{A(q)}{P(q)^3}-B(q).
\end{equation}

We first prove the congruences modulo $3$.  Since
\[
P(q)^3=\qinf{q}{q^2}^3\equiv\qinf{q^3}{q^6}\pmod3,
\]
we have
\[
\frac{A(q)}{P(q)^3}
\equiv
\frac{A(q)}{\qinf{q^3}{q^6}}
\pmod3.
\]
The reciprocal of $\qinf{q^3}{q^6}$ contributes only powers divisible by
$3$, while
\[
\frac{j(j+1)}2\equiv0,1\pmod3.
\]
Hence $A(q)/P(q)^3$ has zero coefficient modulo $3$ in every degree
congruent to $2$ modulo $3$.  Suppose now that $3\mid M$,
$r\equiv2\pmod3$, and $r\notin S_M$.  Every integer $Mn+r$ is congruent to
$2$ modulo $3$, and $B(q)$ has no term in any degree congruent to $r$ modulo
$M$.  Taking the coefficient of $q^{Mn+r}$ in \eqref{eq:Cto-master} gives
\[
2\cthreeo(Mn+r)\equiv0\pmod3,
\]
and therefore
\[
\cthreeo(Mn+r)\equiv0\pmod3.
\]

We next prove the congruence modulo $2$.  Let
\[
U(q)=\sum_{j\ge0}q^{j(j+1)/2}
\]
and let
\[
V(q)=\sum_{j\ge0}q^{(2j+1)(2j+2)/2}
\]
be its odd-index subseries.  Then $A(q)=U(q)-2V(q)$.  By
\eqref{eq:gauss},
\[
U(q)=\frac{\qinf{q^2}{q^2}}{P(q)},
\]
so
\[
\frac{U(q)}{P(q)^3}=\frac{\qinf{q^2}{q^2}}{P(q)^4}.
\]
Modulo $4$,
\[
P(q)^4\equiv\qinf{q^2}{q^4}^2,
\]
and hence
\[
\frac{U(q)}{P(q)^3}
\equiv
\frac{\qinf{q^2}{q^2}}{\qinf{q^2}{q^4}^2}
=\frac{\qinf{q^4}{q^4}}{\qinf{q^2}{q^4}}
=U(q^2)
\pmod4.
\]
Moreover,
\[
\frac{1}{P(q)^3}\equiv U(q)\pmod2.
\]
Indeed, by \eqref{eq:gauss} this is equivalent to
\[
\frac{1}{\qinf{q}{q^2}^2}\equiv\qinf{q^2}{q^2}\pmod2,
\]
which follows from
\[
\qinf{q}{q^2}^2\equiv\qinf{q^2}{q^4}\pmod2
\]
and
\[
\frac{1}{\qinf{q^2}{q^4}}\equiv\qinf{q^2}{q^2}\pmod2.
\]
Consequently,
\begin{equation}\label{eq:Cto-mod4-A}
\frac{A(q)}{P(q)^3}
\equiv U(q^2)-2V(q)U(q)\pmod4.
\end{equation}
The support modulo $M$ of $U(q^2)$ is $S_M$, and that of $V(q)U(q)$ is
$R_M$.  If $r\notin R_M$, Lemma~\ref{lem:S-in-R} also gives $r\notin S_M$.
Thus neither term on the right of \eqref{eq:Cto-mod4-A}, nor the series
$B(q)$ in \eqref{eq:Cto-master}, contributes in a degree congruent to $r$
modulo $M$.  It follows that
\[
2\cthreeo(Mn+r)\equiv0\pmod4,
\]
and therefore
\[
\cthreeo(Mn+r)\equiv0\pmod2.
\]

It remains to transfer these conclusions to the three-color companion.  By
Theorem~\ref{thm:Cto-Pthdo},
\begin{equation}\label{eq:Pthdo-transfer-support}
\pthdo(Mn+r+1)=4\cthreeo(Mn+r)+\varepsilon(Mn+r),
\end{equation}
where $\varepsilon(K)$ is zero unless $K=j(j+1)$ for some $j\ge0$.
If $r\notin R_M$, then $r\notin S_M$ by Lemma~\ref{lem:S-in-R}; hence the
correction in \eqref{eq:Pthdo-transfer-support} vanishes and the parity
congruence for $\cthreeo(Mn+r)$ gives
\[
\pthdo(Mn+r+1)\equiv0\pmod8.
\]
If $3\mid M$, $r\equiv2\pmod3$, and $r\notin S_M$, the correction again
vanishes and the congruence modulo $3$ gives
\[
\pthdo(Mn+r+1)\equiv0\pmod{12}.
\]
When both hypotheses hold, the corresponding congruences combine to moduli
$6$ and $24$.  This proves the theorem.
\end{proof}

We now prove Theorem~\ref{thm:all-modulus-support} by reducing the
support problem to prime powers.  The next three lemmas give the local--global
reduction, the power-of-two case, and the required criterion at odd prime
powers.

\begin{lemma}\label{lem:R-local-global}
Let $M_1,\ldots,M_k$ be pairwise coprime positive integers and
$M=M_1\cdots M_k$.  A residue $r$ belongs to $R_M$ if and only if its
reduction belongs to $R_{M_i}$ for every $i$.
\end{lemma}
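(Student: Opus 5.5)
The plan is to recast membership in $R_M$ as solvability of a polynomial congruence in two integer unknowns, and then apply the Chinese remainder theorem. Since $T_j$ depends only on $j$ modulo $2M$, and the condition ``$a,b\ge 0$'' can be relaxed to ``$a,b\in\mathbb Z$'', we can work with residues directly. This relaxation is harmless because $T_{-j-1}=T_j$; in particular $T_{2a+1}$ for negative $a$ equals $T_{-2a-2}$, an even index, so the relaxation needs some care.

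To handle that, we reparametrize. The set $\{T_{2a+1}:a\ge0\}$ consists of the values $T_t$ with $t\equiv 1\pmod 2$ and $t\ge1$. By the symmetry $T_t=T_{-t-1}$, this is the same as the set of values $T_t$ with $t\equiv 1$ or $t\equiv 2\pmod 4$ ranging over all integers. Equivalently, it is $\{u(u+1)/2 : u\in\mathbb Z,\ u\equiv1,2\pmod 4\}$. The second summand $T_b$ ranges over all triangular values, with $b\in\mathbb Z$ arbitrary. Thus
\[
r\in R_M \iff \exists\,u,b\in\mathbb Z:\ u\equiv 1,2\ (\mathrm{mod}\ 4),\ T_u+T_b\equiv r\ (\mathrm{mod}\ M).
\]

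With this description in hand, the result follows from the Chinese remainder theorem, in two directions.

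\textbf{Necessity.} Suppose $r\in R_M$, witnessed by $a,b\ge0$. Reducing the congruence $T_{2a+1}+T_b\equiv r\pmod M$ modulo each $M_i$ shows that the reduction of $r$ lies in $R_{M_i}$, using the same $a,b$.

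\textbf{Sufficiency.} Suppose that for each $i$ there are $a_i,b_i\ge0$ with $T_{2a_i+1}+T_{b_i}\equiv r\pmod{M_i}$. Since $T_j$ modulo $M_i$ depends only on $j$ modulo $2M_i$, we would like to choose a single $b$ with $b\equiv b_i\pmod{2M_i}$ for all $i$. These moduli are not pairwise coprime when several $M_i$ are even, but at most one $M_i$ is even because the $M_i$ are pairwise coprime. So we instead use the moduli $2^{e}M_i'$, where the factor $2$ is attached only to the even $M_i$, or to one chosen index if all $M_i$ are odd.

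\textbf{Main obstacle: the odd factors.} For odd $M_i$, $T_j$ modulo $M_i$ depends only on $j$ modulo $M_i$, because $2$ is invertible and $T_j\equiv 2^{-1}j(j+1)$. So for odd moduli we only need $b\equiv b_i\pmod{M_i}$. For the first summand we need $2a+1\equiv 2a_i+1\pmod{M_i}$. This is solvable in $a$ for odd $M_i$, since $2$ is invertible modulo $M_i$.

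\textbf{The even factor.} If $M_j$ is the even factor, we need $2a+1\equiv 2a_j+1\pmod{2M_j}$, that is, $a\equiv a_j\pmod{M_j}$. This is compatible by CRT with the odd conditions $a\equiv a_i\pmod{M_i}$, because the $M_i$ are pairwise coprime. Similarly, $b\equiv b_j\pmod{2M_j}$ combined with $b\equiv b_i\pmod{M_i}$ for the odd $M_i$ uses pairwise coprime moduli, since $2M_j$ is coprime to each odd $M_i$.

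Taking nonnegative representatives $a,b$ then gives $T_{2a+1}+T_b\equiv r$ modulo each $M_i$, and hence modulo $M$. The care needed to get the parity bookkeeping right at the unique even factor is the only step I expect to be delicate; the rest is routine.
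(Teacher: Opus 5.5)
Your sufficiency argument is correct and is essentially the paper's proof. Both choose $a$ by the Chinese remainder theorem from $a\equiv a_i \pmod{M_i}$. Both also handle the period $2M_i$ of $T_b$ modulo $M_i$ by noting that this period drops to $M_i$ when $M_i$ is odd. The paper uses that fact to give all $b_i$ a common parity and then applies a generalized Chinese remainder theorem to the moduli $2M_i$. You instead impose $b\equiv b_i$ modulo $M_i$ for odd $M_i$ and modulo $2M_j$ for the at most one even $M_j$, which needs only the ordinary theorem.

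Your reparametrization paragraph, however, is false and should be deleted; nothing later in the argument uses it. The values $T_u$ with $u\equiv 1,2\pmod 4$ are exactly $T_{4k+1}$ and $T_{4k+2}$ for $k\ge 0$. So, for instance, $T_3=6$ is missing from your set and $T_2=3$ is extra. The worry that prompted the paragraph is also unfounded. Since $T_{2a+1}=2a^2+3a+1$ is a polynomial in $a$, any $a\in\mathbb{Z}$ may be replaced modulo $M$ by a nonnegative representative. That is exactly what your final step, and the paper, does.
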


\begin{proof}
The forward implication is immediate.  Conversely, choose $a_i,b_i$ such that
\[
T_{2a_i+1}+T_{b_i}\equiv r\pmod{M_i}.
\]
Since
\[
T_{2a+1}=2a^2+3a+1
\]
is an integral polynomial in $a$, the ordinary Chinese remainder theorem
provides $a$ whose reduction modulo each $M_i$ is $a_i$.  The triangular
number $T_b$ has period $2M_i$ modulo $M_i$.  For every odd $M_i$, replacing
$b_i$ by $b_i+M_i$ preserves $T_{b_i}$ modulo $M_i$ and reverses the parity of
$b_i$.  We may therefore choose the representatives $b_i$ to have one common
parity.  The generalized Chinese remainder theorem then solves
\[
b\equiv b_i\pmod{2M_i}
\]
simultaneously, because the pairwise greatest common divisors of the moduli
$2M_i$ are $2$ and the residues have the same parity.  This $a,b$ represents
$r$ modulo every $M_i$, hence modulo $M$.
\end{proof}

\begin{lemma}\label{lem:R-two-power}
For every $\alpha\ge1$, the set $R_{2^\alpha}$ is the full residue system
modulo $2^\alpha$.
\end{lemma}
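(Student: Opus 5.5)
The plan is to take $a=0$ throughout. Since $T_1=1$, it suffices to show that for every residue $r$ modulo $2^\alpha$ there is a $b\ge0$ with
\[
 T_b\equiv r-1\pmod{2^\alpha}.
\]
So the lemma reduces to the classical statement that the triangular numbers represent every residue class modulo a power of $2$. In fact the odd-$T$ term in the definition of $R_M$ plays no role at $p=2$.

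To prove this surjectivity I will use the identity $8T_b+1=(2b+1)^2$. Fix a target residue $s$ and consider the odd residue $8s+1$ modulo $2^{\alpha+3}$. The key input is the standard description of odd squares modulo $2^k$ for $k\ge3$: they are exactly the residues $\equiv1\pmod8$. This follows by Hensel-type lifting. If $x^2\equiv u\pmod{2^k}$ with $x$ odd, then either $x$ or $x+2^{k-1}$ satisfies the congruence modulo $2^{k+1}$, because
\[
 (x+2^{k-1})^2=x^2+2^kx+2^{2k-2},
\]
and $2^kx\equiv2^k\pmod{2^{k+1}}$ while $2k-2\ge k+1$.

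Hence there is an odd $x=2b+1$ with $x^2\equiv8s+1\pmod{2^{\alpha+3}}$. Then $8T_b\equiv8s\pmod{2^{\alpha+3}}$, and dividing by $8$ gives $T_b\equiv s\pmod{2^\alpha}$. Taking $s=r-1$ finishes the argument.

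An alternative route avoids squares entirely. One shows that $b\mapsto T_b$ is injective on $\{0,1,\ldots,2^\alpha-1\}$ modulo $2^\alpha$, using
\[
 T_b-T_c=\tfrac12(b-c)(b+c+1).
\]
Here exactly one of $b-c$ and $b+c+1$ is odd, and the even one has $2$-adic valuation at most $\alpha$. So $T_b-T_c$ has valuation at most $\alpha-1<\alpha$ unless $b=c$, and pigeonhole then gives surjectivity. I would present whichever is shorter, probably this injectivity argument.

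The only step requiring care is the valuation bookkeeping, in either version. With the lifting argument it is checking that the exponent bound $2k-2\ge k+1$ holds for $k\ge3$. With the injectivity argument it is checking that $|b-c|<2^\alpha$ and $b+c+1<2^{\alpha+1}$ keep the even factor's valuation at most $\alpha$. Everything else is immediate.
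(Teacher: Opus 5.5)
Your proposal is correct. Your first route is essentially the paper's proof. The paper also lifts $8r+1$ to an odd square modulo $2^{\alpha+3}$ by the step $x\mapsto x$ or $x+2^{k-1}$, and then uses $8T_b+1=(2b+1)^2$. The only cosmetic difference is the choice of $a$: the paper takes $a=2^\alpha-1$, so that $T_{2a+1}=2^\alpha(2^{\alpha+1}-1)\equiv0\pmod{2^\alpha}$, whereas you take $a=0$ and aim for $r-1$.

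Your second route is genuinely different and also correct. It shows that $b\mapsto T_b$ is injective on $\{0,\dots,2^\alpha-1\}$ modulo $2^\alpha$ and then applies pigeonhole. Since $b-c$ and $b+c+1$ have opposite parity, $v_2(T_b-T_c)$ is one less than the valuation of the even factor, and your size bounds make this at most $\alpha-1$. This route is more elementary and self-contained, with no lifting and no base case modulo $8$. It also gives the slightly sharper fact that $T_0,\dots,T_{2^\alpha-1}$ is a complete residue system modulo $2^\alpha$.

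The paper's square-lifting version has a different merit: it stays in the same ``$8T+1$ is a square'' language the paper uses at odd primes. There membership in $R_{p^\alpha}$ is tested by asking whether $8r+2$ is a sum of two squares. The $2$-primary and odd-primary cases are therefore treated in parallel.
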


\begin{proof}
We first recall that triangular numbers are surjective modulo $2^\alpha$.
For a prescribed residue $r$, the integer $8r+1$ is congruent to $1$ modulo
$8$.  Every integer congruent to $1$ modulo $8$ is an odd square modulo every
higher power of $2$: starting with a solution modulo $8$, a solution $x$
modulo $2^k$ lifts to either $x$ or $x+2^{k-1}$ modulo $2^{k+1}$, since these
two squares differ by $2^k$ modulo $2^{k+1}$.  Hence there is an odd $x$ with
\[
x^2\equiv8r+1\pmod{2^{\alpha+3}}.
\]
Writing $x=2b+1$ and dividing by $8$ gives $T_b\equiv r\pmod{2^\alpha}$.
Now take $a=2^\alpha-1$.  Then
\[
T_{2a+1}=T_{2^{\alpha+1}-1}
=2^\alpha(2^{\alpha+1}-1)\equiv0\pmod{2^\alpha},
\]
so every residue belongs to $R_{2^\alpha}$.
\end{proof}

\begin{lemma}\label{lem:two-squares-prime-powers}
Let $p$ be an odd prime and $\alpha\ge1$.  If $p\equiv1\pmod4$, then
every residue modulo $p^\alpha$ is a sum of two squares.  If
$p\equiv3\pmod4$ and $t=v_p(A)<\alpha$, then $A$ is a sum of two squares
modulo $p^\alpha$ if and only if $t$ is even; residues divisible by
$p^\alpha$ are also sums of two squares.
\end{lemma}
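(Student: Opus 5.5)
We will prove Lemma~\ref{lem:two-squares-prime-powers} by first counting modulo $p$ and then lifting with Hensel's lemma. Throughout, $p$ is odd, so the map $x\mapsto x^2$ on $(\mathbb Z/p^\alpha)^\times$ is well understood.

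\emph{Units.} First we show that every residue $A$ prime to $p$ is a sum of two squares modulo $p^\alpha$. Modulo $p$, the sets $\{x^2\}$ and $\{A-y^2\}$ each contain $(p+1)/2$ residues. By pigeonhole they intersect, so $x^2+y^2\equiv A\pmod p$. Since $A\not\equiv0$, at least one of $x,y$, say $x$, is nonzero modulo $p$. Then $f(X)=X^2+y^2-A$ has $f'(x)=2x\not\equiv0$, and Hensel's lemma lifts $x$ to a solution modulo $p^\alpha$. This step is uniform in $p$.

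\emph{Case $p\equiv1\pmod4$.} Write $A=p^tu$ with $p\nmid u$ and $t<\alpha$; residues with $t\ge\alpha$ are $0=0^2+0^2$. Here $-1$ is a square modulo $p$, hence modulo $p^\alpha$ by Hensel. Choose $c$ with $c^2\equiv-1$. The identity $(x+cy)(x-cy)\equiv x^2+y^2$ lets us solve $x+cy\equiv A$ and $x-cy\equiv1$, which is possible since $2$ and $2c$ are invertible. Hence every residue is a sum of two squares, and the units step is not even needed in this case.

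\emph{Case $p\equiv3\pmod4$, with $t=v_p(A)<\alpha$.}

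\emph{Sufficiency.} If $t=2s$ is even, write $A=p^{2s}u$ with $u$ a unit. By the units step, $u\equiv x^2+y^2\pmod{p^{\alpha-2s}}$. Then
\[
(p^sx)^2+(p^sy)^2=p^{2s}(x^2+y^2)\equiv p^{2s}u\pmod{p^\alpha}.
\]

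\emph{Necessity.} Suppose $x^2+y^2\equiv A\pmod{p^\alpha}$ with $t<\alpha$. Let $s=\min(v_p(x),v_p(y))$, taking $v_p(0)=\infty$.

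If $2s\ge\alpha$, then the left side is $0$ modulo $p^\alpha$, contradicting $t<\alpha$. So $2s<\alpha$. Write $x=p^sx'$ and $y=p^sy'$ with $x'$ or $y'$ a unit. If $x'^2+y'^2\equiv0\pmod p$, then $-1$ would be a square modulo $p$, since one of $x',y'$ is invertible. This is impossible for $p\equiv3\pmod4$. Hence $v_p(x^2+y^2)=2s<\alpha$.

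Because the congruence holds modulo $p^\alpha$ and both valuations are below $\alpha$, $v_p(A)=2s$, which is even. The last clause, that residues divisible by $p^\alpha$ are sums of two squares, is trivial via $0^2+0^2$.

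\emph{Main obstacle.} The main point of care is the necessity direction. One must make sure that the valuation comparison is legitimate when both sides are reduced modulo $p^\alpha$. This is exactly where the hypothesis $t<\alpha$ enters, together with the bound $2s<\alpha$ derived above.
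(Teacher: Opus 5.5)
Your proposal is correct and follows the paper's proof essentially step for step. It uses the same explicit choice $x=(A+1)/2$, $y=(A-1)/(2c)$ with $c^2\equiv-1$ when $p\equiv1\pmod4$, the same pigeonhole-plus-Hensel argument for units, the same scaling by $p^{2s}$ for sufficiency, and the same minimal-valuation argument for necessity. Your explicit check that $2s<\alpha$ (so the valuation of $A$ transfers to $x^2+y^2$ because $t<\alpha$) spells out a step the paper states only tersely.
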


\begin{proof}
Suppose first that $p\equiv1\pmod4$.  Then $-1$ has a square root $i$ modulo
$p$, and Hensel lifting gives $i^2\equiv-1\pmod {p^\alpha}$.  For any residue
$A$, put
\[
x=\frac{A+1}{2},\qquad y=\frac{A-1}{2i}\pmod {p^\alpha}.
\]
Then $x^2+y^2\equiv A\pmod {p^\alpha}$.

Now suppose $p\equiv3\pmod4$.  Every unit $U$ modulo $p^\beta$ is a sum of two
squares.  Modulo $p$, the two sets $\{x^2\}$ and $\{U-y^2\}$ each have
$(p+1)/2$ elements and hence intersect.  The resulting solution has at least
one nonzero partial derivative, so it lifts to every $p^\beta$.

If $A=p^{2k}U$ with $U$ a unit and $2k<\alpha$, represent $U$ modulo
$p^{\alpha-2k}$ as $X^2+Y^2$ and multiply by $p^{2k}$.  This proves
representability when $v_p(A)$ is even, while $A\equiv0\pmod {p^\alpha}$ is
represented by $0^2+0^2$.

Conversely, suppose $x^2+y^2\equiv A\pmod {p^\alpha}$ and
$t=v_p(A)<\alpha$.  Let
$k=\min(v_p(x),v_p(y))$.  After dividing by $p^k$, at least one of the two
new variables is nonzero modulo $p$, and their sum of squares is nonzero
modulo $p$, since $-1$ is a nonsquare modulo $p$.  Hence
$v_p(x^2+y^2)=2k$.  Since $x^2+y^2\equiv A\pmod {p^\alpha}$ and
$t<\alpha$, we have
$t=2k$, which is even.
\end{proof}

\begin{proof}[Proof of Theorem~\ref{thm:all-modulus-support}]
By Lemma~\ref{lem:R-local-global}, it is enough to classify the image modulo
prime powers.  Lemma~\ref{lem:R-two-power} shows that the power of $2$ creates
no obstruction.

Let $p$ be odd.  Since $2$ and $4$ are invertible modulo $p^\alpha$, one has
\[
r\in R_{p^\alpha}
\quad\Longleftrightarrow\quad
8r+2\text{ is a sum of two squares modulo }p^\alpha.
\]
Indeed,
\[
8T_{2u+1}+1=(4u+3)^2,
\qquad
8T_v+1=(2v+1)^2,
\]
and the residues $4u+3$ and $2v+1$ run through all classes modulo
$p^\alpha$.

If $p\equiv1\pmod4$, Lemma~\ref{lem:two-squares-prime-powers} shows that
$R_{p^\alpha}$ is full.  If $p\equiv3\pmod4$, the same lemma shows that
$8r+2=2(4r+1)$ fails to be a sum of two squares exactly when
$v_p(4r+1)$ is odd and less than $\alpha$.  Combining the local conditions by
Lemma~\ref{lem:R-local-global} proves the equivalence.

If every prime $p\equiv3\pmod4$ has exponent at most $1$, no local
obstruction is possible.  Conversely, if $p^\alpha\parallel M$ with
$p\equiv3\pmod4$ and $\alpha\ge2$, choose $r$ modulo $p^\alpha$ so that
$4r+1\equiv p\pmod{p^\alpha}$ and choose its residues arbitrarily at the other
prime-power factors.  The Chinese remainder theorem then gives a class modulo
$M$ with $v_p(4r+1)=1<\alpha$, so $R_M$ is not full.
\end{proof}

\begin{proof}[Proof of Corollary~\ref{cor:Cto-prime-powers}]
The valuation hypothesis and Theorem~\ref{thm:all-modulus-support} give
$r\notin R_{p^\alpha}$.  Part~(1) of
Theorem~\ref{thm:Cto-support-congruences} therefore gives the congruences
modulo $2$ and $8$.

If $p=3$, the divisibility $3\mid4r+1$ implies $r\equiv2\pmod3$.  Moreover,
$r\notin R_{3^\alpha}$ and Lemma~\ref{lem:S-in-R} imply
$r\notin S_{3^\alpha}$.
Part~(2) of Theorem~\ref{thm:Cto-support-congruences} then gives the
congruences modulo $3$ and $12$.  Combining the two pairs yields the stated
moduli $6$ and $24$.
\end{proof}

\clearpage

\section{Concluding remarks}

The corner viewpoint gives a parallel world for a substantial part of the
third-order mock-theta family.  The parallel is exact at the level of the
finite rank kernels: the quadratic core of the mock-theta series is replaced
by a linear corner peak, and Ramanujan's Lost Notebook supplies the resulting
partial- and false-theta evaluations.  The exceptional identity
$C_\psi(q)=q\omega_3(q)$ shows that the passage does not invariably leave the
mock-theta world.

The positive decompositions in
Theorems~\ref{thm:Ptdo-positive-decomposition} and
\ref{thm:Pthdo-positive-decomposition} rewrite the two colored companions as
\[
 C_\omega(q)+E_\triangle(q)
 \qquad\text{and}\qquad
 2C_{3,o}(q)+E_{3,\triangle}(q),
\]
respectively.  Consequently, both sparse corrections are signed differences
between natural corner families.  The correction terms in
Theorems~\ref{thm:Co-Ptdo} and \ref{thm:Cto-Pthdo} are also governed by one
boxed modular mechanism.
Burson's involutions cancel all nonfixed overpartition pairs, and the fixed
pairs \eqref{eq:burson-fixed-pairs} account for the two quadratic families in
the two-color identity and the single quadratic family in the three-color
identity.  Corollaries~\ref{cor:two-color-boxed} and
\ref{cor:three-color-boxed} therefore give auxiliary involutive models for
the sparse errors.  The remaining direct problems are now parallel: one seeks an involution
between staircase-decorated even corners and odd corners in the two-color
case, and an involution between staircase-and-tower-decorated even corners
and two labeled copies of the restricted odd three-dimensional corners in
the three-color case.  Such constructions would lift the boxed fixed-point
models back to the original corner objects and explain the factors $2$ and
$4$ without passing through generating functions.

The exact specialization at $z=-1$ and the quartic and cubic identities
in Theorems~\ref{thm:D-parity-specialization},
\ref{thm:D-parity-bias}, and~\ref{thm:D-cyclotomic-differences} show that the
two-variable three-dimensional series contains exact information not visible
from its specialization at $z=1$.  In particular, the cubic specialization
produces both exact rank equidistribution and a sparse rank difference.
Several questions remain.  Can the two corner-to-corner differences in
Corollaries~\ref{cor:two-color-boxed} and~\ref{cor:three-color-boxed} be
proved by direct sign-reversing involutions, and can the resulting positive
decompositions be realized by explicit bijections from the original colored
companions?  Can kernel-preserving linearization be realized by a
common Bailey-pair, Hecke--Appell, or Appell--Lerch mechanism for the complete
identities rather than only their correction terms?  What further rank
dissections follow from higher roots of unity, and which congruences arise
from cancellation inside $R_M$ rather than merely from support exclusion?
Finally, the corresponding one-arm linearization leads naturally to
fifth-order mock theta functions.  That direction appears sufficiently rich
to merit a separate treatment.


\begin{thebibliography}{99}

\bibitem{Dyson}
F.~J. Dyson,
Some guesses in the theory of partitions,
\emph{Eureka} \textbf{8} (1944), 10--15.

\bibitem{partitions}
G.~E. Andrews,
\emph{The Theory of Partitions}, Encyclopedia of Mathematics and its
Applications, Vol.~2, Addison-Wesley, Reading, MA, 1976; reprinted by
Cambridge University Press, Cambridge, 1998.

\bibitem{A84}
G.~E. Andrews,
Ramanujan's ``lost'' notebook. IV. Stacks and alternating parity in partitions,
\emph{Adv. Math.} \textbf{53} (1984), no.~1, 55--74.


\bibitem{AndrewsCC}
G.~E. Andrews,
Concave and convex compositions,
\emph{Ramanujan J.} \textbf{31} (2013), no.~1--2, 67--82.

\bibitem{AndrewsFour}
G.~E. Andrews, B.~C. Berndt, S.~H. Chan, S.~Kim, and A.~Malik,
Four identities for third-order mock theta functions,
\emph{Nagoya Math. J.} \textbf{239} (2020), 173--204.

\bibitem{AllenOsburn}
K.~Allen and R.~Osburn,
Unimodal sequences and mixed false theta functions,
\emph{Adv. Math.} \textbf{473} (2025), Article 110293.

\bibitem{AB1}
G.~E. Andrews and B.~C. Berndt,
\emph{Ramanujan's Lost Notebook, Part I}, Springer, New York, 2005.

\bibitem{AB2}
G.~E. Andrews and B.~C. Berndt,
\emph{Ramanujan's Lost Notebook, Part II}, Springer, New York, 2009.

\bibitem{AB3}
G.~E. Andrews and B.~C. Berndt,
\emph{Ramanujan's Lost Notebook, Part III}, Springer, New York, 2012.


\bibitem{AN}
G.~E. Andrews and D.~Newman,
The minimal excludant in integer partitions,
\emph{J. Integer Seq.} \textbf{23} (2020), Article 20.2.3.

\bibitem{Au}
F.~C. Auluck,
On some new types of partitions associated with generalized Ferrers graphs,
\emph{Proc. Cambridge Philos. Soc.} \textbf{47} (1951), 679--686.

\bibitem{B}
B.~C. Berndt,
\emph{Ramanujan's Notebooks, Part IV}, Springer, New York, 1994.


\bibitem{BridgesBringmann}
W.~Bridges and K.~Bringmann,
Taylor coefficients of false Jacobi forms and ranks of unimodal sequences,
\emph{Adv. Math.} \textbf{426} (2023), Article 109074.

\bibitem{Burson}
H.~E. Burson,
A bijective proof of a false theta function identity from Ramanujan's lost
notebook,
\emph{Ann. Comb.} \textbf{23} (2019), no.~3--4, 579--588.

\bibitem{CorteelLovejoy}
S.~Corteel and J.~Lovejoy,
Overpartitions,
\emph{Trans. Amer. Math. Soc.} \textbf{356} (2004), no.~4, 1623--1635.

\bibitem{BLodd}
K.~Bringmann and J.~Lovejoy,
Odd unimodal sequences,
\emph{Adv. Math.} \textbf{480}, Part B (2025), Article 110458.

\bibitem{BPV}
K.~Bringmann, B.~V. Pandey, and J.-W. van Ittersum,
False and partial Eisenstein-type series related to unimodal sequences,
\emph{Res. Math. Sci.} \textbf{13} (2026), Article 46.


\bibitem{DGLP}
S.~Datta, M.~R. Gaberdiel, W.~Li, and C.~Peng,
Twisted sectors from plane partitions,
\emph{J. High Energy Phys.} 2016, no.~9, Paper No.~138.

\bibitem{KLrank}
B.~Kim and J.~Lovejoy,
The rank of a unimodal sequence and a partial theta identity of Ramanujan,
\emph{Int. J. Number Theory} \textbf{10} (2014), no.~4, 1081--1098.

\bibitem{KLspecial}
B.~Kim and J.~Lovejoy,
Ramanujan-type partial theta identities and rank differences for special
unimodal sequences,
\emph{Ann. Comb.} \textbf{19} (2015), no.~4, 705--733.


\bibitem{K}
G.~Kreweras,
Sur les extensions lin\'eaires d'une famille particuli\`ere d'ordres partiels,
\emph{Discrete Math.} \textbf{27} (1979), 279--295.

\bibitem{O}
OEIS Foundation Inc.,
The On-Line Encyclopedia of Integer Sequences, A006330.

\bibitem{S}
R.~P. Stanley,
\emph{Enumerative Combinatorics, Vol.~1}, Cambridge University Press,
Cambridge, 1999.

\bibitem{T}
H.~N.~V. Temperley,
Statistical mechanics and the partition of numbers II. The form of crystal surfaces,
\emph{Proc. Cambridge Philos. Soc.} \textbf{48} (1952), 683--697.

\bibitem{FMR}
A.~Folsom, J.~Males, and L.~Rolen,
Equidistribution and partition polynomials,
\emph{Ramanujan J.} \textbf{65} (2024), 1827--1848.

\bibitem{GarvanRank}
F.~G. Garvan,
Transformation properties for Dyson's rank function,
\emph{Trans. Amer. Math. Soc.} \textbf{371} (2019), 199--248.

\bibitem{Watson}
G.~N. Watson,
The final problem: an account of the mock theta functions,
\emph{J. London Math. Soc.} \textbf{11} (1936), 55--80.

\bibitem{WrightStacks}
E.~M. Wright,
Stacks,
\emph{Quart. J. Math. Oxford Ser. (2)} \textbf{19} (1968), 313--320.

\enlargethispage{6\baselineskip}
\end{thebibliography}
\end{document}